\title{The moduli space of regular stable maps\footnote{The authors 
would like to thank the referee for his/her diligent work. 
We are grateful for the careful attention to detail in the report.}}
\author{Joel W. Robbin\\
        University of Wisconsin
        \and
          Yongbin Ruan\\
        University of Wisconsin
        \and
        Dietmar~A.~Salamon\\
        ETH-Z\"urich}
\date{23 August 2007}
\newtheorem{PARA}{}[section]
\newtheorem{theorem}[PARA]{Theorem}
\newtheorem{corollary}[PARA]{Corollary}
\newtheorem{lemma}[PARA]{Lemma}
\newtheorem{definition}[PARA]{Definition}
\newtheorem{remark}[PARA]{Remark}
\newtheorem{example}[PARA]{Example}
\newcommand{\para}{\begin{PARA}\rm}
\newcommand{\arap}{\end{PARA}\rm}
\newcommand{\dfn}{\begin{definition}\rm}
\newcommand{\nfd}{\end{definition}\rm}
\newcommand{\rmk}{\begin{remark}\rm}
\newcommand{\kmr}{\end{remark}\rm}
\newcommand{\xmpl}{\begin{example}\rm}
\newcommand{\lpmx}{\end{example}\rm}
\newcommand{\jdef}[1]{{\bf #1}}
\newcommand{\Times}[2]{{}_{#1}\!\!\times_{#2}}
\newcommand{\cB}{\mathcal{B}}
\newcommand{\cD}{\mathcal{D}}
\newcommand{\cF}{\mathcal{F}}
\newcommand{\cE}{\mathcal{E}}
\newcommand{\cG}{\mathcal{G}}
\newcommand{\cH}{\mathcal{H}}
\newcommand{\cI}{\mathcal{I}}
\newcommand{\cJ}{\mathcal{J}}
\newcommand{\cL}{\mathcal{L}}
\newcommand{\cM}{\mathcal{M}}
\newcommand{\cMbar}{\overline{\mathcal{M}}}
\newcommand{\cN}{\mathcal{N}}
\newcommand{\cP}{\mathcal{P}}
\newcommand{\cQ}{\mathcal{Q}}
\newcommand{\cS}{\mathcal{S}}
\newcommand{\cT}{\mathcal{T}}
\newcommand{\cU}{\mathcal{U}}
\newcommand{\cV}{\mathcal{V}}
\newcommand{\cW}{\mathcal{W}}
\newcommand{\cX}{\mathcal{X}}
\newcommand{\cY}{\mathcal{Y}}
\newcommand{\cZ}{\mathcal{Z}}
\newcommand{\Rectangle}[8]{
\begin{array}{ccc}
{#1} & \mapright{#2} & {#3}\\
\mapdown{#4} && \mapdown{#5} \\
{#6} & \mapright{#7} & {#8}
\end{array}
}
\newcommand{\mapdown}[1]{\Big\downarrow
\rlap{$\vcenter{\hbox{$\scriptstyle#1$}}$}}
\newcommand{\mapright}[1]{\smash{
\mathop{\longrightarrow}\limits^{#1}}}
\newcommand{\dbar}{{\overline\partial}}
\newcommand{\one}{{{\mathchoice \mathrm{ 1\mskip-4mu l} \mathrm{ 1\mskip-4mu l}
\mathrm{ 1\mskip-4.5mu l} \mathrm{ 1\mskip-5mu l}}}}
\newcommand{\A}{{\mathbb{A}}}
\newcommand{\C}{{\mathbb{C}}}
\newcommand{\D}{{\mathbb{D}}}
\renewcommand{\H}{{\mathbb{H}}}
\newcommand{\R}{{\mathbb{R}}}
\newcommand{\T}{{\mathbb{T}}}
\newcommand{\Z}{{\mathbb{Z}}}
\newcommand{\sa}{{\mathsf{a}}}
\renewcommand{\sb}{{\mathsf{b}}}
\newcommand{\sd}{{\mathsf{d}}}
\newcommand{\sg}{{\mathsf{g}}}
\newcommand{\si}{{\mathsf{i}}}
\newcommand{\sk}{{\mathsf{k}}}
\newcommand{\sm}{{\mathsf{m}}}
\newcommand{\sn}{{\mathsf{n}}}
\newcommand{\coker}{\mathrm{ coker }}  
\newcommand{\im}{\mathrm{ im }}        
\newcommand{\id}{\mathrm{ id}}         
\newcommand{\INT}{\mathrm{ int}}       
\newcommand{\INDEX}{\mathrm{ index}}   
\newcommand{\G}{\mathrm{G}}
\newcommand{\PSL}{\mathrm{PSL}}
\newcommand{\Lie}{\mathrm{ Lie}}          
\newcommand{\Diff}{\mathrm{ Diff}}        
\newcommand{\Vect}{\mathrm{ Vect}}        
\newcommand{\Hol}{\mathrm{ Hol}}          
\newcommand{\End}{\mathrm{ End}}          
\newcommand{\eps}{{\varepsilon}}
\newcommand{\Cinf}{C^{\infty}}
\newcommand{\reg}{\mathrm{ reg}}
\newcommand{\jhat}{{\hat{\jmath}}}
\newcommand{\inner}[2]{\left\langle #1, #2\right\rangle}
\def\NABLA#1{{\mathop{\nabla\kern-.5ex\lower1ex\hbox{$#1$}}}}
\def\Nabla#1{\nabla\kern-.5ex{}_{#1}}
\def\Tabla#1{\Tilde\nabla\kern-.5ex{}_{#1}}
\def\Abs#1{\left|#1\right|}
\def\Norm#1{\left\|#1\right\|}
\renewcommand{\Tilde}{\widetilde}
\newcommand{\half}{\mbox{$\frac12$}}
\newcommand{\p}{{\partial}}
\begin{document}

\maketitle


\section{Introduction}

This paper is a sequel to~\cite{RS}. It studies
the moduli space of stable maps whereas~\cite{RS}
studied the moduli space of stable marked nodal Riemann surfaces.
The latter can be considered as a special case of the former
by taking the target manifold $M$ to be a point. In both cases
the moduli space is the orbit space of a groupoid where
the objects are compact surfaces with additional structure.
(We think of a map from a surface to another
manifold as a structure on the surface.)
In both cases the difficulty is that to achieve compactness of
this moduli space it is necessary to include objects
whose underlying surfaces are not homeomorphic.

Here we study only that part of the moduli space
of stable maps which can be represented by
regular stable maps. Only by restricting attention
to regular stable maps can we hope to construct
an orbifold structure. We also limit attention
to target manifolds $M$ which are integrable
complex and not just almost complex.

As  in~\cite{RS} we make heavy use of ``Hardy decompositions''.
The idea is to decompose a Riemann surface $\Sigma$
into two surfaces $\Sigma'$ and $\Sigma''$ 
intersecting in their common boundary $\Gamma$.
A holomorphic map from $\Sigma$ into a complex manifold $M$ 
is uniquely determined by its restriction to $\Gamma$ and so the
space of all such holomorphic maps can be embedded 
into the space $\cV$ of smooth maps from $\Gamma$  to $M$.
In this way we identify the holomorphic  maps
with $\cV'\cap\cV''$ where $\cV'$ and $\cV''$ are the maps
 from $\Gamma$ to $M$ which extend holomorphically
 to $\Sigma'$ and $\Sigma''$ respectively.  
 (In the case where $\Sigma$ is the
 Riemann sphere, $M=\C\cup\{\infty\}$, 
 and $\Gamma$ is the equator,
 $\cV'$ would consist of those maps whose negative
 Fourier coefficients vanish and
$\cV''$ would consist of those maps whose positive
Fourier coefficients vanish. Hence the name
{\em Hardy decomposition}.)
The importance of this construction becomes
clear when we  consider a parameterized family
$\{\Sigma_b\}_{b\in B}$ of Riemann surfaces.
By judiciously choosing the decomposition 
we can arrange that the one dimensional manifolds
$\Gamma_b$ are all diffeomorphic, even though
the manifolds $\Sigma'_b$ are not all homeomorphic.
Then we identify the various $\Gamma_b$ with
a disjoint union $\Gamma$ of circles.
Under suitable hypotheses we are able
to
represent the holomorphic maps 
from $\Sigma_b$ to $M$
(for varying $b$) as a submanifold of
the manifold of smooth maps from 
$\Gamma\cong\p\Sigma'_b=\p\Sigma''_b$ to $M$.

Our theorems led to a theory of Fredholm triples
in Section~\ref{sec:interfred}.
These are triples $(X,X',X'')$ where $X$ is a Hilbert
manifold and $X'$, $X''$ are Hilbert submanifolds 
such that $T_xX'\cap T_xX''$ and $T_xX/(T_xX'+T_xX'')$ 
are finite dimensional for every $x\in X'\cap X''$. 
We prove a finite dimensional
reduction theorem for morphisms of such triples.
We hope this theory has separate interest.

In Section~\ref{sec:topology} we show that the orbifold
topology is the same as the well known
topology of Gromov convergence.

Naming the additional structures which occur in this paper
as opposed to~\cite{RS} caused us to exhaust the
Latin and Greek alphabets. Accordingly we
have changed notation somewhat. For example,
the aforementioned decomposition 
$\Sigma=\Sigma'\cup\Sigma''$
was
$\Sigma=\Delta\cup\Omega$
in~\cite{RS}. We also
use the following notations
$$
\begin{array}{lcl}
\sg &:=&\mbox{arithmetic genus of }\Sigma/\nu, \\
\sn &:=&\mbox{number of marked points}, \\
\sk &:=&\mbox{number of nodal points}, \\
\sa &:=&\mbox{complex dimension of } A,\\
\sb &:=&\mbox{complex dimension of } B,\\
\sm &:=&\mbox{complex dimension of } M.\\
\end{array}
$$
We have used the \verb$\mathsf$ font for these integers so that we 
can write $a\in A$, $b\in B$ for the elements.  We will also use the
symbol  $\sd$ to denote a homology class in $H_2(M;\Z)$.


\section{Stable maps}\label{sec:stablemap}

\para
Throughout let $(M,J)$ be a complex manifold without boundary.
A \jdef{configuration} in $M$ is a tuple $(\Sigma,s_*,\nu,j,v)$ where
$(\Sigma,s_*,\nu,j)$ is a marked nodal Riemann surface (see~\cite[\S3]{RS})
whose quotient $\Sigma/\nu$ is connected 
and $v:\Sigma\to M$
is a smooth map satisfying the nodal conditions
$$
\{x,y\}\in\nu\implies v(x)=v(y).
$$
Thus $v$ descends to the quotient $\Sigma/\nu$ and we write 
$v:\Sigma/\nu\to M$ for a smooth map $v:\Sigma\to M$ satisfying 
the nodal conditions. 
We say that the configuration has \jdef{type} $(\sg,\sn)$ 
if the marked nodal surface $(\Sigma,s_*,\nu)$
has type $(\sg,\sn)$ in the sense of \cite[Definition~3.7]{RS}
and that it has  \jdef{type} $(\sg,\sn,\sd)$ 
if in addition the map $v$  
sends the fundamental class of $\Sigma$ to the homology
class $\sd\in H_2(M;\Z)$.
The configurations form the objects of a groupoid;
an isomorphism
$$
\phi:(\Sigma',s'_*,\nu',j',v')\to(\Sigma,s_*,\nu,j,v)
$$
is an isomorphism $\phi:\Sigma'\to\Sigma$ of the underlying
marked nodal Riemann surfaces such that
$$
v' = v\circ\phi.
$$
Given two nonnegative integers $\sg$ and $\sn$ and a homology
class $\sd\in H_2(M;\Z)$ we denote by
$\cB_{\sg,\sn}(M,J)$   the groupoid of configurations of type $(\sg,\sn)$ 
and by $\cB_{\sg,\sn,\sd}(M,J)$ the  subgroupoid of configurations of type 
$(\sg,\sn,\sd)$.
\arap

\para
The configuration $(\Sigma,s_*,\nu,j,v)$ is called \jdef{holomorphic} if the
map $v$ is holomorphic, i.e.  if
$$
\bar\p_{j,J}(v):=\half\left(dv + J(v) dv\circ j\right)=0.
$$
A \jdef{stable map} is a holomorphic configuration   
whose automorphism group is finite. 
This means that each genus-0 component of $\Sigma$ 
on which $v$ is constant carries at least three special points 
and each genus-1 component of $\Sigma$ 
on which $v$ is constant carries at least one special point. 
A component on which $v$ is constant is commonly
called a {\em ghost component} so  a stable map
is a holomorphic configuration such that each ghost component
is stable in the sense of \cite[Definition~3.7]{RS}.
The stable maps of type $(\sg,\sn)$ are a subgroupoid of  $\cB_{\sg,\sn}(M,J)$;
the orbit space  $\cMbar_{\sg,\sn}$ of this subgroupoid is
(set theoretically) the \jdef{moduli space} of stable maps of type $(\sg,\sn)$.
Similarly define the subset $\cMbar_{\sg,\sn,\sd}$.
Our goal is to construct
a canonical orbifold structure on the regular part
of this space.
\arap

\dfn \label{def:regular}
A holomorphic configuration $(\Sigma,s_*,\nu,j,v)$ is called 
\jdef{regular} if
\begin{equation}\label{eq:regular}
\Omega^{0,1}_j(\Sigma,v^*TM)
= \im\,D_v + dv\cdot\Omega^{0,1}_j(\Sigma,T\Sigma)
\end{equation}
where 
$$
D_v:\Omega^0(\Sigma/\nu,v^*TM)\to\Omega^{0,1}_j(\Sigma,v^*TM)
$$
is the linearized Cauchy Riemann operator (see~\cite[page~41]{MS} and~\ref{Dv} below).  
\nfd

\para\label{regular}    Fix $\nu$ and $s_*$.
Let  $\cJ(\Sigma)\subset\End(T\Sigma)$ 
denote  the manifold of complex structures on $\Sigma$ and let
$$
\cB:=\cJ(\Sigma)\times  \Cinf(\Sigma/\nu,M).
$$
Form the  vector bundle $\cE\to\cB$ with  fiber  
$$
\cE_{j,v}:=\Omega^{0,1}_j(\Sigma,v^*TM)
$$
and  let  $\cS:\cB\to\cE$ denote the section defined by
the nonlinear Cauchy--Riemann operator
$$
\cS(j,v):=\bar\p_{j,J}(v). 
$$
A configuration $(j,v)$   is holomorphic   and only if  $\cS(j,v)=0$.
The intrinsic derivative of $\cS$ at a zero $(j,v)\in\cS^{-1}(0)$  
is the operator 
$
\cD_{j,v}:T_{j,v}\cB\to\cE_{j,v}
$
given by
$$
\cD_{j,v}(\jhat,\hat{v})=D_v\hat{v}+\half J(v)\, dv\cdot \jhat.
$$ 
{\em A holomorphic configuration $(j,v)$ 
is regular if and only if the operator $\cD_{j,v}$ is surjective.}
This follows from the following three assertions:
(1)~the tangent space to $\cB$ at $(j,v)$ is
$$
T_{j,v}\cB=
\Omega^{0,1}_j(\Sigma,T\Sigma)\times \Omega^0(\Sigma,v^*TM)
$$
(2)~When $v$ is holomorphic, we have $J(v)\, dv\cdot \jhat=dv\cdot j\jhat$.
(3)~The map 
$$
\Omega^{0,1}_j(\Sigma,T\Sigma)\to \Omega^{0,1}_j(\Sigma,T\Sigma):
\jhat\mapsto j\jhat
$$
is bijective. 
Hence, for a regular holomorphic configuration,
the zero set of  $\cS$ is a Fr\'echet manifold near $(j,v)$ 
with tangent space $\ker\,\cD_{j,v}$. 
This zero set is the ``stratum'' consisting of the holomorphic
configurations of type $(\sg,\sn)$ obtained by fixing $\nu$ and varying $(j,v)$.
Fixing $j$ gives the vector bundle over $\Cinf(\Sigma/\nu,M)$ with fibers
$\Omega^{0,1}_j(\Sigma,v^*TM)$.
When the configuration $(j,v)$ is holomorphic,
the operator $D_v$ is the intrinsic derivative of 
the section $v\mapsto\cS(j,v)$. 
\arap

\para\label{Diff}
The section $(j,v)\mapsto\cS(j,v)=\bar\p_{j,J}(v)$ is equivariant 
under the action of the group $\Diff(\Sigma,\nu)$ 
of orientation preserving diffeomorphisms that preserve the 
nodal structure.  The Lie algebra of $\Diff(\Sigma,\nu)$ is the 
space 
$$
\Vect(\Sigma,\nu)
:=\{\xi\in\Omega^0(\Sigma,T\Sigma)\,|\,\xi(z)=0\,\forall z\in\cup\nu\}
$$
of vector fields on $\Sigma$ that vanish on the nodal set. 
The infinitesimal equivariance condition is
\begin{equation}\label{eq:Ddbar}
D_v(dv\cdot\xi) = dv\cdot\bar\p_j\xi
\end{equation}
for every $\xi\in\Vect(\Sigma,\nu)$. 
The diffeomorphism group $\Diff(\Sigma,\nu)$ acts on 
the space 
$$
\cZ_\sn(\Sigma,\nu;M,J):= (\Sigma^\sn\setminus\Delta)\times\cS^{-1}(0)
$$ 
(where $\Delta$ is the fat diagonal) by 
\begin{equation}\label{eq:Diff}
g^*(s_1,\dots,s_\sn,j,v)
:=(g^{-1}(s_1),\dots,g^{-1}(s_\sn),g^*j,v\circ g)
\end{equation}
for $g\in\Diff(\Sigma,\nu)$. Let $\cP_\sn(\Sigma,\nu;M,J)\subset \cZ_\sn(\Sigma,\nu;M,J)$
denote the subset of stable maps, i.e.\ the subset where $\Diff(\Sigma,\nu)$
acts with finite isotropy.
Then the quotient space
$$
\cM_\sn(\Sigma,\nu;M,J) 
:= \cP_\sn(\Sigma,\nu;M,J)/\Diff(\Sigma,\nu) 
$$
is a stratum of the moduli space $\bar\cM_{\sg,\sn}(M,J)$ of all
stable maps of genus $\sg$ with $\sn$ marked points.
The stratum can also be expressed as the quotient
$
\cM_\sn(\Sigma,\nu;M,J) 
= \cS^{-1}(0)_\mathrm{stable}/\Diff(\Sigma,\nu,s_*)
$
where $\Diff(\Sigma,\nu,s_*)\subset\Diff(\Sigma,\nu)$ 
denotes the subgroup of all diffeomorphisms $\phi\in\Diff(\Sigma,\nu)$
that satisfy $\phi(s_\si)=s_\si$ for $\si=1,\dots,\sn$. 
\arap

\para\label{Dv}
Let $(\Sigma,\nu,j)$ be a nodal Riemann surface 
and $v:\Sigma\to M$ be a smooth map. 
Fix a connection on $TM$ and
define 
\begin{equation}\label{eq:Dv}
D_v\hat{v}
:=\half\left(\nabla\hat{v}+J(v)\nabla\hat{v}\circ j\right)
- \half J(v)\nabla_{\hat{v}}J(v)\p_{j,J}(v).
\end{equation}
(See~\cite[page~41]{MS}.)  
The definition for $D_v$ is meaningful even when 
$J$ is not integrable.
If $\bar\p_{j,J}(v)=0$,  then 
the right hand side of~(\ref{eq:Dv}) 
is independent of the choice of
the connection $\nabla$
and is the operator of Definition~\ref{def:regular}. 
If $J$ is integrable, 
$v^*TM\to\Sigma$ is a holomorphic vector bundle
and $D_v$ is its Cauchy Riemann operator.
If $\nabla$ is the Levi Civita 
connection of a K\"ahler metric, then  $\nabla J=0$ and the 
last term vanishes. In general (assuming neither integrability nor
that $(j,v)$ is a zero) the  formula for $D_v$ still defines
a Cauchy--Riemann operator on $v^*TM$ which depends
however on the connection and might not be complex linear,
but it is always Fredholm. 
\arap


\section{Unfoldings of stable maps}\label{sec:unfolding}

\para\label{family}
Fix two nonnegative integers $\sg$ and $\sn$ 
and a homology class $\sd\in H_2(M;\Z)$. 
A \jdef{(holomorphic) family of maps 
(of type $(\sg,\sn,\sd)$)}
is a triple 
$$
(\pi:Q\to B,S_*,H)
$$
where $(\pi,S_*)$ is a marked nodal Riemann family 
(of type $(\sg,\sn)$) and 
$$
H:Q\to M
$$ 
is a holomorphic map such that the restriction of $H$ 
to each fiber $Q_b$ represents the homology class $\sd$. 
A desingularization $u:\Sigma\to Q_b$ of a fiber induces 
a holomorphic configuration $(\Sigma,s_*,\nu,j,v)$  with 
$$
v:=H\circ u.
$$
The family of maps is called \jdef{stable} if each configuration
that arises from a desingularization of a fiber is a stable map. 
Given two families of maps ${(\pi_A:P\to A,R_*,H_A)}$
and ${(\pi_B:Q\to B,S_*,H_B)}$ a map $f:P_a\to Q_b$ is called 
a \jdef{fiber isomorphism} if it is a fiber isomorphism 
of marked nodal Riemann families and 
$$
H_A|P_a = H_B\circ f.
$$
A \jdef{morphism} between two families of maps
$(\pi_A,R_*,H_A)$ and $(\pi_B,S_*,H_B)$ is a 
commutative diagram 
$$
\xymatrix{
 &&&& M\\
P\ar[rrr]^\Phi\ar[d]_{\pi_A}\ar[rrrru]^{H_A} &&& Q\ar[d]^{\pi_B}\ar[ru]_{H_B} \\
A\ar[rrr]^\phi &&& B \\
}
$$
such that, for each $a\in A$, the restriction of $\Phi$ 
to the fiber $P_a$ is  a fiber isomorphism.  
The morphism is called continuous, continuously differentiable,
smooth, or holomorphic if both maps $\phi$ and $\Phi$ are. 
\arap

\dfn\label{def:univ}
An \jdef{unfolding of maps} is a quadruple $(\pi_B,S_*,H_B,b)$ 
where $(\pi_B,S_*,H_B)$ is a family of maps and $b\in B$. 
An unfolding $(\pi_B,S_*,H_B,b)$ is called \jdef{universal}
if, for every other unfolding $(\pi_A,R_*,H_A,a)$ 
and every fiber isomorphism ${f:P_a\to Q_b}$, 
there is a unique morphism 
$$
(\phi,\Phi):(\pi_A,R_*,H_A,a)\to(\pi_B,S_*,H_B,b)
$$ 
of families of maps such that 
$$
\Phi|P_a=f.
$$
This is to be understood in the sense of germs; the morphism 
may only be defined after shrinking $A$, and two morphisms
are considered equal if they agree on some neighborhood of
$P_a$.  
\nfd

\dfn\label{def:infuniv}
Let $(\pi:Q\to B,S_*,H,b)$ be an unfolding of maps and 
${u:\Sigma\to Q_b}$ be a desingularization with induced 
structures $s_*$, $\nu$, $j$, and $v$ on $\Sigma$
Define the spaces 
$$
\cX_u:=\left\{\hat{u}\in\Omega^0(\Sigma/\nu,u^*TQ)\,|\,
d\pi(u)\hat u\equiv\mathrm{constant},\;
     \hat u(s_\si)\in T_{u(s_\si)}S_\si
\right\},
$$
$$
\cY_u:=\{\eta\in\Omega^{0,1}_j(\Sigma,u^*TQ)\,|\,d\pi(u)\eta=0\},
$$
$$
\cX_v := \Omega^0(\Sigma/\nu,v^*TM),\qquad
\cY_v := \Omega^{0,1}_j(\Sigma,v^*TM).
$$
Consider the diagram 
\begin{equation}\label{eq:XY}
\xymatrix{
{\cX_u}\ar[r]^{dH(u)}\ar[d]_{D_u}&{\cX_v}\ar[d]^{D_v} \\
{\cY_u}\ar[r]^{dH(u)}&{\cY_v} 
}
\end{equation}
where the vertical maps are the restrictions 
to the indicated subspaces
of the linearized Cauchy--Riemann operators (see~\ref{Dv})
$$
D_u:\Omega^0(\Sigma,u^*TQ)\to\Omega^{0,1}(\Sigma,u^*TQ),
$$  
$$
D_v:\Omega^0(\Sigma,v^*TM)\to\Omega^{0,1}(\Sigma,v^*TM)
$$ 
associated  to the holomorphic maps $u$ and $v$.
Thus 
$D_v$ 
is the intrinsic derivative in~\ref{def:regular}.
The diagram~(\ref{eq:XY}) commutes because $H$ is 
holomorphic and hence 
$\bar\p_{j,J_M}(H\circ u)=dH(u)\cdot\bar\p_{j,J_Q}(u)$.
The commutative diagram~(\ref{eq:XY}) determines maps
\begin{equation}\label{eq:dH}
dH(u):\ker D_u\to \ker D_v,\qquad 
dH(u):\coker D_u\to \coker D_v
\end{equation}
The unfolding is called \jdef{infinitesimally universal} if the maps
in~(\ref{eq:dH}) are both bijective.
\nfd

\rmk\label{rmk:regular}
{\em Let  $(\Sigma,s_*,\nu,j,v)$ be induced by a desingularization 
$u:\Sigma\to Q_b$ of an unfolding $(\pi:Q\to B,S_*,H,b)$. 
Then $(\Sigma,s_*,\nu,j,v)$ is regular if and only if 
the map $dH(u):\coker D_u\to \coker D_v$ is surjective.}
To see this note that $dH(u):\coker D_u\to \coker D_v$ is surjective
if and only if 
\begin{equation}\label{eq:regular1}
\cY_v = \im\, D_v + \im\,(dH(u):\cY_u\to\cY_v).
\end{equation}
Since $u$ is an immersion, the map
$$
T_j\cJ(\Sigma)=\Omega^{0,1}_j(\Sigma,T\Sigma)\to \cY_u:
\eta\mapsto du\cdot \eta
$$
is an isomorphism. But $v=H\circ u$ so 
$dv\cdot\eta=dH(u)\circ du\cdot\eta$ so
$$
dv\cdot\Omega^{0,1}_j(\Sigma,T\Sigma)=\im\,(dH(u):\cY_u\to\cY_v).
$$
Hence equation~(\ref{eq:regular}) is equivalent to 
equation~(\ref{eq:regular1}) which asserts that the 
holomorphic configuration $(\Sigma,s_*,\nu,j,v)$ is regular.
\kmr

When $M$ is a point the above definitions and the following theorems 
agree with the corresponding ones in~\cite{RS}.  

\begin{theorem}\label{thm:exists} 
A holomorphic configuration $(\Sigma,s_*,\nu,j,v)$ admits 
an infinitesimally universal unfolding if and only if 
it is a regular stable map.
\end{theorem}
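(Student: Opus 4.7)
The plan is to treat the two directions separately, the necessity being essentially formal via Remark~\ref{rmk:regular} and the equivariance~\eqref{eq:Ddbar}, and the sufficiency relying on a Hardy decomposition together with the finite-dimensional reduction for Fredholm triples from Section~\ref{sec:interfred}. For the ``only if'' direction, let $(\pi_B,S_*,H_B,b)$ be infinitesimally universal and let $u:\Sigma\to Q_b$ be a desingularization inducing the data $(\Sigma,s_*,\nu,j,v)$. Regularity follows immediately from Remark~\ref{rmk:regular} applied to the surjectivity of $dH(u):\coker D_u\to\coker D_v$. For finiteness of the automorphism group it suffices to rule out a nonzero infinitesimal automorphism, that is, a vector field $\xi\in\Vect(\Sigma,\nu)$ with $\xi(s_\si)=0$, $\bar\p_j\xi=0$, and $dv\cdot\xi=0$. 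Given such a $\xi$, set $\hat u:=du\cdot\xi$; then $\hat u\in\cX_u$ because $\pi\circ u$ is constant on $\Sigma$ and $\hat u(s_\si)=0$, and $D_u\hat u=du\cdot\bar\p_j\xi=0$ by~\eqref{eq:Ddbar}, so $\hat u\in\ker D_u$. Its image $dH(u)\hat u=dv\cdot\xi$ vanishes, yet $\hat u\ne 0$ since $u$ is an immersion away from the nodes, contradicting the injectivity of $dH(u):\ker D_u\to\ker D_v$; hence $\Lie\Aut=0$ and $\Aut$ is finite.

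For the ``if'' direction, assume $(\Sigma,s_*,\nu,j,v)$ is a regular stable map. The underlying marked nodal Riemann surface need not be stable in the sense of~\cite{RS}, since non-ghost components may carry too few special points, although every ghost component is stable by the stable-map hypothesis. I would first stabilize by choosing auxiliary points $p_\alpha\in\Sigma$ on the offending non-ghost components together with local codimension-two complex submanifolds $Y_\alpha\subset M$ through $v(p_\alpha)$ transverse to $v$, so that $(\Sigma,s_*\cup p_*,\nu,j)$ becomes a stable marked nodal Riemann surface. The universal unfolding theorem of~\cite{RS} then supplies a universal unfolding $(\pi:Q\to B^\sharp,S_*\cup P_*,b)$ of this enlargement, and it remains to construct a holomorphic extension $H:Q\to M$ of $v$ satisfying $H(P_\alpha)\subset Y_\alpha$ and to cut out the base $B\subset B^\sharp$ along which such an extension exists.

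This last step is the heart of the matter, and I would handle it via a Hardy decomposition in the style of the introduction. Cutting $\Sigma$, and compatibly the fibers $Q_b$ for $b$ near the basepoint, along a union $\Gamma$ of circles disjoint from the special and nodal points identifies holomorphic extensions $Q_b\to M$ with boundary traces in $\cV'_b\cap\cV''_b\subset\Cinf(\Gamma,M)$, where $\cV'_b,\cV''_b$ are the subsets of traces admitting holomorphic extensions to the two halves. The regularity hypothesis~\eqref{eq:regular} translates precisely into transversality of the Fredholm triple $(\Cinf(\Gamma,M),\cV'_b,\cV''_b)$ at $v|_\Gamma$, and together with the transverse slice conditions imposed by the $Y_\alpha$ the finite-dimensional reduction theorem of Section~\ref{sec:interfred} produces $B$ and $H$ simultaneously. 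Infinitesimal universality of $(\pi|_B,S_*,H,b)$ then follows almost tautologically: surjectivity on cokernels is regularity, injectivity on kernels is the vanishing of $\Lie\Aut$ coming from stability, and the transverse incidence conditions reduce the remaining bijections to a dimension count. The main obstacle I anticipate is precisely this third step: converting the infinite-dimensional surjectivity~\eqref{eq:regular} into a uniform-in-$b$ finite-dimensional slice requires the full Hardy-decomposition and Fredholm-triple machinery, rather than a naive implicit function argument on the Fr\'echet manifold $\cS^{-1}(0)$ of paragraph~\ref{regular}.
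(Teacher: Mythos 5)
Your ``only if'' argument is correct and is in fact cleaner than the paper's own: you derive a contradiction directly with the injectivity of $dH(u):\ker D_u\to\ker D_v$ by feeding in $\hat u=du\cdot\xi$ for a nonzero infinitesimal automorphism $\xi$, using the equivariance identity~(\ref{eq:Ddbar}). The paper instead shows that a non-stable configuration has no \emph{universal} unfolding (by exhibiting two distinct extensions of a fiber isomorphism through the $\C^*$- or $\PSL(2,\C)$- or torus-action), which is a statement about Definition~\ref{def:univ} rather than Definition~\ref{def:infuniv} and implicitly leans on Theorem~\ref{thm:universal}. Your version stays at the infinitesimal level throughout, which matches the statement being proved.

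For the ``if'' direction, your high-level plan --- reduce to the case where the underlying marked nodal surface is stable, then build the unfolding from a universal unfolding $(\pi_A,R_*,a_0)$ of that surface via a Hardy decomposition, taking $B=\cV'\cap\cV''$ and extracting $Q$ and $H$ from the trace spaces --- is precisely what the paper does in Section~\ref{sec:proof}, backed by Theorems~\ref{thm:V} and~\ref{thm:transverse} and the Fredholm-triple machinery. Where you diverge is the descent from the stabilized configuration back to the original one. You propose to impose incidence conditions $H(P_\alpha)\subset Y_\alpha$ with local divisors $Y_\alpha$ transverse to $v$, and to cut the base down accordingly. The paper instead proves a ``Claim'' by backwards induction: if a stable map admits an infinitesimally universal unfolding and the configuration obtained by deleting a marked point is still a stable map, then it too admits one. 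The Claim is proved by a careful linear-algebra argument at the level of $D_u$: one enlarges the domain to $\Tilde{\cX}_u$ (dropping the constraint at $s_\sn$), shows $\im\Tilde D_u=\im D_u$ and $\ker D_u\subset\ker\Tilde D_u$ has codimension one, produces a specific $\hat u\in\ker\Tilde D_u\cap\ker dH(u)\setminus\ker D_u$ with $\hat b:=d\pi(u)\hat u\neq 0$ (this is where stability enters), and then chooses a codimension-one $B'\subset B$ with $\hat b\notin T_{b_0}B'$. Your $Y_\alpha$ route is a legitimate alternative in spirit --- it is the standard incidence-condition trick --- but as written it is a gap: you do not verify that the locus $\{H(P_\alpha)\subset Y_\alpha\}$ is a smooth complex submanifold of the correct codimension, nor that restricting to it and then forgetting the extra sections $P_\alpha$ yields bijections $\ker D_u\to\ker D_v$ and $\coker D_u\to\coker D_v$. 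In the paper's Claim, the condition $\hat b\notin T_{b_0}B'$ is exactly what forces the kernel map to remain an isomorphism after cutting; your ``dimension count'' phrase would have to be replaced by an argument establishing the analogous transversality for the incidence cut, and without that the descent step is not proved.

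A smaller point: the codimension bookkeeping for the $Y_\alpha$ needs care. Adding one marked point raises $\dim_\C B$ of the expected universal family by one, so each $Y_\alpha$ must be of complex codimension one in $M$ (real codimension two), and the transversality requirement is $T_{v(p_\alpha)}M = T_{v(p_\alpha)}Y_\alpha + \im\, dv(p_\alpha)$; this has to be checked to propagate to nearby fibers. The paper sidesteps all of this by never introducing $Y_\alpha$ at all.
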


\begin{proof}
The hard part of the proof is to show that `if' holds under the 
additional assumption that the underlying marked nodal Riemann 
surface $(\Sigma,s_*,\nu,j)$ is stable. We will prove this
in Section~\ref{sec:proof}.  Here we give the easy parts 
of the proof.  

We prove `if' (assuming the aforementioned result of 
Section~\ref{sec:proof}).  By adding marked points in the 
appropriate components we may construct a stable map
whose underlying marked nodal Riemann 
surface is stable.  Hence, by backwards induction, it 
is enough to prove the following

\medskip
\noindent{\bf Claim.}  {\it If a stable map admits an
infinitesimally universal unfolding and the configuration
which results on deleting a marked point is also a stable map, 
then it too admits an infinitesimally universal unfolding.}
\medskip

To prove the claim let $(\pi:Q\to B,S_1,\dots,S_\sn,H,b_0)$ be an infinitesimally 
universal unfolding of $(\Sigma,s_1,\dots,s_\sn,\nu,j,v)$ with 
associated desingularization $u:\Sigma\to Q_{b_0}$
and assume that $(\Sigma,s_1,\dots,s_{\sn-1},\nu,j,v)$ is still stable.
We will construct an infinitesimally universal unfolding
$
(\pi:Q'\to B',S_1',\ldots,S_{\sn-1}',H',b_0)
$
such that $B'$ is a submanifold of $B$, 
$Q':=\pi^{-1}(B')$ is a submanifold of $Q$, $H':=H|Q'$,
and $S_\si'=S_\si\cap Q'$ for $\si=1,\ldots,\sn-1$. 
Define the space 
$$
\Tilde{\cX}_u:=\left\{\hat{u}\in\Omega^0(\Sigma/\nu,u^*TQ)\,|\,
d\pi(u)\hat u\equiv\mathrm{constant},\,
\hat u(s_\si)\in T_{u(s_\si)}S_\si
\mbox{ for }\si<\sn\right\}.
$$
Note that $\Tilde{\cX}_u$ is obtained from $\cX_u$ by removing the constraint
on the value $\hat u(s_\sn)$ at the last marked point. Thus $\cX_u$ is a 
subspace of $\Tilde{\cX}_u$ of complex codimension one; 
a complement of $\cX_u$ in $\Tilde{\cX}_u$ is spanned 
by any vertical vector field along $u$, satisfying the nodal condition,  
that vanishes at the marked points $s_\si$ for $\si<1$ 
and does not vanish at $s_\sn$. Denote by 
$$
\Tilde{D}_u:\Tilde{\cX}_u\to\cY_u
$$ 
the operator given by the same formula as $D_u$ on the larger domain.  
Note that the diagram~(\ref{eq:XY}) continues to commute when 
we replace $\cX_u$ and $D_u$ by $\Tilde{\cX}_u$ and $\Tilde{D}_u$,
respectively. We prove the following.
\begin{description}
\item[(a)]
$\mathrm{im}\,D_u=\mathrm{im}\,\Tilde{D}_u$ 
and $\ker D_u\subset\ker\Tilde{D}_u$ is a 
subspace of codimension one. 
\item[(b)]
There is an element $\hat u\in\ker\Tilde{D}_u$ with 
$dH(u)\hat u\equiv 0$ and $\hat b:=d\pi(u)\hat u\ne 0$. 
\end{description}
With this understood we choose a complex submanifold
$B'\subset B$ of codimension one such that 
$\pi$ is tranverse to $B'$ and $\hat b\notin T_{b_0}B'$. 
Then the kernel of the resulting operator $D_u'$ is a complex 
subspace of the kernel of $\Tilde{D}_u$ of codimension one.
Since $\hat b\notin T_{b_0}B'$, the kernel of $D_u'$
is mapped under $dH(u)$ isomorphically onto the kernel
of $D_v$.  Since $D_u'$ has the same image as $\Tilde{D}_u$ 
and $D_u$ we deduce that $dH(u)$ also induces an 
isomorphism from the cokernel of $D'_u$ to that of $D_v$. 
Hence $(\pi:Q'\to B',S_1',\dots,S_{\sn-1}',H',b_0)$ is an
infinitesimally universal unfolding 
of $(\Sigma,s_1,\dots,s_{\sn-1},\nu,j,v)$
as claimed.

\smallbreak

It remains to prove~(a) and~(b).  To prove~(a) note 
that $\Tilde{D}_u$ has the same image as $D_u$.
(If $\eta\in\cY_u$ belongs to the image of $\Tilde{D}_u$
then $dH(u)\eta\in\mathrm{im}\,D_v$ and, since 
the second map in~(\ref{eq:dH}) is injective, this 
implies that $\eta$ belongs to the image of $D_u$.) 
Hence~(a) follows from the fact that $\cX_u$ has 
codimension one in $\Tilde{\cX}_u$.
To prove~(b) we use the fact that the first map in~(\ref{eq:dH}) 
is surjective and $dH(u)$ maps the kernel of $\Tilde{D}_u$
to the kernel of $D_v$.  Hence there is an element 
$$
\hat u\in\ker\Tilde{D}_u\cap\ker dH(u)\setminus\ker D_u.  
$$
Any such element satisfies 
$$
d\pi(u)\hat u\ne 0. 
$$
Otherwise there is a vector field $\xi\in\Vect(\Sigma)$ with 
$\hat u=du\cdot\xi$;  since $\hat u\in\Tilde{\cX}_u$ this implies 
that $\xi$ belongs to the Lie algebra of the stabilizer subgroup
of $(\Sigma,s_1,\dots,s_{\sn-1},\nu,j,v)$, contradicting stablility.
Thus we have proved~(a) and~(b) and hence the claim.

We prove `only if'.  
Let  $(\Sigma,s_*,\nu,j,v)$ be induced by a 
desingularization  $u:\Sigma\to Q_b$ of the  
infinitesimally universal unfolding $(\pi:Q\to B,S_*,H,b)$. 
Then the holomorphic configuration $(\Sigma,s_*,\nu,j,v)$ 
is regular, by Remark~\ref{rmk:regular}.
Next we argue as in~\cite{RS}.
Assume that $(\Sigma,s_*,\nu,j,v)$ is regular but not stable. 
Then either $\Sigma$ has genus one,
$v$ is constant, and there are no special points 
or else $\Sigma$ contains a component of
genus zero on which $v$ is constant and which carries
at most two special points. In either case there
is an abelian complex Lie group $A$ (namely $A=\Sigma$ in the former case
and $A=\C^*$ in the latter) and an effective holomorphic action
$$
A\times\Sigma\to\Sigma:(a,z)\mapsto a_\Sigma(z)
$$
that preserves the given structures.
Let $P:=A\times\Sigma$, $\pi_A$ be the projection
on the first factor, $R_*:=A\times s_*$, 
$f_A(a,z):=v(z)$, and $a_0\in A$ be the identity. 
If $u_0:\Sigma\to Q$ is any desingularization
of a fiber $Q_{b_0}$ of an unfolding $(\pi_B:Q\to B,S_*,f_B,b_0)$
which induces the given structures on~$\Sigma$, then
$$
\Phi_1(a,z):=u_0(z),\qquad \Phi_2(a,z):=u_0(a_\Sigma(z))
$$
are distinct morphisms from $(\pi_A,R_*,f_A,a_0)$
to $(\pi_B,S_*,f_B,b_0)$ which extend the fiber isomorphism
${P_{a_0}\to Q_{b_0}:(a_0,z)\mapsto u_0(z)}$.  
Hence $(\pi_B,S_*,f_B,b_0)$ is not a universal unfolding.
\end{proof}

\begin{theorem}\label{thm:universal} 
An unfolding of a regular stable map
is universal if and only if it is infinitesimally universal.
\end{theorem}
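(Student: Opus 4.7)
The plan is to prove the two directions separately. The ``only if'' direction is a soft consequence of Theorem~\ref{thm:exists} combined with the uniqueness clause in the definition of universal, while the ``if'' direction carries all of the analytic content and will be proved using Hardy decompositions and the finite dimensional reduction for Fredholm triples developed in Section~\ref{sec:interfred}.

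For the ``only if'' direction, suppose $(\pi_B,S_*,H_B,b_0)$ is universal. By the ``only if'' part of Theorem~\ref{thm:exists} (already proved), the induced holomorphic configuration is a regular stable map, so the ``if'' part of Theorem~\ref{thm:exists} furnishes a second unfolding $(\pi_{B'},S'_*,H_{B'},b'_0)$ of the same stable map which is infinitesimally universal. Assuming the ``if'' direction of the present theorem, $(\pi_{B'},S'_*,H_{B'},b'_0)$ is also universal. Universality on both sides produces morphisms in both directions which by the uniqueness clause compose to the identities on both unfoldings, so the two unfoldings are isomorphic as germs. Since infinitesimal universality (the condition on the maps~(\ref{eq:dH})) is obviously preserved under isomorphisms of unfoldings, $(\pi_B,S_*,H_B,b_0)$ is infinitesimally universal.

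For the ``if'' direction, fix an infinitesimally universal unfolding $(\pi_B:Q\to B,S_*,H_B,b_0)$, another unfolding $(\pi_A:P\to A,R_*,H_A,a_0)$, and a fiber isomorphism $f:P_{a_0}\to Q_{b_0}$. The goal is to produce a unique morphism $(\phi,\Phi)$ extending $f$. First, choose a Hardy decomposition $\Sigma=\Sigma'\cup\Sigma''$ of the desingularization that is compatible with the nodal set and avoids the marked points, and let $\Gamma=\Sigma'\cap\Sigma''$. Locally near $a_0$ and $b_0$, trivialize the underlying marked nodal Riemann families so that in every nearby fiber the interface curves are canonically identified with $\Gamma$; this is possible by the gluing/trivialization results of~\cite{RS} because these operations are carried out away from the nodes of $\Sigma$. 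On the Hilbert manifold $\cX$ of $W^{s,2}$-maps $\Gamma\to M$ (for $s$ sufficiently large), the boundary values of holomorphic sections over the $\Sigma'$-side of the two families define Hilbert submanifolds $\cX'_A,\cX'_B$, and similarly over the $\Sigma''$-side one obtains $\cX''_A,\cX''_B$. A holomorphic map on a fiber corresponds bijectively to a point of $\cX'\cap\cX''$, and a direct computation identifies $T_xX'\cap T_xX''$ with $\ker D_v$ and $T_xX/(T_xX'+T_xX'')$ with $\coker D_v$, and analogously on the $A$-side with $\ker D_u$ and $\coker D_u$. Infinitesimal universality is then precisely the statement that the two Fredholm triples associated to $A$ and $B$ are related by a morphism inducing isomorphisms on the finite-dimensional invariants.

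Apply the finite dimensional reduction theorem for morphisms of Fredholm triples from Section~\ref{sec:interfred} to the germ at $f(\Gamma)\in\cX'_B\cap\cX''_B$: it produces a unique germ of morphism $(\cX'_A\cap\cX''_A)\to(\cX'_B\cap\cX''_B)$ extending the boundary value of $f$. Project to $A$ and $B$ to extract $\phi:A\to B$; extend the boundary morphism holomorphically across $\Sigma'$ and $\Sigma''$ in every fiber to recover $\Phi:P\to Q$ with $\pi_B\circ\Phi=\phi\circ\pi_A$, $\Phi(R_\si)\subset S_\si$, and $H_A=H_B\circ\Phi$. Uniqueness of $(\phi,\Phi)$ follows from the uniqueness clause of the Fredholm triple reduction because any morphism extending $f$ restricts to such a morphism of intersections. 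The main obstacle is the verification that $\cX',\cX''$ really are Hilbert submanifolds with the claimed tangent spaces and that the Fredholm triple morphism hypotheses match infinitesimal universality; this is a packaging of the elliptic theory for $\bar\partial$ on bordered Riemann surfaces together with the node deformation analysis already required in~\cite{RS}, now coupled to the target $M$ through the commutative diagram~(\ref{eq:XY}).
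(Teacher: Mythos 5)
Your ``only if'' direction matches the paper: it is the formal argument that any two universal unfoldings are isomorphic, existence comes from Theorem~\ref{thm:exists}, and infinitesimal universality is invariant under isomorphism. That part is fine.

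The ``if'' direction has the right silhouette (Hardy decomposition plus the Fredholm--triple reduction of Section~\ref{sec:interfred}), but the crucial setup of the two triples and the morphism between them is wrong, and this is not a cosmetic issue. You place \emph{both} triples inside a single Hilbert manifold $\cX$ of maps $\Gamma\to M$ and then claim that the $A$-side intersection computes $\ker D_u$ and the $B$-side computes $\ker D_v$. That identification cannot hold. The operator $D_u$ acts on sections of $u^*TQ$ for $u:\Sigma\to Q_b$, not on sections of $v^*TM$; its kernel and cokernel involve variations of the base point $b$ through the constraint $d\pi(u)\hat u\equiv\text{const}$ in the definition of $\cX_u$ (Definition~\ref{def:infuniv}). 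A Hilbert submanifold of $H^s(\Gamma,M)$ has no way of seeing the target $Q$ or the $b$-direction. This is exactly why the paper introduces the space $\cU$ of triples $(a,\alpha,b)$ with $\alpha:\Gamma_a\to U_b\subset Q_b$ (so $\cU'$, $\cU''$ record holomorphic extensions \emph{into $Q$}, with the nodal and marked-point matching conditions), and separately the space $\cV$ of maps $\Gamma_a\to M$ with $\cV',\cV''$ recording holomorphic extensions \emph{into $M$}. Theorem~\ref{thm:U}(ii) then identifies $T\cU'_a\cap T\cU''_a\cong\ker D_u$ and Theorem~\ref{thm:V}(ii) identifies $T\cV'_a\cap T\cV''_a\cong\ker D_v$, and the relevant morphism is composition with $H_B$: $\cU_a\to\cV_a$, $(\alpha,b)\mapsto H_B\circ\alpha$. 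Infinitesimal universality is precisely the exactness of \emph{that} morphism (\ref{infuniv}); without the two distinct targets $Q$ and $M$ there is nothing for $H_B$ to compose with and no morphism whose exactness encodes the hypothesis.

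There are two further gaps. First, your construction of $(\phi,\Phi)$ from a pointwise finite-dimensional reduction is too vague: what is actually needed is the parametrized lifting result, Corollary~\ref{cor:fredstable}(ii) (in the complex category, Remark~\ref{rmk:sh}), applied to the family of exact morphisms $\cU_a\to\cV_a$ over $a\in A$ and to the holomorphic section $a\mapsto\beta_a:=H_A|\Gamma_a\in\cV'_a\cap\cV''_a$; this produces the unique holomorphic section $a\mapsto(\alpha_a,b_a)\in\cU'_a\cap\cU''_a$ lifting it, and $\phi(a):=b_a$, $\Phi|P_a:=f_a$ (the unique fiber isomorphism extending $\alpha_a$ from Lemma~\ref{le:fgamma}). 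Second, you do not address holomorphicity of $\Phi$. That requires separate arguments on $\INT(P')$ (via~\cite[Lemma~10.18]{RS}) and on $\INT(P'')$ (by writing $\Phi$ as a composition of holomorphic maps using the Hardy trivialization, noting that the complex structure on $\Omega$ is twisted by $\eta$ as in~(\ref{eq:JP})), followed by a removability-across-$\p P'$ observation. Uniqueness of the \emph{holomorphic} morphism is the uniqueness clause in Corollary~\ref{cor:fredstable}(ii), not the finite-dimensional normal-form reduction per se.
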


\begin{proof}
We prove `if' in Section~\ref{sec:proof}.
For `only if' we argue as in~\cite{RS}.
A composition of morphisms (of nodal families of maps) 
is again a morphism. The only morphism which is the identity 
on the central fiber of a universal unfolding  is the identity.
It follows that any two universal unfoldings of the same 
holomorphic configuration are isomorphic.
By Theorem~\ref{thm:exists} there is  an 
infinitesimally universal unfolding and by `if'  it  is universal 
and hence isomorphic to every other universal unfolding.
Any unfolding isomorphic to an infinitesimally
universal unfolding is itself infinitesimally universal.
\end{proof}

\xmpl 
Here is an example of an unfolding which is universal
but not infinitesimally universal. 
Let $B=\C$, $b_0=0$, $\Sigma$ be a Riemann surface 
of genus $\sg\ge1$, $Q=M=B\times\Sigma$, 
$\pi_B:Q\to B$ be the projection on the first factor,
and $H_B:Q\to M$ be the identity map.
This is trivially universal as follows.
If $(\pi_A,H_A,a_0)$ is another unfolding 
and $f_0:P_{a_0}\to Q_{b_0}$ is a fiber isomorphism
as in~\ref{family}, then $f_0=H_A|P_{a_0}$,
the unique solution of $H_B\circ\Phi=H_A$ is  $\Phi=H_A$,
and $\phi$ is uniquely determined by the condition
$\pi_B\circ\Phi=\phi\circ\pi_A$.
To show that that the example is not infinitesimally universal  
it is enough (by Theorem~\ref{thm:exists})
to show that the fiber is not regular, i.e. that
$$
\im D_v +dv\cdot\Omega^{0,1}_j(\Sigma,T\Sigma)
\subsetneq \Omega^{0,1}_j(\Sigma,TM)
$$
where $v:\Sigma\to M$ is the map $v(z):=(b_0,z)$.
Now $TM$ is the direct sum of $dv\cdot T\Sigma$ with a trivial bundle, so
it is enough to show that $D_v$ followed by projection of the trivial
bundle is not surjective. But this is the linear operator
$\dbar:\Omega^0(\Sigma)\to \Omega^{0,1}_j(\Sigma)$.
Its cokernel is the space of holomorphic $1$-forms 
and it has dimension $\sg$.
\lpmx

\begin{theorem}\label{thm:stable}
If an unfolding $(\pi,S_*,H,b_0)$ is infinitesimally universal, then
the unfolding $(\pi,S_*,H,b)$ is infinitesimally universal 
for $b$ sufficiently near $b_0$.
\end{theorem}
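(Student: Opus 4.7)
The plan is to recast infinitesimal universality as the exactness of a three-term Fredholm complex and then to deduce openness from upper semicontinuity of cohomology dimensions in a continuous family of such complexes.

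First, for $(\pi,S_*,H,b)$ with a desingularization $u\colon\Sigma\to Q_b$ and $v:=H\circ u$, I form the complex
\[
C:\qquad 0 \longrightarrow \cX_u \xrightarrow{\alpha} \cY_u \oplus \cX_v \xrightarrow{\beta} \cY_v \longrightarrow 0,
\]
with $\alpha(\hat u) := (D_u\hat u,\,dH(u)\hat u)$ and $\beta(\eta,\hat v) := dH(u)\eta - D_v\hat v$. Commutativity of the square~(\ref{eq:XY}) gives $\beta\alpha = 0$. A short diagram chase shows that $H^0(C)=0$, $H^1(C)=0$, and $H^2(C)=0$ are equivalent respectively to injectivity of $dH(u)\colon\ker D_u\to\ker D_v$; the conjunction of surjectivity of that map with injectivity of $dH(u)\colon\coker D_u\to\coker D_v$; and surjectivity of the latter. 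Hence exactness of $C$ is equivalent to the bijectivity of both maps in~(\ref{eq:dH}), i.e., to infinitesimal universality of the unfolding at $b$.

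Next, for $b$ close to $b_0$ I would build a continuous family of desingularizations $u_b\colon\Sigma_b\to Q_b$ and set $v_b := H\circ u_b$, then pass to Sobolev completions so that $\cX_{u_b}$, $\cY_{u_b}$, $\cX_{v_b}$, $\cY_{v_b}$ assemble into continuous Banach bundles over a neighborhood of $b_0$ with $\alpha_b$, $\beta_b$ continuously varying bounded operators. Fredholmness of $D_{u_b}$ and $D_{v_b}$ forces each $H^i(C_b)$ to be finite-dimensional, so $C_b$ is a continuous family of Fredholm complexes. Upper semicontinuity of the numbers $\dim H^i(C_b)$ in $b$ then yields $H^i(C_b)=0$ for all $b$ in some neighborhood of $b_0$, which by the reformulation above is infinitesimal universality at $b$.

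The serious work lies in setting up the Banach-space framework of the previous paragraph across possible jumps in the topological type of $\Sigma_b$ when nodes of $Q_{b_0}$ are smoothed for $b\ne b_0$. This requires the same kind of plumbing local models, a common ambient reference surface, and weighted Sobolev norms as are developed in Section~\ref{sec:proof} in the course of the proof of Theorem~\ref{thm:exists}; once that machinery is available, the algebraic reformulation and the semicontinuity step are essentially formal.
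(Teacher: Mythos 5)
Your algebraic reformulation is correct: infinitesimal universality of $(\pi,S_*,H,b)$ is indeed equivalent to exactness of the three-term complex
$0\to\cX_u\xrightarrow{\alpha}\cY_u\oplus\cX_v\xrightarrow{\beta}\cY_v\to 0$,
and this is conceptually the same as the paper's characterization of exactness for the morphism of Fredholm quadruples $(\cU_a,\cU'_a,\cU''_a)\to(\cV_a,\cV'_a,\cV''_a)$ in~\ref{infuniv}. The openness-via-semicontinuity strategy is also the paper's strategy (Theorem~\ref{thm:fredstable} and Corollary~\ref{cor:fredstable}).

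The gap is precisely the paragraph you flag as ``the serious work.'' You propose to assemble $\cX_{u_b},\cY_{u_b},\cX_{v_b},\cY_{v_b}$ into continuous Banach bundles over a neighborhood of $b_0$ using ``plumbing local models, a common ambient reference surface, and weighted Sobolev norms,'' but this is not carried out, and in fact it is not what the paper does. The paper never puts the function spaces over the varying, topologically jumping $\Sigma_b$ into a continuous Banach bundle. Instead, the whole point of Sections~\ref{sec:Hardy} and~\ref{sec:interfred} is to avoid that problem via Hardy decompositions: one expresses $\ker D_u$, $\coker D_u$ as $T\cU'_a\cap T\cU''_a$ and $T\cU_a/(T\cU'_a+T\cU''_a)$ (Theorem~\ref{thm:U}), and $\ker D_v$, $\coker D_v$ likewise via $\cV'_a,\cV''_a\subset\cV_a$ (Theorem~\ref{thm:V}), where $\cU,\cV$ are Hilbert manifolds modeled on $H^s$-maps from the \emph{fixed} $1$-manifold $\Gamma$. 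Because $\Gamma$ does not change its topological type as $b$ varies, the submanifolds $\cU',\cU'',\cV',\cV''$ vary smoothly over $A$ (the submersion condition in Theorems~\ref{thm:U}(iv),~\ref{thm:V}(iv)), and the stability Corollary~\ref{cor:fredstable} applies directly. Without this substitution your ``continuous family of Fredholm complexes'' has no honest continuous structure to which to apply semicontinuity; constructing one by a weighted-Sobolev gluing scheme is a genuinely different (and much longer) route, and your proposal does not supply it. So what is written is a correct reformulation plus a pointer at the difficulty, not a proof.
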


\para\label{univ-family}
Fix two nonnegative integers $\sg$ and $\sn$ and a homology 
class $\sd\in H_2(M;\Z)$. 
A \jdef{universal family of maps} of type $(\sg,\sn,\sd)$
is a marked nodal family of maps $(\pi_B:Q\to B,S_*,H_B)$ 
satisfying the following conditions.
\begin{description}
\item[(1)] 
$(\pi_B,S_*,H_B,b)$ is a universal unfolding of maps of type $(\sg,\sn,\sd)$
for every $b\in B$.
\item[(2)] 
Every regular stable map of type $(\sg,\sn,\sd)$
arises from a desingularization of at least one fiber of $\pi_B$.
\item[(3)] 
$B$ is second countable.
\end{description}
The existence of a universal marked nodal family of maps 
for every triple $(\sg,\sn,\sd)$
follows immediately from 
Theorems~\ref{thm:exists}, \ref{thm:universal}, and~\ref{thm:stable} 
as in~\cite[Proposition~6.3]{RS}.
\arap

\para\label{B-Gamma}
Every universal family $(\pi_B:Q\to B,S_*,H_B)$ 
of maps of type $(\sg,\sn,\sd)$ determines a groupoid 
$(B,\Gamma,s,t,e,i,m)$ as in~\cite[Definition~6.4]{RS};
here $\Gamma$ denotes the set of all triples $(a,f,b)$ 
such that $a,b\in B$ and $f:Q_a\to Q_b$ is a fiber isomorphism
satisfying $H_B\circ f = H_B|Q_a$,  and the structure maps 
$s,t:\Gamma\to B$, $e:B\to\Gamma$, $i:\Gamma\to\Gamma$, 
and $m:\Gamma\Times{s}{t} \Gamma\to\Gamma$ 
are defined by
$$
s(a,f,b):=a,\qquad
t(a,f,b):=b,\qquad
e(a):=(a,\id,a),
$$
$$
i(a,f,b):=(b,f^{-1},a),\qquad
m((b,g,c),(a,f,b)):=(a,g\circ f,c).
$$
The associated groupoid is equipped with a functor
$
B\to\bar\cB^\reg_{\sg,\sn,\sd}(M,J):b\mapsto\Sigma_b 
$
to the groupoid of Definition~\ref{def:regular}, i.e. $\iota_b:\Sigma_b\to Q_b$
denotes the canonical desingularization in~\cite[Remark~4.4]{RS}.
By definition the induced map
$$
B/\Gamma\to\bar\cM^\reg_{\sg,\sn,\sd}(M,J)
$$
on orbit spaces is bijective.  As in~\cite[Theorem~6.5]{RS} 
the groupoid $(B,\Gamma)$ equips the moduli space 
$\bar\cM^\reg_{\sg,\sn,\sd}(M,J)$ with an orbifold structure which 
is independent of the choice of the universal family.  
\arap

\begin{theorem}\label{thm:proper}
Let $(\pi_B:Q\to B,S_*,H_B)$ 
 be a universal family of maps of 
type $(\sg,\sn,\sd)$ as in~\ref{univ-family}.  
Then the associated 
groupoid $(B,\Gamma)$ constructed in~\ref{B-Gamma} is proper
in the sense of~\cite[2.2]{RS}.
\end{theorem}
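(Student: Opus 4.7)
The plan is to verify that the structure map $(s,t)\colon \Gamma \to B \times B$ is proper. Let $(a_n, f_n, b_n) \in \Gamma$ be a sequence with $a_n \to a \in B$ and $b_n \to b \in B$; the task is to produce a fiber isomorphism $f\colon Q_a \to Q_b$ satisfying $H_B \circ f = H_B|Q_a$ together with a subsequence of $(a_n,f_n,b_n)$ converging to $(a,f,b)$ in the natural topology on $\Gamma$. The argument parallels that of the analogous statement in~\cite{RS} for marked nodal Riemann families, replacing Deligne--Mumford compactness by Gromov compactness for stable maps.

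First I would reduce to a fixed underlying smooth surface. Choose coherent desingularizations $u^a_n\colon \Sigma^a \to Q_{a_n}$ on a fixed smooth surface $\Sigma^a$ (smoothing those nodes of $Q_a$ which get resolved in $Q_{a_n}$) to obtain stable maps $X_n = (\Sigma^a, s_*, \nu^a_n, j^a_n, v^a_n)$ with $v^a_n := H_B \circ u^a_n$; do the same for $b_n$ to obtain $Y_n = (\Sigma^b, s_*, \nu^b_n, j^b_n, v^b_n)$. The fiber isomorphism $f_n$ lifts to an isomorphism of stable maps $\phi_n\colon X_n \to Y_n$, namely a $(j^a_n, j^b_n)$-holomorphic diffeomorphism respecting marked points and nodes and satisfying $v^a_n = v^b_n \circ \phi_n$.

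Next I would extract limits by Gromov compactness. Since all $X_n$ represent the class $\sd$ and have bounded energy and fixed topological type, Gromov compactness (combined with the equivalence of Gromov and orbifold topologies established in Section~\ref{sec:topology}) gives a subsequence along which $X_n \to X_a$, the stable map arising from $Q_a$, and analogously $Y_n \to Y_b$. Applying Gromov compactness to the graphs of $\phi_n$ in $\Sigma^a \times \Sigma^b$ (equivalently, Arzel\`a--Ascoli together with elliptic bootstrapping for $(j,j)$-holomorphic maps whose image lies in a compact set) extracts a further subsequence along which $\phi_n$ converges in $C^\infty_{\mathrm{loc}}$ away from nodes to a $(j_a, j_b)$-holomorphic map $\phi\colon \Sigma^a \to \Sigma^b$. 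Passing to the limit in the identity $v^a_n = v^b_n \circ \phi_n$ yields $v_a = v_b \circ \phi$, so $\phi$ is an isomorphism of stable maps and descends to a fiber isomorphism $f\colon Q_a \to Q_b$ satisfying $H_B \circ f = H_B|Q_a$. This produces the required element $(a, f, b) \in \Gamma$ together with the convergence $(a_n, f_n, b_n) \to (a, f, b)$.

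The hard part is the behaviour near nodes when the complex structure degenerates. If $Q_a$ lies on a deeper stratum of $B$ than the $Q_{a_n}$, then $\phi_n$ is an isomorphism between surfaces with fewer nodes than the limit, and one must check that the limiting $\phi$ actually respects the nodal structure. The standard cylindrical-neck picture (as in~\cite{RS}) shows that $\phi_n$ carries thin necks of $\Sigma^a$ to thin necks of $\Sigma^b$ of matching conformal moduli, and the Hardy decomposition framework of the present paper provides the precise control required; compatibility with the maps $v$ is automatic from the Gromov convergence. A secondary point, routine given Theorem~\ref{thm:stable}, is identifying the natural topology on $\Gamma$ with the one arising from $C^\infty$ convergence of fiber isomorphisms, which follows from the \'etale structure of the groupoid at $(a,f,b)$.
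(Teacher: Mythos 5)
Your proposal takes a genuinely different route from the paper, and the route you choose leaves the essential difficulty unresolved. The paper does not re-run a Gromov-compactness argument for the fiber isomorphisms $f_k$; instead it reduces to the already-established properness of the Deligne--Mumford groupoid, \cite[Theorem~6.6]{RS}. Concretely: when the underlying marked nodal Riemann surface of $Q_{a_0}$ is stable, the universal unfolding near $a_0$ and $b_0$ can be taken to have the explicit form~(\ref{eq:BQS}), which fibers over the universal unfolding $(\pi_A:P\to A,R_*)$ of the underlying Riemann surface. A fiber isomorphism $f_k$ of the map-unfolding then projects to a fiber isomorphism $f_k'$ of the Riemann-surface-unfolding; \cite[Theorem~6.6]{RS} gives DM-convergence of a subsequence $f_k'\to f_0'$, and the additional constraint $H_B\circ f_k = H_B|Q_{a_k}$ passes to the limit to recover $f_0$. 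The case where the underlying surface is not stable is handled by adding auxiliary marked points. This reduction sidesteps entirely the analysis of the $\phi_n$ near degenerating necks.

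Your argument, by contrast, must re-establish that analysis from scratch, and that is precisely where the proposal has a gap. The paragraph beginning ``The hard part is the behaviour near nodes'' is not a proof but a statement that a proof exists: asserting that ``the standard cylindrical-neck picture shows that $\phi_n$ carries thin necks to thin necks of matching conformal moduli, and the Hardy decomposition framework provides the precise control required'' is a promissory note for the entire content of \cite[Theorem~6.6]{RS}. In particular, the Arzel\`a--Ascoli step needs a source of equicontinuity for $(j^a_n,j^b_n)$-holomorphic maps when both complex structures degenerate along necks; the limit $\phi$ must then be shown to be nonconstant on each component, to extend over the nodes, to respect the nodal pairings, and to be bijective. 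Moreover, when the underlying Riemann surface is unstable (e.g.\ genus-zero components with $\leq 2$ special points carrying a nonconstant map), the sequence $\phi_n$ is only controlled through the constraint $v^a_n = v^b_n\circ\phi_n$, and calling this ``automatic from Gromov convergence'' skips the step where one must actually extract that control. If you want to pursue your direct approach you would essentially have to redo the work of \cite[\S13]{RS} with the maps $v_k$ carried along; the paper's reduction is more economical because it discards $H$ and quotes the result.
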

\begin{proof}
See Section~\ref{sec:proof}.
\end{proof}

\begin{corollary} 
Fix a homology class $\sd\in H_2(M;\Z)$. 
Then the moduli space $\bar\cM^\reg_{\sg,\sn,\sd}(M,J)$ 
of isomorphism classes of regular stable maps of genus $\sg$ 
with $\sn$ marked points representing the class $\sd$ 
is a complex orbifold of dimension
$$
\dim_\C\bar\cM^\reg_{\sg,\sn,\sd}(M,J) 
=(g-1)(3-\dim_\C M) +\inner{c_1(TM)}{\sd} +\sn. 
$$
\end{corollary}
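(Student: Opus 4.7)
The plan is to combine the structural theorems already established with a Riemann--Roch computation. By paragraph~\ref{univ-family}, Theorems~\ref{thm:exists}, \ref{thm:universal}, and \ref{thm:stable} produce a universal family of maps $(\pi_B:Q\to B,S_*,H_B)$ of type $(\sg,\sn,\sd)$, and paragraph~\ref{B-Gamma} constructs from it a groupoid $(B,\Gamma)$ whose orbit space is in canonical bijection with $\bar\cM^\reg_{\sg,\sn,\sd}(M,J)$. Theorem~\ref{thm:proper} asserts this groupoid is proper, so as in \cite[Theorem~6.5]{RS} the construction endows the moduli space with a Hausdorff orbifold structure. Since $B$ is a complex manifold and the elements of $\Gamma$ are fiber isomorphisms of holomorphic families of maps, all chart transitions are biholomorphic and the orbifold is complex analytic.

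To compute the complex dimension it suffices to compute $\dim_\C B$ at an arbitrary point $b_0\in B$ whose fiber desingularizes to a regular stable map $(\Sigma,s_*,\nu,j,v)$. Via the surjection $d\pi(u):\cX_u\to T_{b_0}B$ (whose kernel consists of vertical sections already incorporating the marked-point conditions built into $\cX_u$) together with the isomorphisms $dH(u):\ker D_u\to\ker D_v$ and $dH(u):\coker D_u\to\coker D_v$ of Definition~\ref{def:infuniv}, one identifies $T_{b_0}B$ with the kernel of the Fredholm operator $\cD_{j,v}$ of paragraph~\ref{regular}, restricted to variations preserving the marked and nodal structure, modulo the infinitesimal action of $\Diff(\Sigma,\nu,s_*)$. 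Regularity makes $\cD_{j,v}$ surjective (see~\ref{regular} and Remark~\ref{rmk:regular}), and stability makes the Lie algebra of the stabilizer vanish, so
$$
\dim_\C T_{b_0}B = \mathrm{index}_\C D_v + \dim_\C\overline{\cM}_{\sg,\sn},
$$
where the second summand accounts for the remaining deformations of $(j,s_*,\nu)$ modulo reparametrization.

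The two terms on the right are standard. Riemann--Roch applied to the Cauchy--Riemann operator on the holomorphic bundle $v^*TM$ over the nodal surface of arithmetic genus $\sg$, with nodal matching conditions at the $\sk$ nodes, gives
$$
\mathrm{index}_\C D_v = \sm(1-\sg)+\inner{c_1(TM)}{\sd},
$$
and $\dim_\C\overline{\cM}_{\sg,\sn}=3\sg-3+\sn$. Summing yields
$$
\dim_\C\bar\cM^\reg_{\sg,\sn,\sd}(M,J) = \sm(1-\sg)+\inner{c_1(TM)}{\sd}+3\sg-3+\sn = (\sg-1)(3-\sm)+\inner{c_1(TM)}{\sd}+\sn,
$$
as claimed. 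The main subtlety is the bookkeeping in the middle step: one must verify that the nodal matching conditions for infinitesimal deformations of $v$ enter the Riemann--Roch count so as to reproduce the arithmetic-genus formula above, and that the complementary nodal deformation parameters are correctly absorbed into $\dim_\C\overline{\cM}_{\sg,\sn}=3\sg-3+\sn$. Once the index bookkeeping for nodes is in place, the remaining calculation is routine.
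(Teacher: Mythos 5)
Your proposal is correct and takes essentially the same route as the paper. The paper does not spell out a separate proof of the Corollary: the orbifold structure comes from the groupoid $(B,\Gamma)$ of \ref{B-Gamma} together with Theorem~\ref{thm:proper}, exactly as you describe, and the dimension of the base $B$ is computed along the way in Theorem~\ref{thm:slice}~(iii) and in the proof of Theorem~\ref{thm:exists} (equation~(\ref{eq:dimB})). Your middle-step decomposition $\dim_\C T_{b_0}B = \INDEX_\C D_v + (3\sg-3+\sn)$ is precisely the identity $\dim_\C(\cV'\cap\cV'') = \INDEX_\C D_{v_0} + \dim_\C A$ appearing in the proof of Theorem~\ref{thm:transverse}~(ii), with $A$ the base of a universal unfolding of the underlying marked nodal Riemann surface (so $\dim_\C A = 3\sg-3+\sn$); and your appeal to surjectivity of $\cD_{j,v}$ to pass from the Fredholm index to an actual dimension count is the same transversality step the paper uses.
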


\rmk 
If $(M,\omega,J)$ is a K\"ahler manifold with a transitive action by
a compact Lie group $G$, then every genus zero configuration in 
$M$ is regular (see~\cite{RT} or~\cite[Proposition~7.4.3]{MS}). 
Hence the moduli space $\bar\cM_{0,\sn,\sd}(M,J)$ 
is a (compact) complex orbifold for every $\sd\in H_2(M;\Z)$. 
For $M=\C P^\sm$ 
this result is due to Fulton and Pandharipande~\cite{FP}.
Their result applies to all projective manifolds 
whenever all the stable maps are regular.
In such cases they show that the moduli space is an algebraic orbifold.
In contrast, our result shows that the set of regular maps
into {\em any} complex manifold is an orbifold. 
\kmr


\section{Stable maps without nodes}\label{sec:nonodes}

In this section we restrict attention to regular stable maps without nodes.  
Let $(\Sigma,s_*,j_0,v_0)$ be a  regular stable map
of type 
$(\sg,\sn,\sd)$ 
without nodes. 
We will construct an infinitesimally universal unfolding
$(\pi_B,S_*,H_B,b_0)$ of $(\Sigma,s_*,j_0,v_0)$, 
show that it is universal, and prove that every other
infinitesimally universal unfolding of $(\Sigma,s_*,j_0,v_0)$
is isomorphic to the one we've constructed. 

\para\label{cBcQ}
Fix two nonnegative integers $\sn$ and $\sg$,
a homology class $\sd\in H_2(M;\Z)$,  
and a compact  oriented surface 
$\Sigma$ without boundary 
of genus $\sg$. 
Denote
$$
\cP:=\left\{(s_1,\dots,s_\sn,j,v)\,\Bigg|\,\begin{array}{l}
s_*\in\Sigma^\sn\setminus\Delta,\,
j\in\cJ(\Sigma),\,v\in \Cinf(\Sigma,M) \\
\bar\p_{j,J}(v)=0,\,[v]=\sd\\ 
\cD_{j,v}\mbox{ is onto},\,
(s_*,j,v)\mbox{ is stable}
\end{array}
\right\}
$$
where $\Delta\subset\Sigma^\sn$ denotes the fat diagonal,
$[v]:=v_*[\Sigma]$ denotes the homology class represented by $v$,
and
$$
\cD_{j,v}:
\Omega^{0,1}_j(\Sigma,T\Sigma)\times
\Omega^0(\Sigma,v^*TM)\to\Omega^{0,1}(\Sigma,v^*TM)
$$
denotes the linearized Cauchy--Riemann operator 
of~\ref{regular}.  Thus $\cP$ is the regular part of the
space $\cP_{\sn,\sd}(\Sigma;M,J)$ in~\ref{Diff}. 
The group 
$$
\cG:=\Diff_0(\Sigma)
$$ 
of orientation preserving diffeomorphisms of $\Sigma$
that are isotopic to the identity acts on $\cP$ 
as in equation~(\ref{eq:Diff}):
$$
g^*(s_1,\dots,s_\sn,j,v)
:= (g^{-1}(s_1),\dots,g^{-1}(s_\sn),g^*j,g^*v)
$$
for $g\in\cG$. 
\arap

\rmk 
Roughly speaking, the tuple $(\cQ\to\cB,\cS_*,\cH)$
defined by
$$
\cB := \cP/\cG,\qquad \cQ := \cP\times_\cG\Sigma,
$$
$$
\cH\left([s_1,\dots,s_\sn,j,v,z]\right) := v(z),\quad
\cS_\si := \left\{[s_1,\dots,s_\sn,j,v,z]\in\cQ\,|\,z=s_\si\right\}
$$
is a universal family. Our task is to make sense of these quotients.
In the case 
$$
\sn>2-2\sg
$$ 
the action is free. In general, the action is only semi-free, 
i.e.~the isotropy group of a point in~$\cP$ is always finite 
but it might be nontrivial.
(Example: $\sn=0$, $\Sigma=M=S^2$, $v(z)=z^2$.)
In this case the quotient spaces $\cB$ and $\cQ$ cannot 
be manifolds and hence do not qualify as universal unfoldings.
However, we shall prove that even in this case every point in $\cP$
admits a holomorphic local slice for the $\cG$-action and 
that these slices can be used to construct universal unfoldings. 
\kmr

\para\label{TP}
The space $\cP$ is an infinite dimensional Frech\'et manifold.
Its tangent space at a point $p=(s_*,j,v)\in\cP$ 
is the space $T_p\cP$ of all tuples $\hat p=(\hat s_*,\jhat,\hat v)$
with $\hat s_\si\in T_{s_\si}\Sigma$, $\jhat\in T_j\cJ(\Sigma)$, 
$\hat v\in\Omega^0(\Sigma,v^*TM)$ that satisfy
\begin{equation}\label{eq:TP}
D_v\hat v + \frac12 J(v)dv\circ\jhat=0.
\end{equation}
The Lie algebra of $\cG$ is $\Lie(\cG)=\Vect(\Sigma)$ and 
its (contravariant) infinitesimal action at $p\in\cP$ is the operator 
$\cL_p:\Vect(\Sigma)\to T_p\cP$
defined by
\begin{equation}\label{eq:cL}
\cL_p\xi:= \left.\frac{d}{dt}g_t^*p\right|_{t=0}
\end{equation}
where $p=(s_*,j,v)\in\cP$ and $\R\to\cG:t\mapsto g_t$ satisfies
\begin{equation}\label{eq:gxi}
g_0=\id,\qquad \left.\frac{d}{dt}g_t\right|_{t=0}=\xi. 
\end{equation}
(The right hand side of~(\ref{eq:cL}) is independent of the choice of $g_t$
satisfying~(\ref{eq:gxi}).)
Since $2j\bar\p_j\xi=\cL_\xi j\in T_j\cJ(\Sigma)$ is the 
Lie derivative of $j$ in the direction $\xi$, equation~(\ref{eq:cL}) may be written
\begin{equation}\label{eq:infinitesimal}
\cL_p\xi=(-\xi(s_1),\dots,-\xi(s_\sn),2j\bar\p_j\xi,dv\cdot\xi),\qquad
p=(s_1,\dots,s_\sn,j,v).
\end{equation}
The image of $\cL_p$ is the tangent space $T_p\cG^*p$ 
to the $\cG$-orbit of $p$.  The space $T_p\cP$ carries 
a natural complex structure 
$\cI(p):T_p\cP\to T_p\cP$ given by 
\begin{equation}\label{eq:complex}
\cI(p)(\hat s_1,\dots,\hat s_\sn,\jhat,\hat v):=
(j(s_1)\hat s_1,\dots,j(s_\sn)\hat s_\sn,j\jhat,J(v)\hat v)
\end{equation}
for $p=(s_1,\dots,s_\sn,j,v)\in\cP$.  The tangent space $T_p\cP$
is invariant under $\cI(p)$ because the differential $dv$ 
and the operator $D_v$ are complex linear.  
The $\cG$-action preserves this complex structure 
and the formula
$$
\cL_pj\xi = \cI(p)\cL_p\xi,\qquad p=(s_*,j,v)\in\cP,
$$
shows that $T_p\cG^*p$ is a complex subspace of $T_p\cP$.  
In other words, the orbits of $\cG$ are 
complex submanifolds of $\cP$ and the complex structure 
descends to the quotient $\cP/\cG$.  
The space $\cP$ (without marked points) is the zero set 
of the section $(j,v)\mapsto\bar\p_{j,J}(v)$ of an infinite 
dimensional vector bundle.  The intrinsic differential of this section 
at a zero $(j,v)$ is the operator $\cD_{j,v}$ in~\ref{regular} and this 
operator is surjective by assumption.  Condition~(\ref{eq:TP}) asserts
that the pair $(\jhat,\hat v)$ belongs to the kernel of $\cD_{j,v}$.
Choosing a suitable Sobolev completion $\cP^s$ of $\cP$ 
(see the proof of Theorem~\ref{thm:slice} below)
we can deduce that $\cP^s$ is a smooth Hilbert manifold whose tangent space
is given by~(\ref{eq:TP}).  The action of $\cG$ on this Hilbert manifold
is not smooth; on any Sobolev completion its differential takes
values in another Sobolev completion with one derivative less.
However, in the Frech\'et category, where $B$ is a finite dimensional
smooth manifold, the notion of a smooth map $\iota:B\to\cP$ and its differential 
$d\iota(b):T_bB\to T_{\iota(b)}\cP$ have well defined meanings 
via evaluation maps.  
\arap

\begin{lemma}\label{le:hol-unfolding}
Let $A$ be a complex manifold (with complex structure $\sqrt{-1}$), 
$$
A\to\cP:a\mapsto p(a)=(r_1(a),\dots,r_\sn(a),j(a),v(a))
$$ 
be a smooth map and $\eta:TA\to\Vect(\Sigma)$ be 
a $1$-form on $A$ with values in the space 
of vector fields  on $\Sigma$ such that
\begin{equation}\label{eq:jeta1}
\eta(a,\sqrt{-1}\hat{a})=-j(a)\eta(a,\hat{a})
\end{equation}
for all $(a,\hat{a})\in TA$.
Define an almost complex structure $J_P$ on $P:=A\times\Sigma$,
sections $R_1,\dots,R_\sn\subset P$, and a map $H_A:P\to M$ by 
\begin{equation}\label{eq:JP}
J_P(a,z)(\hat a,\hat z) 
:= \left(\sqrt{-1}\hat a,j(a)(z)\hat z+\eta(a,\hat a)(z)\right),
\end{equation}
\begin{equation}\label{eq:RH}
R_\si := \left\{(a,r_\si(a))\,|\,a\in A\right\},\qquad
H_A(a,z) := v(a)(z).
\end{equation}
Then the following are equivalent.
\begin{description}
\item[(i)]
The tuple $(\pi_A,R_*,H_A)$ is a (holomorphic)  family of maps, 
i.e.~$J_P$ is integrable, each $R_\si$ is a complex submanifold of $P$,
and $H_A:P\to M$ is holomorphic.
\item[(ii)]
$p$ and $\eta$ satisfy the differential equation
\begin{equation}\label{eq:vortex}
dp(a)\hat a + \cI(p(a))dp(a)\sqrt{-1}\hat a 
- \cL_{p(a)}\eta(a,\sqrt{-1}\hat a) = 0
\end{equation}
for every $a\in A$ and every $\hat a\in T_aA$.
\end{description}
\end{lemma}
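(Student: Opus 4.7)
The plan is to decompose condition (i) into three independent requirements --- (i-a) integrability of $J_P$, (i-b) that each $R_\si$ is a complex submanifold of $P$, and (i-c) that $H_A$ is holomorphic --- and to match each of these, in turn, with one of the three components of~(\ref{eq:vortex}) obtained by projecting along the three slots of $p(a)=(r_*(a),j(a),v(a))$ and noting that $\cL_{p(a)}$ has components $(-\xi(r_\si(a)),2j\bar\p_j\xi,dv(a)\cdot\xi)$ while $\cI(p(a))$ acts slot-wise as $(j(r_\si(a)),\,j\cdot,\,J(v(a)))$. A preliminary check: condition~(\ref{eq:jeta1}) is exactly what forces $J_P^2=-\id$ pointwise, so~(\ref{eq:JP}) really does define an almost complex structure and the integrability question in (i-a) is meaningful.

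First I would dispose of the two pointwise equivalences. For (i-b), the tangent space to $R_\si$ at $(a,r_\si(a))$ is the graph of $dr_\si(a)$; applying $J_P$ to this graph and demanding that the image remain in the graph gives the identity $dr_\si(a)\sqrt{-1}\hat a = j(r_\si(a))dr_\si(a)\hat a + \eta(a,\hat a)(r_\si(a))$, which, after one use of~(\ref{eq:jeta1}), is the $r_\si$-component of~(\ref{eq:vortex}). For (i-c), the relation $dH_A\circ J_P = J_M\circ dH_A$ is automatic on purely vertical vectors $(0,\hat z)$ because each $v(a)$ is already holomorphic ($p(a)\in\cP$); testing on $(\hat a,0)$ produces $dv(a)\sqrt{-1}\hat a + dv(a)\cdot\eta(a,\hat a) = J_M(v(a))\,dv(a)\hat a$, which reduces, via~(\ref{eq:jeta1}) together with $dv(a)\circ j(a) = J_M(v(a))\circ dv(a)$, exactly to the $v$-component of~(\ref{eq:vortex}).

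The real content is the equivalence between (i-a) and the $j$-component of~(\ref{eq:vortex}). I would compute the Nijenhuis tensor $N_{J_P}(X,Y)=[J_PX,J_PY]-J_P[J_PX,Y]-J_P[X,J_PY]-[X,Y]$ on three kinds of pairs of local vector fields on $P=A\times\Sigma$: two vertical, one horizontal and one vertical (where by horizontal I mean a lift $\hat a\mapsto(\hat a,0)$ of a vector field on $A$), and two horizontal. The vertical-vertical tensor vanishes trivially since $J_P$ restricts to $j(a)$ on fibers and $\Sigma$ is a Riemann surface. The mixed case, combined with the identity $\cL_p(j\xi)=\cI(p)\cL_p\xi$, imposes no new condition beyond the structural requirement that $\eta(a,\hat a)$ take values in $\Vect(\Sigma)$. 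The horizontal-horizontal case produces the only substantive equation: after gathering the variation $dj(a)\hat a$ and the term $2j\bar\p_j\eta(a,\hat a)$ and invoking~(\ref{eq:jeta1}) once more, it is identified with the $j$-component of~(\ref{eq:vortex}).

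The main obstacle will be bookkeeping in the horizontal-horizontal Nijenhuis calculation, where the variation $dj(a)\hat a$, the bracket $[\eta(a,\hat a_1),\eta(a,\hat a_2)]$ of the vertical corrections, and the horizontal derivatives of $\eta$ all appear and must be recombined into the single equation predicted by the $j$-component of~(\ref{eq:vortex}). The cleanest organization, I expect, is to work in local complex coordinates $\alpha^\mu$ on $A$ and invoke the Newlander--Nirenberg theorem: integrability of $J_P$ is equivalent to Lie involutivity of the complexified $(0,1)$-tangent bundle of $J_P$, and the latter reduces to checking a single identity in each coordinate direction. Once the three slots match, (i)~$\iff$~(ii) follows by summing.
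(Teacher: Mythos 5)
Your reading of the two pointwise parts is fine: condition~\eqref{eq:jeta1} does force $J_P^2=-\mathrm{id}$, each $R_\si$ is a complex submanifold iff the $r_\si$-slot of~\eqref{eq:vortex} holds, and $H_A$ is holomorphic iff the $v$-slot holds; the paper does these bookkeeping calculations essentially the same way. The gap is in your treatment of integrability. You plan to compute the Nijenhuis tensor of $J_P$ and identify its horizontal--horizontal part with the $j$-slot of~\eqref{eq:vortex}, but this identification is false: that part of the Nijenhuis tensor contains, in addition to the expression
$dj(a)\hat a + j(a)\,dj(a)\sqrt{-1}\hat a - \cL_{\eta(a,\sqrt{-1}\hat a)}j(a)$,
a genuinely second-order quantity in $\eta$. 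Concretely, the obstruction involves
$$
\zeta(a)
:= \p_1\eta(a,\hat a)\sqrt{-1}\hat b - j(a)\p_1\eta(a,\hat a)\hat b
  - \p_1\eta(a,\hat b)\sqrt{-1}\hat a + j(a)\p_1\eta(a,\hat b)\hat a
  + [\eta(a,\hat a),\eta(a,\hat b)],
$$
i.e.\ $A$-derivatives of $\eta$ and the bracket of the vertical corrections, which the first-order equation~\eqref{eq:vortex} simply does not control. If you carry out your own computation honestly you will see these terms survive and there is no way to cancel them pointwise.

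The paper's proof of (ii)$\Rightarrow$(i) is therefore not a slot-by-slot Nijenhuis computation. Assuming~(ii), it shows (quoting the Corrigendum to~\cite{RS}, Lemma~B, and ``similar though somewhat lengthy calculations'') that $\zeta(a)$ is an infinitesimal automorphism of the configuration $p(a)$: $\cL_{\zeta(a)}j(a)=0$, $\zeta(a)(r_\si(a))=0$, $\cL_{\zeta(a)}v(a)=0$. It then invokes the \emph{stability} condition built into the definition of $\cP$ to conclude $\zeta(a)=0$, and only then cites Corrigendum Lemma~A to deduce integrability. Your proposal nowhere uses the hypothesis $p(a)\in\cP$ beyond holomorphicity of $v(a)$, so it cannot be right: the equivalence (i)$\iff$(ii) is genuinely \emph{not} a local linear-algebra statement, and the converse direction fails without stability. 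To repair your argument you would need to isolate the residual Nijenhuis term as exactly this $\zeta(a)$, recognize it as an infinitesimal symmetry of the stable configuration, and appeal to finiteness of the automorphism group.
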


\begin{proof}
We prove that~(i) implies~(ii). If the almost 
complex structure $J_P$ is integrable then, 
by~\cite[Corrigendum, Lemma~A]{RS}, we have
\begin{equation}\label{eq:jeta2}
dj(a)\hat a + j(a)dj(a)\sqrt{-1}\hat a
- \cL_{\eta(a,\sqrt{-1}\hat a)}j(a)=0.
\end{equation}
Moreover, for $\si=1,\dots,\sn$ the set $R_\si$ 
is a complex submanifold of $A\times\Sigma$,
if and only if
$$
dr_\si(a)\hat a + j(a)dr_\si(a)\sqrt{-1}\hat a
+ \eta(a,\sqrt{-1}\hat a)(r_\si(a)) = 0
$$
and $H_A:A\times\Sigma\to M$
is holomorphic if and only if
$$
(dv(a)\hat a)(z) 
+ J(v(a)(z))(dv(a)\sqrt{-1}\hat a)(z) 
- d(v(a))(z)\eta(a,\sqrt{-1}\hat a)(z)
= 0.
$$
In the last formula $(dv(a)\hat{a})(z)$ denotes the derivative 
of $v(a)(z)$ with respect to $a$ and $d(v(a))(z)\hat{z}$ 
denotes the derivative of $v(a)(z)$ with respect to $z$.
This proves that~(i) implies~(ii). 

Conversely, assume~(ii) and, without loss of generality, 
that $A$ is an open set in $\C^\sa$.
Fix two vectors $\hat a,\hat b\in \C^\sa$ and, for $a\in A$,
define $\zeta(a)\in\Vect(\Sigma)$ by  
\begin{eqnarray*}
\zeta(a)
&:=&
\p_1\eta(a,\hat a)\sqrt{-1}\hat b - j(a)\p_1\eta(a,\hat a)\hat b  \\
&&
- \p_1\eta(a,\hat b)\sqrt{-1}\hat a + j(a)\p_1\eta(a,\hat b)\hat a
+ [\eta(a,\hat a),\eta(a,\hat b)].
\end{eqnarray*}
Then
$$
\cL_{\zeta(a)}j(a)=0,\qquad 
\zeta(a)(r_i(a))=0,\qquad 
\cL_{\zeta(a)}v(a) = 0
$$
for $a\in A$ and $i=1,\dots,\sn$. Here the first 
equation follows from~\cite[Corrigendum, Lemma~B]{RS} 
and the other two equations follow from similar,
though somewhat lengthy, calculations.
Now it follows from the stability condition in the definition
of $\cP$ that $\zeta(a)=0$ for every $a\in A$ and hence, 
by~\cite[Corrigendum, Lemma~A]{RS} the almost complex 
structure $J_P$ is integrable. 
This proves the lemma.
\end{proof}

\para\label{slice}
Let $p_0:=(s_{0,*},j_0,v_0)\in\cP$, $B$ be a complex manifold 
with base point $b_0\in B$, and $\iota:B\to\cP$ be a smooth map 
such that $\iota(b_0)=p_0$.  
The map $\iota$ is called \jdef{holomorphic} if its 
differential $d\iota(b):T_bB\to T_{\iota(b)}\cP$ 
is complex linear for every $b\in B$.  
The map $\iota$ is called a \jdef{slice}  at $b_0$ if
for every smooth map $p:(A,a_0)\to(\cP,p_0)$ there is a 
neighborhood $A_0$ of $a_0$ in $A$
and unique smooth maps $\Phi:(A_0,a_0)\to(\cG,\id)$ and
$\phi:(A_0,a_0)\to(B,b_0)$ such that  
$$
p(a)=\Phi(a)^*\iota(\phi(a))
$$
for $a\in A_0$. The map $\iota$ is called an \jdef{infinitesimal slice} 
at $b_0$ if
\begin{equation}\label{eq:slice}
\mathrm{im}\,d\iota(b_0)\oplus T_{p_0}\cG^*p_0
=T_{p_0}\cP,\qquad \ker d\iota(b_0) = 0.
\end{equation}
Write $\iota(b)=:(\sigma_1(b),\dots,\sigma_\sn(b),j(b),v(b))$.
Then~(\ref{eq:slice}) can be expressed as follows.
\begin{description}
\item[($\dagger$)]
If $\hat b\in T_{b_0}B$ and $\hat u\in\Vect(\Sigma)$ satisfy
\begin{equation}\label{eq:slice1}
\left.
\begin{aligned}
d\sigma_\si(b_0)\hat b-\hat u(s_{0,\si}) &=0\\
dj(b_0)\hat b+2j_0\bar\p_{j_0}\hat u&=0\\
dv(b_0)\hat b+dv_0\cdot\hat u &=0
\end{aligned}
\right\}
\qquad\implies\qquad
\hat b=0,\;\;\hat u=0.
\end{equation}
\item[($\ddagger$)]
If $\hat s_\si\in T_{s_{0,\si}}\Sigma$, $\jhat\in T_{j_0}\cJ(\Sigma)$, 
and $\hat v\in\Omega^0(\Sigma,v_0^*TM)$ satisfy~(\ref{eq:TP}) 
then there exists a pair $(\hat b,\hat u)\in T_{b_0}B\times\Vect(\Sigma)$
such that 
\begin{equation}\label{eq:slice2}
\begin{aligned}
d\sigma_\si(b_0)\hat b-\hat u(s_{0,\si})&=\hat s_\si,\\
dj(b_0)\hat b + 2j_0\bar\p_{j_0}\hat u&=\jhat,\\
dv(b_0)\hat b + dv_0\cdot\hat u&=\hat v.
\end{aligned}
\end{equation}
\end{description}
\arap

\begin{theorem}[Slice Theorem]\label{thm:slice}
{\bf (i)} A smooth infinitesimal slice is a slice.

\smallskip\noindent{\bf (ii)}
If $\iota:B\to\cP$ is an infinitesimal slice at $b_0\in B$ 
then it is an infinitesimal slice at $b$ 
for $b$ sufficiently near $b_0$.

\smallskip\noindent{\bf (iii)}
Every point in $\cP$ admits a holomorphic infinitesimal slice 
$\iota:B\to\cP$ of complex dimension
$
\dim_\C B= (\sm-3)(1-\sg) + \inner{c_1}{\sd} + \sn.
$
\end{theorem}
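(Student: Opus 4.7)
The plan is to treat parts~(i) and~(ii) as short applications of the implicit function theorem in suitable Sobolev completions, and to devote the bulk of the work to the construction in part~(iii).  For~(i) I pass to an $H^s$ completion $\cP^s$ of $\cP$, with $s$ large enough that $\cP^s$ is a smooth Hilbert manifold whose elements are at least $C^1$, and to a corresponding Hilbert--Lie group completion $\cG^{s+1}$ acting smoothly on $\cP^s$.  Consider
\[
\Psi:\cG^{s+1}\times B\to\cP^s,\qquad \Psi(g,b):=g^*\iota(b).
\]
Its differential at $(\id,b_0)$ sends $(\xi,\hat b)$ to $\cL_{p_0}\xi+d\iota(b_0)\hat b$, and this is a linear bijection precisely by the infinitesimal slice condition~(\ref{eq:slice}) (using that stability forces $\cL_{p_0}$ to be injective).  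The inverse function theorem then yields, for every smooth $p:(A,a_0)\to(\cP,p_0)$, unique local factors $\Phi:A\to\cG^{s+1}$ and $\phi:A\to B$ with $p(a)=\Phi(a)^*\iota(\phi(a))$; elliptic bootstrapping on $\bar\p_{j,J}v=0$ promotes $\Phi$ to a diffeomorphism-valued smooth map.  Part~(ii) is the openness of~(\ref{eq:slice}): injectivity of $d\iota(b)$ from the finite-dimensional $T_bB$ is open, and transversality amounts to surjectivity of the continuous Fredholm family $(\hat b,\xi)\mapsto d\iota(b)\hat b+\cL_{\iota(b)}\xi$ of constant index, which is open near $b_0$.

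Part~(iii) is the substantive content.  I first verify that the quotient $T_{p_0}\cP/\im \cL_{p_0}$ is a finite-dimensional complex vector space of complex dimension $\sb:=(\sm-3)(1-\sg)+\inner{c_1(TM)}{\sd}+\sn$.  Since $\cD_{j_0,v_0}$ is surjective and $\cL_{p_0}$ is injective (by stability), the associated deformation complex is Fredholm, and its Euler characteristic is obtained by adding the $\sn$ complex dimensions from marked-point tangents, the $3\sg-3$ dimensions of $T_{j_0}\cJ(\Sigma)/\Lie(\Diff_0(\Sigma))$, and the complex index $\sm(1-\sg)+\inner{c_1(TM)}{\sd}$ of $D_{v_0}$; these rearrange to $\sb$.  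Because $\cL_{p_0}j_0\xi=\cI(p_0)\cL_{p_0}\xi$, the orbit direction $\im \cL_{p_0}$ is a complex subspace, and I can pick a complex complement $V\subset T_{p_0}\cP$ isomorphic to $\C^\sb$.

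To realize $V$ as a smooth slice, I impose an $L^2$-orthogonality gauge-fixing condition transverse to the $\cG^{s+1}$-orbits; the implicit function theorem cuts out a smooth local Hilbert submanifold of $\cP^s$ through $p_0$ tangent to $V$, and elliptic regularity for $\bar\p_{j,J}$ places it inside $\cP$ as a smooth Fr\'echet submanifold.  The final and most delicate step is to replace this smooth slice by a holomorphic one.  For this I appeal to Lemma~\ref{le:hol-unfolding}: a smooth $\iota:(B,b_0)\to(\cP,p_0)$ with $d\iota(b_0):T_{b_0}B\cong V$ induces a holomorphic family of maps over $B$ if and only if a $1$-form $\eta$ on $B$ valued in $\Vect(\Sigma)$, of type $(0,1)$ in the sense of~(\ref{eq:jeta1}), solves the coupled equation~(\ref{eq:vortex}).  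At $b_0$, equation~(\ref{eq:vortex}) is satisfied by $\eta(b_0,\cdot)=0$ because $V$ is complex; and the splitting $T_{p_0}\cP=V\oplus \im \cL_{p_0}$ lets me solve for the pair $(\iota,\eta)$ by a further implicit function theorem in which the $V$-component of~(\ref{eq:vortex}) determines $\iota$ and the $\im\cL_{p_0}$-component determines $\eta$ (using injectivity of $\cL_{p_0}$).  Composing $\iota$ with the gauge flow generated by $-\eta$ then yields a holomorphic slice of complex dimension $\sb$.  The principal obstacle I foresee, and the main locus of effort, is the elliptic analysis required to solve this coupled Cauchy--Riemann system smoothly throughout $B$, and to verify that transversality to $\cG$-orbits is propagated from $b_0$ to all of $B$ via part~(ii).
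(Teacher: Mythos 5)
Your treatment of parts (i) and (ii) is essentially the paper's: both pass to Sobolev completions $\cG^s$, $\cP^s$, show the map $(g,b)\mapsto g^*\iota(b)$ has invertible differential precisely when $\iota$ is an infinitesimal slice, apply the inverse function theorem, and bootstrap with elliptic regularity. (The paper is more careful about the loss of regularity: $\cF^s$ is only $C^{s-2}$ because the evaluation map $\cG^s\times\Sigma\to\Sigma$ is not smooth; your ``$\cG^{s+1}$ acting smoothly on $\cP^s$'' hides this but the bookkeeping would work.) The index computation that yields $\dim_\C B=(\sm-3)(1-\sg)+\inner{c_1}{\sd}+\sn$ also agrees.

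Where your argument departs from the paper -- and where it has a real gap -- is the ``holomorphization'' step in part~(iii). You propose to first produce a smooth slice by $L^2$ gauge-fixing and then deform it to a holomorphic one by solving the pair $(\iota,\eta)$ in equation~(\ref{eq:vortex}) via an implicit function theorem, splitting the equation into $V$- and $\im\cL_{p_0}$-components, and then ``composing $\iota$ with the gauge flow generated by $-\eta$''. This does not work as stated. Equation~(\ref{eq:vortex}) is a \emph{first-order PDE} for the unknown map $\iota$ (it involves $d\iota$, not just $\iota$), so one cannot solve it pointwise by an implicit function theorem. The $\im\cL_p$-component does determine $\eta$ pointwise once $\iota$ is given (by injectivity of $\cL_p$), but the $V$-component is a genuine Cauchy--Riemann-type system for $\iota$ itself with no obvious well-posed formulation. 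Moreover, once you have a solution $(\iota,\eta)$ with $\eta\ne 0$, there is no well-defined ``gauge flow generated by $-\eta$'': $\eta$ is a $\Vect(\Sigma)$-valued $1$-form on $B$, not a time-dependent vector field, and integrating it to a gauge transformation requires a path-ordered exponential whose endpoint independence is exactly what you would need to prove.

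The paper avoids all of this by arranging for $\eta\equiv0$ from the start, via a two-step finite-dimensional reduction. First, it chooses a \emph{holomorphic} $\G$-invariant slice $\iota_0:A\to(\Sigma^\sn\setminus\Delta)\times\cJ(\Sigma)$ for the Riemann-surface data alone, where $\G$ is the finite-dimensional identity component of the isotropy group of $(s_{0,*},j_0)$ (trivial if $\sn>2-2\sg$; $\T^2$, $\C^*$, $\C^*\ltimes\C$ or $\PSL(2,\C)$ otherwise). In the stable range this is the Teichm\"uller slice from~\cite[Theorem~8.9]{RS}; for the torus and sphere explicit models are used. Second, it forms the zero set
$$
Z:=\bigl\{(a,v)\in A\times\Cinf(\Sigma,M)\,\big|\,\bar\p_{j(a),J}(v)=0\bigr\},
$$
which by transversality is a \emph{finite-dimensional} smooth manifold (the Cauchy--Riemann operator is Fredholm). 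Because $J$ is integrable and $\iota_0$ is holomorphic, the linearization $\cD_{a,v}$ in~(\ref{eq:Dav}) is complex linear, so every tangent space $T_{(a,v)}Z$ is a complex subspace of $T_aA\times\Omega^0(\Sigma,v^*TM)$. For a finite-dimensional submanifold of a complex manifold this automatically forces $Z$ to be a complex submanifold (the graph of a smooth map between complex vector spaces is a complex submanifold iff the map is holomorphic). Finally, one takes a holomorphic $\G$-slice of the finite-dimensional $\G$-action on $Z$, which is elementary. This route buys you exactly what your approach struggles to achieve: the holomorphic structure is inherited from the ambient complex structures rather than produced by solving a coupled PDE system, and the reduction to finite dimensions is what makes the ``complex tangent spaces $\Rightarrow$ complex submanifold'' implication available. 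Your proposal also does not account for the nontrivial residual group $\G$ in the low-genus/few-marked-points cases; any correct construction must quotient out by this extra symmetry.
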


\begin{proof} 
Choose an integer $s\ge3$ and let $\cG^s$ denote the Sobolev completion
of $\cG$ in the $H^s$ topology and $\cP^s$ denote the Sobolev completion
of $\cP$ in the $H^{s-1}$ topology on $j$ and the $H^s$ topology on $v$.
Then 
$$
\cP^s\subset\Sigma^\sn\times\cJ^{s-1}(\Sigma)\times H^s(\Sigma,M)
$$ 
is a smooth Hilbert submanifold.
Now let $\iota:(B,b_0)\to(\cP,p_0)$ be a smooth infinitesimal slice.  

\medskip\noindent{\bf Claim 1: } {\em The map
$$
B\times\cG^s\to\cP^s:(b,g)\mapsto
\cF^s(b,g) := g^*\iota(b)
$$
is a $C^{s-2}$ map between Hilbert manifolds.
The tangent space of $\cG^s$ at $\phi=\id$ is the space
$H^s(\Sigma,T\Sigma)$ of vector fields of class $H^s$ and 
the differential of $\cF^s$ at the pair $(b,\id)$ is
$$
d\cF^s(b,\id)(\hat b,\xi)
= d\iota(b)\hat b + \cL_{\iota(b)}\xi
$$
for $\hat b\in T_bB$ and $\xi\in H^s(\Sigma,T\Sigma)$.
(See~(\ref{eq:infinitesimal}) for the definition of $\cL_{\iota(b)}$.)}

\medskip\noindent
Denote the value of $\iota(b)$ at a point $x\in\Sigma$ by 
$$
\iota(b)(x)= (\sigma_{1,b},\ldots,\sigma_{\sn,b},j_b(x),v_b(x).
$$
The maps $\sigma_i:B\to\Sigma$, $j:B\times\Sigma\to\End(T\Sigma)$, 
and $v:B\times\Sigma\to M$ are all smooth by hypothesis. 
The map $\cG^s\to\cG^s:g\mapsto g^{-1}$ is smooth. 
Hence the map $B\times\cG^s\to\Sigma:(b,g)\mapsto g^{-1}(\sigma_{i,b})$ is 
as smooth as the evaluation map $\cG^s\times\Sigma\to\Sigma$, 
i.e. it is $C^{s-2}$ by Sobelov.  Moreover, the map $g\mapsto dg$
is smooth as a map from $H^s$ to $H^{s-1}$.
Since $(g^*j_b)(x)=dg(x)^{-1}j_b(g(x))dg(x)$ this shows that the map
$$
B\times\cG^s\to\cJ^{s-1}(\Sigma):(b,g)\mapsto g^*j_b
$$
is smooth. The map $B\times\cG^s\to H^s(\Sigma,M):(b,g)\mapsto v_b\circ g$
is smooth because the map $v:B\times\Sigma\to M$ is smooth. 
This proves  claim~1.

\medskip\noindent{\bf Claim 2:} {\it The operator $d\cF^s(b,\id)$ is bijective 
if and only if $\iota$ is an infinitesimal slice at $b$.}

\medskip\noindent
To see this, assume first that $\iota$ is an infinitesimal slice at $b$.
Then, by elliptic regularity, every element in the 
kernel of $d\cF^s(b,\id)$ is smooth and hence
the operator is injective by~($\dagger$). 
For surjectivity we observe that the image of $d\cF^s(b,\id)$
is closed by the elliptic estimate, that the smooth elements
are dense in $T_{\iota(b)}\cP^s$, and that the smooth elements
of $T_{\iota(b)}\cP^s$ are contained in the image of $d\cF^s(b,\id)$
by~($\ddagger$).  Conversely, if $d\cF^s(b,\id)$ is bijective,
it follows from elliptic regularity that $\iota$ satisfies the 
infinitesimal slice conditions~($\dagger$) and~($\ddagger$)
at $b$.  This proves  claim~2. 

Shrinking $B$ if necessary, we may assume that 
$d\cF^s(b,\id)$ is bijective for every $b\in B$.  By Claim~2
this implies that $\iota$ is an infinitesimal slice at every point 
$b\in B$ and  $d\cF^{s'}(b,\id)$ is bijective for every $b$ and every~$s'$.  
Hence, by equivariance, $d\cF^{s'}(b,g)$ is bijective for 
every integer $s'\ge 2$, every $b\in B$, and every $g\in\cG^{s'}$. 
In particular, we have proved~(ii). 

Now fix an integer $s_0\ge 3$.  Then it follows from the inverse 
function theorem that $\cF^{s_0}$ maps an open $H^{s_0}$ 
neighborhood of $(b_0,\id)$ in $B\times\cG^{s_0}$ by a 
$C^{s_0-2}$-diffeomorphism onto an open neighborhood 
of $p_0$ in $\cP^{s_0}$.  Given a smooth map $p:(A,a_0)\to(\cP,p_0)$ 
choose $A_0\subset A$ to be the preimage of this neighborhood 
of $p_0$ and define the $C^{s_0-2}$ map
$$
A_0\to B\times\cG^{s_0}:a\mapsto(\phi(a),\Phi(a))
$$
by
$$
(\phi(a),\Phi(a)):=(\cF^{s_0})^{-1}(p(a)).
$$
Then
$$
p(a) = \Phi(a)^*\iota(\phi(a))
$$
for every $a\in A_0$.  Since the complex structures on $\Sigma$
associated to $\iota\circ\phi(a)$ and $p(a)$ are smooth it follows 
from elliptic regularity that $\Phi(a)\in\cG$ is smooth for every 
$a\in A_0$. Thus $\Phi(a)\in\cG^s$ and $\cF^s(\phi(a),\Phi(a))=p(a)$ 
for every $a\in A_0$ and every $s$.
Since the differential $d\cF^s(\phi(a),\Phi(a))$ is bijective 
for every $a\in A_0$ and every integer $s\ge 2$, it follows that
the map $a\mapsto(\phi(a),\Phi(a))$ is a $C^{s-2}$ map 
from $A_0$ to $B\times\cG^s$ for every integer $s\ge 3$.
Hence this map is smooth. This proves~(i).

We prove~(iii). 
Fix an element $(s_{0,*},j_0,v_0)\in\cP$.     
Let $\G\subset\cG$ denote the identity component of the
isotropy subgroup of the tuple $(s_{0,*},j_0)$. Thus 
\begin{equation}\label{eq:G}
\G := \left\{\begin{array}{ll}
\{\one\},&\mbox{if }\sn>2-2\sg, \\
\T^2,&\mbox{if }\sg=1,\,\sn=0, \\
\C^*,&\mbox{if }\sg=0,\,\sn=2, \\
\C^*\ltimes\C,&\mbox{if }\sg=0,\,\sn=1, \\
\PSL(2,\C),&\mbox{if }\sg=0,\,\sn=0. 
\end{array}\right.
\end{equation}
First we choose a $\G$-invariant holomorphic map 
$$
\iota_0:A\to(\Sigma^\sn\setminus\Delta)\times\cJ(\Sigma),\qquad
\iota_0(a)=(\sigma_1(a),\dots,\sigma_\sn(a),j(a)),
$$
defined on an open neighborhood $A\subset\C^{3\sg-3+\sn+\dim_\C\G}$
of a point $a_0$, that is transverse to the $\cG$-action and satisfies 
$$
\iota_0(a_0)=(s_{0,*},j_0).
$$
We do this as follows.  In the case $\sn>2-2\sg$ we choose a 
slice in Teichm\"uller space $\cT_{\sg,\sn}$ as in the proof 
of~\cite[Theorem~8.9]{RS}. There are two cases with $\sn\le 2-2\sg$.  
If $\sg=1$ (so $\Sigma\cong\T^2$) and $\sn=0$ 
we take $A=\H$ to be the upper half plane and define 
$\iota_0:A\to\cJ(\Sigma)$ as the standard map to the complex 
structures on the torus (see~\cite[Section~7]{RS}).  
If $\sg=0$ (so $\Sigma\cong S^2$) and $\sn\le 2$ 
we take $A$ to be a point. Note that
\begin{equation}\label{eq:dimAG}
\dim_\C A - \dim_\C\G = 3\sg-3+\sn
\end{equation}
in all cases and that $\G$ is the isotropy group of
each element of the slice, i.e. for $g\in\cG$ and $a\in A$ 
we have $g^*\iota_0(a)=\iota_0(a)$ if and only if $g\in\G$. 

The map $\iota_0$ gives rise to an infinite dimensional
vector bundle 
$$
\cE\to A\times\Cinf(\Sigma,M)
$$
with fibers
$$
\cE_{a,v} := \Omega^{0,1}_{j(a)}(\Sigma,v^*TM).
$$
The Cauchy--Riemann operator defines a section 
\begin{equation}\label{eq:dbarjJ}
A\times\Cinf(\Sigma,M)\to\cE:(a,v)\mapsto\bar\p_{j(a),J}(v)
\end{equation}
whose intrinsic derivative at a point $(a,v)$ is the operator
$$
\cD_{a,v}:T_aA\times\Omega^0(\Sigma,v^*TM)
\to\Omega^{0,1}_{j(a)}(\Sigma,v^*TM)
$$
given by 
\begin{equation}\label{eq:Dav}
\cD_{a,v}(\hat a,\hat v) 
:= \cD_{j(a),v}(dj(a)\hat a,\hat v) 
= D_v\hat v + \frac12J(v)dv\cdot dj(a)\hat a.
\end{equation}
Since the operator $\cD_{j_0,v_0}$ is surjective and $\iota_0$
is an infinitesimal slice, it follows that the section~(\ref{eq:dbarjJ}) 
is transverse to the zero section at $(a_0,v_0)$.  Hence it follows 
from the implicit function theorem in suitable Sobolev completions 
(see e.g.~\cite[Chapter~3]{MS}) that a neighborhood of $(a_0,v_0)$
in the zero set of~(\ref{eq:dbarjJ}) is a smooth submanifold of 
$A\times\Cinf(\Sigma,M)$. It is denoted by
$$
Z := \left\{(a,v)\in A\times\Cinf(\Sigma,M)\,\Big|\,
\bar\p_{j(a),J}(v)=0,\,\sup_{z\in\Sigma}d_M(v(z),v_0(z))<\eps\right\}.
$$
The group $\G$ acts on $Z$.
Since 
$$
\INDEX_\R(D_v) = \sm(2-2\sg) + 2\inner{c_1}{\sd}
$$
by the Riemann--Roch theorem, it follows from~(\ref{eq:dimAG})
that
$$
\dim_\R Z - \dim_\R\G
= (\sm-3)(2-2\sg) + 2\inner{c_1}{\sd} + 2\sn.
$$
Since $\iota$ is holomorphic and $J$ is integrable,
the operator~(\ref{eq:Dav}) is complex linear for all 
$(a,v)\in Z$. This shows that $Z$ is a finite dimensional
submanifold of ${A\times\Cinf(\Sigma,M)}$ whose tangent
space at each point $(a,v)\in Z$ is a complex subspace
of $T_aA\times\Omega^0(\Sigma,v^*TM)$.  
The almost complex structure on any such submanifold 
is integrable, because $\Cinf(\Sigma,M)$ is a complex 
manifold and the graph of a smooth function between 
complex vector spaces is a complex submanifold 
if and only if the function is holomorphic.
With this understood we obtain the desired
infinitesimal slice from a holomorphic slice $B\subset Z$ 
for the $\G$ action.  This proves the theorem.
\end{proof}

\rmk
In the proof of part~(iii) of Theorem~\ref{thm:slice}
one can reduce the case $\sn\le2-2\sg$ with $\G\ne\{\one\}$
to the case $\sn>2-2\sg$ with $\G=\{\one\}$ by a similar
argument as we used in the proof of Theorem~\ref{thm:exists}.
\kmr

\para\label{slice-unfolding}
Let $(s_{0,*},j_0,v_0)\in\cP$, 
$B$ be a manifold with base point $b_0\in B$, and
$$
B\to\cP:b\mapsto\iota(b)
=\left(\sigma_1(b),\dots,\sigma_\sn(b),j(b),v(b)\right)
$$
be a holomorphic map such that
$$
j(b_0)=j_0,\qquad v(b_0)=v_0,\qquad 
\sigma_\si(b_0)=s_{0,\si},\qquad \si=1,\dots,\sn.
$$
Define the unfolding $(\pi_\iota:Q_\iota\to B,S_{\iota,*},H_\iota,b_0)$ by
$$
Q_\iota := B\times\Sigma,\qquad
J_\iota(b,z) := \left(\begin{array}{cc}
      \sqrt{-1} & 0 \\ 0 & j(b)(z)\end{array}\right)
$$
where $\sqrt{-1}$ denotes the complex structure on $B$ and
$$
H_\iota(b,z):=v(b)(z),\qquad
S_{\iota,\si}:=\left\{(b,\sigma_\si(b))\,|\,b\in B\right\},\qquad
\si=1,\dots,\sn.
$$
\arap

\begin{lemma}\label{le:slice}
Let $(\pi_\iota,S_{\iota,*},H_\iota,b_0)$ be the unfolding 
associated to a holomorphic map $\iota:B\to\cP$ 
as in~\ref{slice-unfolding}. Then the following are equivalent.
\begin{description}
\item[(i)]
The unfolding $(\pi_\iota,S_{\iota,*},H_\iota,b_0)$
is infinitesimally universal.
\item[(ii)]
The map $\iota$ is an infinitesimal slice  at $b_0$.
\end{description}
\end{lemma}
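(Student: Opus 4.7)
The plan is to translate each of the two conditions $(\dagger)$ and $(\ddagger)$ defining an infinitesimal slice into a statement about the commutative square~(\ref{eq:XY}), and then invoke Remark~\ref{rmk:regular} to eliminate the remaining regularity obstruction.

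\textbf{Unpacking the unfolding.} Since $Q_\iota = B\times\Sigma$ and the canonical desingularization is $u(z)=(b_0,z)$, the bundle $u^*TQ_\iota$ splits as $\underline{T_{b_0}B}\oplus T\Sigma$, so an element of $\cX_u$ becomes a pair $(\hat b,\hat u)$ with $\hat b\in T_{b_0}B$ constant, $\hat u\in\Omega^0(\Sigma,T\Sigma)$, and $\hat u(s_{0,\si})=d\sigma_\si(b_0)\hat b$, while $\cY_u=\Omega^{0,1}_{j_0}(\Sigma,T\Sigma)$.  A direct linearization of $\bar\p u_t$ along the variation $u_t(z)=(\gamma(t),g_t(z))$ with $\dot\gamma(0)=\hat b$ and $\dot g_0=\hat u$ will show that $D_u(\hat b,\hat u)=0$ is equivalent to $dj(b_0)\hat b+2j_0\bar\p_{j_0}\hat u=0$, that $dH(u)(\hat b,\hat u)=dv(b_0)\hat b+dv_0\cdot\hat u$, and that the horizontal arrow $dH(u)\colon\cY_u\to\cY_v$ becomes $\hat\eta\mapsto dv_0\cdot\hat\eta$.

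\textbf{Matching the slice conditions to the diagram.} With these identifications, condition~$(\dagger)$ reads verbatim as the injectivity of $dH(u)\colon\ker D_u\to\ker D_v$.  For $(\ddagger)$, given $(\hat s_*,\jhat,\hat v)\in T_{p_0}\cP$, first choose $\hat u_1\in\Vect(\Sigma)$ with $\hat u_1(s_{0,\si})=-\hat s_\si$ to reduce to the case $\hat s_*=0$, using the compatibility $D_v(dv_0\hat u_1)=dv_0\bar\p_{j_0}\hat u_1$ to verify that the shifted pair still lies in $\ker\cD_{j_0,v_0}$.  Since $v_0$ is holomorphic, $\frac{1}{2}J(v_0)dv_0\cdot\jhat=\frac{1}{2}dv_0\cdot j_0\jhat=dH(u)\bigl(\frac{1}{2}j_0\jhat\bigr)$, so the correspondence $\jhat\leftrightarrow-\frac{1}{2}j_0\jhat$ is an isomorphism from $\ker\cD_{j_0,v_0}$ onto $\{(\hat\eta,\hat v)\in\cY_u\oplus\cX_v : dH(u)\hat\eta=D_v\hat v\}$; under this correspondence the reduced~$(\ddagger)$ asserts exactly that the map $(D_u,dH(u))\colon\cX_u\to\cY_u\oplus\cX_v$ surjects onto this subspace.

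\textbf{Diagram chase and conclusion.} A short snake-lemma-type argument for the two-step complex
$$
0\to\cX_u\xrightarrow{\delta_0}\cY_u\oplus\cX_v\xrightarrow{\delta_1}\cY_v\to 0,
$$
with $\delta_0(\hat u)=(D_u\hat u,dH(u)\hat u)$ and $\delta_1(\hat\eta,\hat v)=dH(u)\hat\eta-D_v\hat v$, shows that the surjectivity above is equivalent to the conjunction of (a)~surjectivity of $dH(u)\colon\ker D_u\to\ker D_v$ and (b)~injectivity of $dH(u)\colon\coker D_u\to\coker D_v$.  The remaining piece---surjectivity of $dH(u)\colon\coker D_u\to\coker D_v$---is precisely regularity of $(\Sigma,s_{0,*},j_0,v_0)$ by Remark~\ref{rmk:regular}, and this holds automatically because $\cD_{j_0,v_0}$ is surjective by the very definition of $\cP$.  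Combining, $(\dagger)+(\ddagger)$ is equivalent to the bijectivity of both maps in~(\ref{eq:dH}), i.e., to infinitesimal universality of the unfolding.  The main obstacle is the bookkeeping of the two parallel identifications between $T_{j_0}\cJ(\Sigma)$ and $\Omega^{0,1}_{j_0}(\Sigma,T\Sigma)$ needed to align $\cD_{j_0,v_0}$ with $D_v$ together with $dH(u)|_{\cY_u}$, and the slice data with the square~(\ref{eq:XY}).
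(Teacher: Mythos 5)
Your proof is correct and follows essentially the same path as the paper's: unpack $\cX_u$, $\cY_u$, $D_{u_0}$ explicitly, identify $(\dagger)$ with injectivity of $\ker D_{u_0}\to\ker D_{v_0}$, identify $(\ddagger)$ (after the reduction to $\hat s_*=0$) with the combination of surjectivity on kernels and injectivity on cokernels, and observe that surjectivity on cokernels is automatic from regularity, which holds by definition of $\cP$ via Remark~\ref{rmk:regular}. Your mapping-cone/three-term-complex reformulation of the $(\ddagger)$ equivalence is a cleaner way to organize the diagram chase that the paper carries out element by element, but the content is the same.
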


\begin{proof}
Let $u_0:(\Sigma,j_0)\to Q_\iota$ be the holomorphic embedding
$$
u_0(z):=(b_0,z)
$$
so that $H_\iota\circ u_0=v_0$.
Then the operator $D_{u_0}$ has domain
$$
\cX_u:=\left\{(\hat u,\hat b)
\in\Omega^0(\Sigma,T\Sigma)\times T_{b_0}B\,|\,
\hat u(s_{0,\si})=d\sigma_\si(b_0)\hat b\right\},
$$
target space $\cY_u:=\Omega^{0,1}_{j_0}(\Sigma,T\Sigma)$,
and is given by
$$
D_{u_0}(\hat u,\hat b) 
= \dbar_{j_0}\hat u -\frac12 j_0dj(b_0)\hat b.
$$
The linearized operator in~\ref{Dv} is
$$
D_{v_0}:\cX_v\to\cY_v,\qquad
\cX_v:=\Omega^0(\Sigma,v_0^*TM),\qquad
\cY_v:=\Omega^{0,1}(\Sigma,v_0^*TM).
$$
The homomorphisms
\begin{equation}\label{eq:kercoker}
\ker D_{u_0}\to\ker D_{v_0},\qquad
\coker D_{u_0}\to\coker D_{v_0}
\end{equation}
are induced by the maps
$$
\cX_u\to\cX_v:(\hat u,\hat b)
\mapsto dv_0\cdot\hat u + dv(b_0)\hat b,\qquad
\cY_u\to\cY_v:\eta\mapsto dv_0\cdot \eta. 
$$
We must prove that the maps in~(\ref{eq:kercoker}) are
isomorphisms if and only if~(ii) holds. 
Note that the second 
map in~(\ref{eq:kercoker}) is necessarily surjective
because $(\Sigma,s_{0,*},j_0,v_0)$ is a regular stable map. 

\smallbreak

We prove that~(ii) implies~(i). 
We prove that  the first map in~(\ref{eq:kercoker})
is bijective. Let $(\hat u,\hat b)\in\ker D_{u_0}$ and 
assume that its image in $\ker D_{v_0}$ vanishes.  Then 
$$
\dbar_{j_0}\hat u - \frac12 j_0dj(b_0)\hat b=0,\qquad
dv_0\cdot \hat u + dv(b_0)\hat b = 0.
$$
Since $(\hat u,\hat b)\in\cX_u$, we have $d\sigma_\si(b_0)=\hat u(s_{0,\si})$
for $\si=1,\dots,\sn$ and hence, by~(ii) and~($\dagger$) in~\ref{slice},  
$\hat b=0$ and $\hat u=0$. Thus we have proved that the 
homomorphism $\ker D_{u_0}\to \ker D_{v_0}$ is injective.
Next we prove that  this map is surjective. 
Let $\hat v\in\Omega^0(\Sigma,v_0^*TM)$ be a vector 
field along $v_0$ such that ${D_{v_0}\hat v=0}$. 
Then the tuple $(\hat s_1,\dots\hat s_\sn,\jhat,\hat v)$
with $\hat s_\si=0$ and $\jhat=0$ satisfies~(\ref{eq:TP}).
Hence, by~(ii) and~($\ddagger$) in~\ref{slice}, 
there is a pair $(\hat u,\hat b)$ such that 
$$
d\sigma_\si(b_0)\hat a - \hat u(s_{0,\si}) =0,\qquad
dj(b_0)\hat b + 2j_0\dbar_{j_0}\hat u = 0,\qquad
dv(b_0)\hat b + dv_0\cdot\hat u = \hat v.
$$
This implies
$$
\dbar_{j_0}\hat u - \frac{1}{2}j_0dj(b_0)\hat b =0,\qquad
\hat v = dv(b_0)\hat b + dv_0\cdot\hat u
$$
and so $\hat v$ belongs to the image of the map 
$\ker D_{u_0}\to \ker D_{v_0}$.  This shows that 
the first map in~(\ref{eq:kercoker}) is an isomorphism. 

Next we prove that the second map in~(\ref{eq:kercoker}) 
is bijective. Let $\eta\in\cY_u$ such that
$dv_0\cdot\eta\in\mathrm{im}\,D_{v_0}$ and choose 
$\hat v\in\Omega^0(\Sigma,v_0^*TM)$ such that 
$$
dv_0\cdot\eta + D_{v_0}\hat v = 0.
$$
Then $\hat v$ and $\jhat:=-2j_0\eta$ satisfy~(\ref{eq:TP}). 
Hence, by~(ii) and~($\ddagger$) in~\ref{slice}, 
there is a pair $(\hat u,\hat b)$ such that 
$$
d\sigma_\si(b_0)\hat b - \hat u(s_{0,\si}) =0,\qquad
dj(b_0)\hat b + 2j_0\dbar_{j_0}\hat u = \jhat,\qquad
dv(b_0)\hat b + dv_0\cdot\hat u = \hat v. 
$$
This implies
$$
(\hat u,\hat b)\in\cX_u,\qquad
D_{u_0}(\hat u,\hat b)=-\frac12j_0\jhat=-\eta,
$$
and hence $\eta\in\mathrm{im}D_{u_0}$.
This shows that the second map in~(\ref{eq:kercoker}) 
is injective and, since we have already proved surjectivity,
it is an isomorphism. Thus we have proved that~(ii) implies~(i).

We prove that~(i) implies~(ii). 
Assume that the maps in~(\ref{eq:kercoker})
are bijective. If $\hat u$ and $\hat b$
satisfy~(\ref{eq:slice1}) then $(\hat u,\hat b)\in\cX_u$, 
$D_{u_0}(\hat u,\hat b)=0$, and the image of $(\hat u,\hat b)$
under the homomorphism $\cX_u\to\cX_v$ vanishes.
Since the first map in~(\ref{eq:kercoker}) is injective,
this implies $\hat u=0$ and $\hat b=0$. 
Now suppose that $\jhat$ and $\hat v$ 
satisfy~(\ref{eq:TP}) with $v=v_0$, i.e. 
$$
0=D_{v_0}\hat v + \frac12J(v_0)dv_0\circ\jhat 
= D_{v_0}\hat v + dv_0\circ\eta,\qquad \eta := \frac12j_0\jhat.
$$
Hence
$
dv_0\circ \eta=-D_{v_0}\hat v\in\mathrm{im}D_{v_0}. 
$
Since the second map in~(\ref{eq:kercoker}) is injective
this implies $\eta\in\mathrm{im}D_{u_0}$.  Choose a pair
$(\hat u,\hat b)\in\cX_u$ such that $D_{u_0}(\hat u,\hat b)=-\eta$.
Then $\hat u$ and $\hat b$ satisfy 
$$
d\sigma_\si(b_0)\hat b-\hat u(s_{0,\si})=0,\qquad 
\jhat = -2j_0\eta = 2j_0D_{u_0}(\hat u,\hat b)
= 2j_0\bar\p_{j_0}\hat u + dj(b_0)\hat b.
$$
Hence 
$$
D_{v_0}\hat v = -dv_0\cdot \eta 
=  dv_0\cdot D_{u_0}(\hat u,\hat b)
= D_{v_0}\left(dv_0\cdot\hat u + dv(b_0)\hat b\right).
$$
The last equation follows from the fact that 
and the diagram~(\ref{eq:XY}) in Definition~\ref{def:infuniv} 
commutes, reading $H_\iota(p,z)=v(b)(z)$ for $H$. 
Since the first map in~(\ref{eq:kercoker}) is surjective, there 
exists a pair $(\hat u_0,\hat b_0)\in\ker D_{u_0}$ such that 
$$
\hat v = dv_0\cdot(\hat u+\hat u_0) + dv(b_0)(\hat b+\hat b_0).
$$
Hence the pair $(\hat u+\hat u_0,\hat b+\hat b_0)$ 
satisfies~(\ref{eq:slice2}) with $\hat s_\si=0$.  
In the case $\hat s_\si\ne 0$ choose first a vector field
$\hat u_0\in\Vect(\Sigma)$ such that $-\hat u_0(s_{0,\si})=\hat s_\si$
for $\si=1,\dots,\sn$ and denote
$$
\jhat_1 := \jhat - 2j_0\bar\p_{j_0}\hat u_0,\qquad
\hat v_1 := \hat v - dv_0\cdot \hat u_0.
$$
This pair still satisfies~(\ref{eq:TP}).  Hence, by what we have 
already proved, there exists a pair $(\hat u_1,\hat b_1)$ that
satisfies~(\ref{eq:slice2}) with $(\hat s_\si,\jhat,\hat v)$ 
replaced by $(0,\jhat_1,\hat v_1)$.  Hence the pair 
$\hat u:=\hat u_0+\hat u_1$, $\hat b:=\hat b_1$ satisfies~(\ref{eq:slice2}).
Thus we have proved that~(i) implies~(ii). This completes
the proof of the lemma. 
\end{proof}

\begin{lemma}\label{le:holomorphic}
Fix a regular stable map $(\Sigma,s_{0,*},j_0,v_0)$ and let
$$
B\to\cP:b\mapsto\iota(b)=(\sigma_1(b),\dots,\sigma_\sn(b),j(b),v(b))
$$ 
be a holomorphic infinitesimal slice such that 
$$
\iota(b_0)=(s_{0,*},j_0,v_0).
$$
Let $(\pi_\iota,S_{\iota,*},H_\iota,b_0)$
be the unfolding constructed in~\ref{slice-unfolding}. 
Then every continuously differentiable morphism $(\phi,\Phi)$ from 
$(\pi_A:P\to A,R_*,H_A,a_0)$ to $(\pi_\iota,S_{\iota,*},H_\iota,b_0)$ 
is holomorphic.
\end{lemma}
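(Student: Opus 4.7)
The plan is to use the morphism $\Phi$ itself to trivialize the source family $P\to A$ so that $\Phi$ takes the simple form $\Phi(a,z)=(\phi(a),z)$, and then to derive holomorphicity of $\phi$ together with vanishing of the off-diagonal term of $J_P$ from the vortex equation of Lemma~\ref{le:hol-unfolding} combined with the infinitesimal slice splitting~(\ref{eq:slice}).

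First I would let $u_a:\Sigma\to P_a$ be the inverse of the biholomorphism $\Phi|_{P_a}:P_a\to\{\phi(a)\}\times\Sigma\cong\Sigma$, and use the $C^1$ family $\{u_a\}$ to identify $P$ with $A\times\Sigma$. In these coordinates $\Phi(a,z)=(\phi(a),z)$, the marked sections read $r_\si(a)=\sigma_\si(\phi(a))$, the fiber complex structure is $j(\phi(a))$, and $H_A(a,z)=v(\phi(a))(z)$; equivalently, the stable map arising from the desingularization of $P_a$ is $p_A(a)=\iota(\phi(a))\in\cP$. This identification is a priori only $C^1$, but starting instead from any smooth trivialization yields a smooth map $p_A^{(0)}:A\to\cP$ together with a $C^1$-decomposition $p_A^{(0)}(a)=g(a)^*\iota(\phi(a))$ extracted from $\Phi$; since the factorization of a smooth map through the slice $\iota$ is smooth and unique by Theorem~\ref{thm:slice}(i), the maps $g$ and $\phi$ are in fact smooth, and therefore so is $\Phi$.

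With smoothness in hand, the complex structure $J_P$ on $A\times\Sigma$ has the normal form~(\ref{eq:JP}) for some 1-form $\eta:TA\to\Vect(\Sigma)$, and applying Lemma~\ref{le:hol-unfolding} to the holomorphic family $(\pi_A,R_*,H_A)$ yields the vortex equation
$$dp_A(a)\hat a+\cI(p_A(a))\,dp_A(a)\sqrt{-1}\hat a-\cL_{p_A(a)}\eta(a,\sqrt{-1}\hat a)=0.$$
Substituting $p_A=\iota\circ\phi$ and using that $\iota$ is holomorphic (so $d\iota\circ\sqrt{-1}=\cI(\iota)\circ d\iota$) rewrites this as
$$d\iota(\phi(a))\bigl[d\phi(a)\hat a+\sqrt{-1}\,d\phi(a)\sqrt{-1}\hat a\bigr]=\cL_{\iota(\phi(a))}\,\eta(a,\sqrt{-1}\hat a).$$
By Theorem~\ref{thm:slice}(ii) the map $\iota$ remains an infinitesimal slice at $\phi(a)$ for $a$ near $a_0$, so the direct sum decomposition~(\ref{eq:slice}) applies: the left-hand side lies in $\im\,d\iota(\phi(a))$ and the right-hand side in $T_{\iota(\phi(a))}\cG^*\iota(\phi(a))$, forcing both sides to vanish.

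Vanishing of the left-hand side and injectivity of $d\iota$ give $d\phi(a)\sqrt{-1}\hat a=\sqrt{-1}\,d\phi(a)\hat a$, so $\phi$ is holomorphic. Vanishing of the right-hand side, together with the fact that the infinitesimal stabilizer of the stable map $p_A(a)$ is trivial, gives $\eta(a,\sqrt{-1}\hat a)=0$; equation~(\ref{eq:jeta1}) then forces $\eta\equiv 0$ because $j(\phi(a))$ is fiberwise invertible. With $\phi$ holomorphic and $\eta=0$, the complex structure $J_P$ is precisely $\Phi^*J_\iota$, so $\Phi$ is holomorphic as claimed. I expect the main technical hurdle to be arranging the trivialization so that $p_A=\iota\circ\phi$; once this is set up, the vortex equation splits cleanly into two transverse summands which must both vanish.
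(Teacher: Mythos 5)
Your proof follows the same core argument as the paper: trivialize $P$ via $\Phi$ so that the induced stable map on the fiber is $\iota\circ\phi$, apply Lemma~\ref{le:hol-unfolding} to obtain the vortex equation, substitute $p=\iota\circ\phi$ using holomorphicity of $\iota$, and then use the infinitesimal slice decomposition~(\ref{eq:slice}) (and triviality of $\ker\cL_p$ from stability) to force both summands to vanish, giving $\eta\equiv 0$ and $\phi$ holomorphic.

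The one point at which you go beyond the paper's exposition is the opening smoothness upgrade: the paper writes ``choose a smooth trivialization $\tau$'' but then needs the induced stable map to be exactly $\iota\circ\phi$, which forces $\tau_a=(\Phi|P_a)^{-1}\circ u_{\phi(a)}$ and hence pins $\tau$ down to the same a priori regularity as $\Phi$, namely $C^1$. Your fix---start from a genuinely smooth trivialization, extract from $\Phi$ a $C^1$ factorization $p_A^{(0)}(a)=g(a)^*\iota(\phi(a))$, and invoke the uniqueness clause of Theorem~\ref{thm:slice}(i) (which, via the inverse function theorem for $\cF^{s_0}$ in the proof, gives uniqueness among all continuous factorizations in a neighborhood of $(b_0,\id)$, not merely among smooth ones) to conclude that $g$, $\phi$, and hence $\Phi$ are smooth before applying Lemma~\ref{le:hol-unfolding}---is exactly what is needed to make the argument airtight, since Lemma~\ref{le:hol-unfolding} is stated for smooth data and the integrability condition driving the vortex equation would otherwise have to be interpreted for a merely $C^0$ almost complex structure. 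You also correctly invoke Theorem~\ref{thm:slice}(ii) to keep the slice splitting valid at $\iota(\phi(a))$ for $a\ne a_0$, a point the paper leaves implicit. In short: same route, but with the regularity bootstrapping spelled out.
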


\begin{proof}
Choose a smooth trivialization 
$$
A\times\Sigma\to P:(a,z)\mapsto\tau(a,z)=\tau_a(z)
$$
so that $\tau_a:\Sigma\to P_a$ is a desingularization
(with no singularities) for every $a\in A$. The stable map 
on $\Sigma$, induced by $\tau_a$, is the tuple 
$$
p(a) := \iota\circ\phi(a) 
= \bigl(\sigma_1(\phi(a)),\dots,\sigma_\sn(\phi(a)),j(\phi(a)),v(\phi(a))\bigr)
\in \cP. 
$$
The complex structure on $A\times\Sigma$ induced by $\tau$ 
has the form
$$
(\hat a,\hat z)\mapsto\left(\sqrt{-1}\hat a, 
j(\phi(a))(z)\hat z+\eta(a,\hat a)(z)\right)
$$
for a suitable $1$-form $T_aA\to\Vect(\Sigma):\hat a\mapsto\eta(a,\hat a)$.
Since this complex structure is integrable, the map
$H_A\circ\tau:A\times\Sigma\to M$ is holomorphic, and
$\tau^{-1}(R_\si)$ is a complex submanifold of $A\times\Sigma$
for every $\si$, it follows from Lemma~\ref{le:hol-unfolding} that 
$$
dp(a)\hat a + \cI(p(a))dp(a)\sqrt{-1}\hat a - \cL_{p(a)}\eta(a,\sqrt{-1}\hat a) = 0
$$
for every $a\in A$ and every $\hat a\in T_aA$.  
Since $p=\iota\circ\phi$ and $\iota$ is holomorphic, this implies
$$
d\iota(\phi(a))
\left(d\phi(a)\hat a + \sqrt{-1}d\phi(a)\sqrt{-1}\hat a\right)
= \cL_{\iota(\phi(a))}\eta(a,\sqrt{-1}\hat a)
$$
for all $a$ and $\hat a$.  Since $\iota$ is a slice this implies 
that $\eta\equiv0$ and $\phi$ is holomorphic.  Hence $\Phi$
is holomorphic as well and this proves the lemma. 
\end{proof}

\begin{theorem}\label{thm:nonodes}
Theorems~\ref{thm:exists}, \ref{thm:universal}, and~\ref{thm:stable}
hold for regular stable maps without nodes.  Moreover, 
if $(\pi_B:Q\to B,S_*,H_B,b_0)$ is any universal unfolding without 
nodes and $(\phi,\Phi)$ is a continuously differentiable 
morphism from ${(\pi_A:P\to A,R_*,H_A,a_0)}$ to $(\pi_B,S_*,H_B,b_0)$ 
then $\phi$ and $\Phi$ are holomorphic. 
\end{theorem}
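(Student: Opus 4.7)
The plan is to prove all four assertions of Theorem~\ref{thm:nonodes} simultaneously, by first verifying them for the concrete slice unfoldings of~\ref{slice-unfolding} and then transferring the conclusions to an arbitrary infinitesimally universal unfolding via a local biholomorphism built out of the universality of slices.

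First I would dispose of Theorem~\ref{thm:exists} in the nodeless setting. Given a regular stable map without nodes, Theorem~\ref{thm:slice}(iii) provides a holomorphic infinitesimal slice $\iota:(B,b_0)\to(\cP,p_0)$, and Lemma~\ref{le:slice} then asserts that the associated unfolding $(\pi_\iota,S_{\iota,*},H_\iota,b_0)$ is infinitesimally universal; the ``only if'' direction was already established in the main text. Next I would show that slice unfoldings are universal. Given another unfolding $(\pi_A,R_*,H_A,a_0)$ of the same configuration and a fiber isomorphism $f_0:P_{a_0}\to Q_{\iota,b_0}$, I shrink $A$ and choose a smooth trivialization $\tau:A\times\Sigma\to P$ whose restriction at $a_0$ realizes $f_0$. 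Pulling back the marked points, the fiber complex structures, and the map $H_A$ produces a smooth map $p:A\to\cP$ with $p(a_0)=p_0=\iota(b_0)$. Theorem~\ref{thm:slice}(i) then yields unique smooth $\phi:(A,a_0)\to(B,b_0)$ and $\Phi_0:(A,a_0)\to(\cG,\id)$ with $p(a)=\Phi_0(a)^*\iota(\phi(a))$; these assemble into a morphism extending $f_0$, and Lemma~\ref{le:holomorphic} upgrades its regularity to holomorphic.

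Now consider a general infinitesimally universal unfolding $(\pi_B,S_*,H_B,b_0)$ without nodes, and pick a slice unfolding $(\pi_\iota,S_{\iota,*},H_\iota,c_0)$ of the same central configuration. Applying the just-established universality of the slice to the identity fiber isomorphism produces a holomorphic morphism $(\psi,\Psi):(\pi_B,S_*,H_B,b_0)\to(\pi_\iota,S_{\iota,*},H_\iota,c_0)$ that restricts to the identity on the central fiber. The crucial step is to prove that $(\psi,\Psi)$ is a local biholomorphism of unfoldings. The commutative diagram~(\ref{eq:XY}) for each side fits into a square
$$
\xymatrix{
\ker D_u \ar[r] \ar[d]_{dH(u)} & \ker D_{u_\iota} \ar[d]^{dH(u_\iota)} \\
\ker D_v \ar@{=}[r] & \ker D_v
}
$$
in which both vertical maps are bijections by infinitesimal universality, whence the top horizontal map is bijective too. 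Combined with the analogous cokernel statement from~(\ref{eq:dH}) and the stability argument at the end of the main proof of Theorem~\ref{thm:exists} (which rules out nontrivial infinitesimal automorphisms of the central stable map and hence excludes spurious vertical directions in either kernel), this forces $d\psi(b_0):T_{b_0}B\to T_{c_0}B_\iota$ to be bijective. The inverse function theorem then turns $(\psi,\Psi)$ into a local biholomorphism.

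With the local biholomorphism in hand the remaining three statements follow routinely. Universality transports along $(\psi,\Psi)$ from the slice side to $(\pi_B,S_*,H_B,b_0)$, proving Theorem~\ref{thm:universal}. For Theorem~\ref{thm:stable}, Theorem~\ref{thm:slice}(ii) combined with Lemma~\ref{le:slice} gives openness of infinitesimal universality on the slice side, and this openness is spread along the local biholomorphism to a neighborhood of $b_0$ in $B$. Finally, the holomorphicity statement follows by composing a continuously differentiable morphism into $(\pi_B,S_*,H_B,b_0)$ with $(\psi,\Psi)$ to obtain a morphism into the slice unfolding, invoking Lemma~\ref{le:holomorphic}, and then composing back with the holomorphic inverse of $(\psi,\Psi)$. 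The main obstacle is the bijectivity of $d\psi(b_0)$ in the local-biholomorphism step, which requires one to carefully match the finite-dimensional tangent data on the two bases using both halves of the infinitesimal universality condition together with stability of the central fiber.
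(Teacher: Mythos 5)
Your overall architecture mirrors the paper's proof: construct a slice unfolding (Theorem~\ref{thm:slice}(iii), Lemma~\ref{le:slice}), prove it universal (Slice Theorem and Lemma~\ref{le:holomorphic}), identify an arbitrary infinitesimally universal unfolding with it, and transfer the remaining conclusions through that identification. Your Steps 1, 2 and the final transfer step coincide with the paper's Steps~1, 2 and~4. The divergence --- and the gap --- lies in the identification step.

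Your argument that $d\psi(b_0)$ is bijective rests on the claim that stability ``rules out nontrivial infinitesimal automorphisms of the central stable map and hence excludes spurious vertical directions in either kernel.'' That inference is not valid. The kernel of the projection $\ker D_u\to T_{b_0}B$ consists of all $\hat u=du_0\cdot\xi$ with $\bar\p_{j_0}\xi=0$ and $\xi(s_{0,\si})=0$, i.e.\ the Lie algebra $\Lie\,\G$ of infinitesimal automorphisms of the \emph{marked Riemann surface} $(\Sigma,s_{0,*},j_0)$, with no condition involving $v_0$. Stability of the \emph{stable map} only kills those $\xi$ that in addition satisfy $dv_0\cdot\xi=0$ --- that is precisely what the argument you cite from the proof of Theorem~\ref{thm:exists} establishes. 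When $\sn\le 2-2\sg$ the group $\G$ of~(\ref{eq:G}) is positive-dimensional, so there are genuinely nonzero vertical elements of $\ker D_u$ (they map by $dH(u)$ to nonzero elements of $\ker D_v$), and your square only shows that $d\psi(b_0)$ restricts to an isomorphism between the \emph{images} of the base projections $\ker D_u\to T_{b_0}B$ and $\ker D_{u_\iota}\to T_{c_0}B_\iota$. One would further need those projections to be surjective, which is false in general (one has to compare the ranges of $D_u$ and of its restriction to the vertical part of $\cX_u$, and these need not agree even for the slice unfolding itself); your argument does not address this.

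The paper's Step~3 sidesteps all of this. Having the holomorphic morphism $(\phi,\Phi)$ from $(\pi_A,R_*,H_A,a_0)$ to the slice unfolding, the composite $p:=\iota\circ\phi:A\to\cP$ is holomorphic and determines a slice unfolding isomorphic to the infinitesimally universal one; by Lemma~\ref{le:slice} this forces $p$ to be an infinitesimal slice at $a_0$. Since both $d\iota(b_0)$ and $dp(a_0)=d\iota(b_0)\circ d\phi(a_0)$ are injective with image a complement of $T_{p_0}\cG^*p_0$ in $T_{p_0}\cP$, and $\im\,dp(a_0)\subset\im\,d\iota(b_0)$ are finite-dimensional complements of the same subspace, they coincide and $d\phi(a_0)$ is bijective. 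You should replace the kernel diagram chase with this slice argument; it does exactly the bookkeeping that your appeal to stability fails to do.
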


\begin{proof} 
{\bf Step 1.}  
{\em Theorem~\ref{thm:exists} holds for stable maps without nodes. }
We proved ``only if''  immediately after the statement of 
Theorem~\ref{thm:exists}; we prove ``if'' here.
Fix a regular stable map $(\Sigma,s_{0,*},j_0,v_0)$, let
$\iota:B\to\cP$ be a holomorphic infinitesimal slice such that 
$
\iota(b_0)=(s_{0,*},j_0,v_0),
$
and let $(\pi_\iota,S_{\iota,*},H_\iota,b_0)$
be the unfolding constructed in~\ref{slice-unfolding}. 
Then it follows from Lemma~\ref{le:slice} that
$(\pi_\iota,S_{\iota,*},H_\iota,b_0)$ is infinitesimally universal. 

{\bf Step 2.}   
{\em The unfolding $(\pi_\iota,S_{\iota,*},H_\iota,b_0)$ is universal.}
Let $(\pi_A,R_*,H_A,a_0)$ be an unfolding
of $(\Sigma,s_{0,*},j_0,v_0)$ and $f_0:P_{a_0}\to Q_{b_0}$
be a fiber isomorphism.  Assume w.l.o.g.~that
$$
P=A\times\Sigma,\qquad f_0(a_0,z)=(b_0,z).
$$
Denote by $p(a)=(r_*(a),j(a),v(a))\in\cP$
the regular stable map on the fiber over $a$ determined by
$(\pi_A,R_*,H_A,a_0)$.  Then 
$$
p(a_0) = (s_{0,*},j_0,v_0) = \iota(b_0). 
$$
Now any two smooth maps $\phi:A\to B$ and 
$\Phi:P\to Q_\iota$ that intertwine the projections 
and satisfy $\Phi|P_{a_0}=f_0$ have the form
$$
\Phi(a,z) = (\phi(a),\Phi_a(z)),
$$
where $A\to\Diff(\Sigma):a\mapsto\Phi_a$ is a smooth map
such that 
$
\Phi_{a_0}=\id.  
$
The pair $(\phi,\Phi)$ is a 
smooth morphism from $(\pi_A,R_*,H_A,a_0)$
to $(\pi_\iota,S_{\iota,*},H_\iota,b_0)$
if and only if 
$$
p(a) = \Phi_a^*\iota(\phi(a))
$$
for every $a\in A$.  Hence the existence and uniqueness 
of smooth morphisms follows from the Theorem~\ref{thm:slice}~(i).
That every smooth morphism is  holomorphic follows from 
Lemma~\ref{le:holomorphic}.  

{\bf Step 3.}   
{\em Every infinitesimally universal unfolding 
of $(\Sigma,s_{0,*},j_0,v_0)$ is isomorphic to 
$(\pi_\iota,S_{\iota,*},H_\iota,b_0)$.}
Let $(\pi_A,R_*,H_A,a_0)$ be an unfolding
and 
$$
f_0:P_{a_0}\to Q_{b_0}
$$
be a fiber isomorphism. By Step~2, there exists 
a holomorphic morphism $(\phi,\Phi)$ from 
$(\pi_A,R_*,H_A,a_0)$ to $(\pi_\iota,S_{\iota,*},H_\iota,b_0)$.  
The map 
$$
p:=\iota\circ\phi:A\to\cP
$$ 
is holomorphic. Since $(\pi_A,R_*,H_A,a_0)$ is infinitesimally 
universal, $p$ is a infinitesimal slice at $a_0$, by Lemma~\ref{le:slice}.
Hence the differential $d\phi(a_0)$ is bijective.  This implies that
$(\phi,\Phi)$ is an isomorphism. 

{\bf Step 4.}   
Since every infinitesimally universal unfolding 
of $(\Sigma,s_{0,*},j_0,v_0)$ is isomorphic to 
$(\pi_\iota,S_{\iota,*},H_\iota,b_0)$ and  $(\pi_\iota,S_{\iota,*},H_\iota,b_0)$
is universal we have proved Theorem~\ref{thm:universal} for 
stable maps without nodes. By
Lemma~\ref{le:slice} and Theorem~\ref{thm:slice}, 
the unfolding $(\pi_\iota,S_{\iota,*},H_\iota,b)$
is infinitesimally universal for $b$ near $b_0$ and hence
Theorem~\ref{thm:stable} holds for stable maps without nodes. 
The `moreover' assertion follows from Lemma~\ref{le:holomorphic}
and Step~3.   This  proves Theorem~\ref{thm:nonodes}.
\end{proof}


\section{Hardy decompositions}\label{sec:Hardy}

This section follows closely Sections~9 and~11 of~\cite{RS}.   
It is convenient to use slightly different notation;
for example $P=N\cup M$ in~\cite{RS} becomes $P=P'\cup P''$
and the open sets $U,V\subset Q$ in~\cite{RS} are replaced by $U',U''$. 
With these changes we review the notation from~\cite{RS}.

\para\label{AB}
Throughout this section
$$
(\pi_A:P\to A,R_*,H_A,a_0),\qquad (\pi_B:Q\to B,S_*,H_B,b_0)
$$  
are unfoldings of maps,
$$
f_0: P_{a_0}\to Q_{b_0}
$$
is a fiber isomorphism, and $p_1,p_2,\ldots,p_\sk$ are
the nodal points of the central fiber~$P_{a_0}$,
so $q_\si:=f_0(p_\si)$ (for $\si=1,\ldots,\sk$)
are the nodal points of the central fiber~$Q_{b_0}$.
As in~\cite{RS} we denote by $C_A\subset P$ and 
$C_B\subset Q$ the critical points of $\pi_A$ and $\pi_B$, 
respectively. 
\arap

\para\label{UQ}
Let $U'\subset Q$ be an open neighborhood of $C_B$ 
equipped with nodal coordinates.  This means
$$
    U'=U'_1\cup\cdots\cup U'_\sk
$$
where the sets $U'_\si$ have pairwise disjoint closures,
each $U'_\si$ is a connected neighborhood of one of the components
of $C_B$, and for $\si=1,\ldots,\sk$ there is a holomorphic coordinate
system
$$
(\zeta_\si,\tau_\si):B\to \C\times\C^{\sb-1}  
$$
and holomorphic functions $\xi_\si,\eta_\si:U'_\si\to\C$ such that
$$
(\xi_\si,\eta_\si,\tau_\si\circ\pi_B):U'_\si\to\C\times\C\times\C^{\sb-1}
$$
is a holomorphic coordinate system and $\xi_\si\eta_\si=\zeta_\si\circ\pi_B$.
Assume that $\bar U'\cap S_*=\emptyset$.
Let $U''\subset Q$ be an open set such that
$$
Q=U'\cup U'', \qquad \bar U''\cap C_B=\emptyset,
$$
and $U_\si'\cap U''$ intersects each fiber $Q_b$ in two open annuli
with $|\xi_\si|>|\eta_\si|$ on one component and $|\xi_\si|<|\eta_\si|$ on the other.
Introduce the abbreviations
$$
U:=U'\cap U'', \quad U_\si:=U_\si'\cap U'', \quad U_{\si,1}:=\{|\xi_\si|>|\eta_\si|\},
\quad U_{\si,2}:=\{|\xi_\si|<|\eta_\si|\},
$$
$$
U'_b:= U'\cap Q_b, \qquad  U''_b:=U''\cap Q_b, \qquad U_b:=U\cap Q_b.
$$
\arap

\para\label{Hardy}
As in~\cite{RS} we use a Hardy decomposition
$$
P=P'\cup P'', \qquad \p P'=\p P'' = P'\cap P'',
$$
for $(\pi_A,R_*,a_0)$. Thus
$P'$ and $P''$ are submanifolds of $P$
intersecting in their common  boundary and
$$
P'=P'_1\cup\cdots \cup P'_\sk,
$$
where $P'_\si$ is a closed neighborhood of $p_\si$
disjoint from the elements of $R_*$,
 the $P'_\si$ are pairwise disjoint,
and each $P'_\si$  is the domain of a nodal coordinate system.
The latter consists of three  holomorphic maps
$$
(x_\si,y_\si):P'_\si\to\D^2,\qquad z_\si: A \to\C,\qquad t_\si:A\to\C^{\sa-1},
$$
such that each map
$$
A\to   \D\times\C^{\sa-1}:a\mapsto(z_\si(a),t_\si(a))
$$
is a holomorphic coordinate system, each  map
$$
P'_\si\to \D^2\times \C^{\sa-1}:
p\mapsto \bigl(x_\si(p),y_\si(p),t_\si(\pi_A(p))\bigr)
$$
is a holomorphic coordinate system, and
$$
x_\si(p_\si)=y_\si(p_\si)=0,\qquad    z_\si\circ\pi_A =x_\si y_\si.
$$
Restricting to a fiber gives a decomposition
$$
P_a=P'_a\cup P''_a, \qquad
P'_a:=P'\cap P_a, \qquad
P''_a:=P''\cap P_a,
$$
where $P''_a$ is a Riemann surface with boundary
and each component of $P'_a$ is either
a closed annulus or a pair of transverse closed disks.
Abbreviate
$$
\Gamma_a:=P'_a\cap P''_a = \p P'_a = \p P''_a.
$$
The nodal coordinate system determines a trivialization
\begin{equation}\label{eq:trivialize}
\iota:A\times\Gamma\to\p P',\qquad 
\Gamma:=\bigcup_{\si=1}^\sk\{(\si,1),(\si,2)\}\times S^1,
\end{equation}
given by 
$$
\begin{array}{ll}
\iota^{-1}(p):=(\pi_A(p),(\si,1),x_\si(p)),&\qquad p\in\p_1P'_\si:=\{|x_\si|=1\}, \\
\iota^{-1}(q):=(\pi_A(p),(\si,2),y_\si(q)),&\qquad q\in\p_2P'_\si:=\{|y_\si|=1\}.
\end{array}
$$
For $a\in A$ and $\si=1,\dots,\sk$
define $\iota_a:\Gamma\to\Gamma_a$ by
$\iota_a(\lambda):=\iota(a,\lambda)$ and denote
$$
\p_{\si,1}P'_a:=\p_1P'_\si\cap P_a,\qquad
\p_{\si,2}P'_a:=\p_2P'_\si\cap P_a,\qquad
P'_{a,\si}:=P'_a\cap P'_\si.
$$
\arap

\begin{figure}[htp] 
\centering  
\includegraphics[scale=0.6]{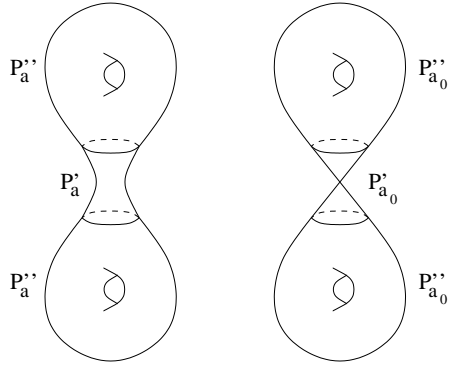} 
\caption{{A Hardy decomposition of $P$.}}\label{fig:hardy} 
\end{figure} 

\para\label{f=id}
Lemma~11.3 in~\cite{RS} asserts that, 
after shrinking $A$ and $B$ if necessary,
there is a Hardy decomposition $P=P'\cup P''$ as
in~\ref{Hardy} and there are open
subsets $U'=U'_1\cup\cdots\cup U'_\sk$, $U''$, $U$ of $Q$
and functions $\xi_\si,\eta_\si,\zeta_\si,\tau_\si$ as described in~\ref{UQ}
such that
$$
f_0(P'_{a_0})\subset U'_{b_0},\qquad f_0(P''_{a_0})\subset U''_{b_0},
$$
$$
\xi_\si\circ f_0\circ x_\si^{-1}(x,0,0)=x,\qquad
\eta_\si\circ f_0\circ y_\si^{-1}(0,y,0)=y
$$
for $x,y\in\D$. Fix a Hardy decomposition
$P=P'\cup P''$ for $(\pi_A,R_*,a_0)$,
open subsets $U'=U'_1\cup\cdots\cup U'_\sk$, $U''$, $U$ of $Q$,
and functions $\xi_\si,\eta_\si,\zeta_\si,\tau_\si$ as described in~\ref{UQ},
such that these conditions are satisfied.
\arap

\para\label{cU}
Fix an integer $s+1/2>1$.  For $a\in A$ and $b\in B$ define an  open subset
$$
\cU(a,b)\subset H^s(\Gamma_a,U_b)
$$
by the condition that for $\alpha\in H^s(\Gamma_a,U_b)$
we have $\alpha\in\cU(a,b)$ if
$$
\alpha\bigl(\p_{\si,1}P'_a\bigr)\subset U_{\si,1},\qquad
\alpha\bigl(\p_{\si,2}P'_a\bigr)\subset U_{\si,2},
$$
(see~\ref{UQ} for the notation $U_{\si,1}$ and $U_{\si,2}$)
and the curves $\xi_\si\circ \alpha\circ x_\si^{-1}$ and
$\eta_\si\circ \alpha\circ y_\si^{-1}$ from $S^1$ to $\C\setminus0$
both have winding number one about the origin.
\begin{equation*}
\begin{split}
\cU'(a,b)&:=
\left\{\alpha\in\cU(a,b) \,\Bigg|\,
\begin{aligned}
& \exists f'\in \Hol^{s+1/2}(P'_a,U'_b) : \alpha=f'|\Gamma_a\\
& \mbox{and } f'(C_A\cap P_a)=C_B\cap Q_b,
\end{aligned}\right\},  \\ \\
\cU''(a,b)&:=
\left\{\alpha\in\cU(a,b)\,\Bigg|\,
\begin{aligned}
& \exists f''\in \Hol^{s+1/2}(P''_a,U''_b):\alpha=f''|\Gamma_a\\
& \mbox{and } f''(R_*\cap P_a)=S_*\cap Q_b
\end{aligned}\right\}.
\end{split}
\end{equation*}
Here $\Hol^{s+1/2}(X,Y)$ denotes the set of maps
of class $H^{s+1/2}$ from $X$ to $Y$ which are holomorphic
on the interior of $X$. Holomorphicity at a nodal point
is defined as in~\cite[\S11.1]{RS}. 
Note that the function
$f':P'_a\to U'_b$ in the definition of $\cU'(a,b)$ maps the 
boundary $\Gamma_a=\p P'_a$ into $U_b=U'_b\cap U''_b$; 
similarly for $f''$ in the definition of $\cU''(a,b)$. 
Define
$$
\cU_a:=\bigsqcup_{b\in B}\cU(a,b),\qquad
\cU'_a:=\bigsqcup_{b\in B}\cU'(a,b),\;\qquad
\cU''_a:=\bigsqcup_{b\in B}\cU''(a,b),
$$
$$
\cU:=\bigsqcup_{a\in A}\cU_a,\qquad
\cU':=\bigsqcup_{a\in A}\cU'_a,\qquad
\cU'':=\bigsqcup_{a\in A}\cU''_a.
$$
Our notation means that the three formulas
$(a,\alpha,b)\in\cU$, $(\alpha,b)\in\cU_a$,
and $\alpha\in\cU(a,b)$ have the same meaning.
\arap

\para\label{cU0}
We use the nodal coordinate system of~\ref{Hardy} to 
construct an auxiliary Hilbert manifold structure on $\cU$.  
The domains of the maps in this space vary with $a$ 
so we replace them with a constant domain by using 
an appropriate trivialization.   Define an open set
$$
\cU_0\subset \left\{(a,\alpha,b)\in A\times H^s(\Gamma,U)\times B\,|\,
\pi_B\circ\alpha=b\right\}
$$
by the condition that the map
$$
\cU_0\to\cU:(a,\alpha,b)\mapsto(a,\alpha\circ\iota_a^{-1},b)
$$
is a bijection.  In particular $\alpha((\si,1)\times S^1)\subset U_{\si,1}$
and $\alpha((\si,2)\times S^1)\subset U_{\si,2}$ for $(a,\alpha,b)\in\cU_0$.
(By a standard construction $H^s(\Gamma,U)$ is a complex
Hilbert manifold and the subset $\{(a,\alpha,b)\,|\,\pi_B\circ\alpha=b\}$
is a complex Hilbert submanifold of $A\times H^s(\Gamma,U)\times B$.
This is because the map $H^s(\Gamma,U)\to H^s(\Gamma,B)$ 
induced by $\pi_B$ is a holomorphic submersion.
Note that $\cU_0$ is a connected component
of $\{(a,\alpha,b)\,|\,\pi_B\circ\alpha=b\}$ and hence inherits
its Hilbert manifold structure.)
We emphasize that the resulting Hilbert manifold structure
on $\cU$ depends on the choice of the Hardy trivialization.
Two different Hardy trivializations give rise to a homeomorphism
which is of class $C^\ell$ on the dense subset $\cU\cap H^{s+\ell}$.
\arap

\para\label{shrinking}
The fiber isomorphism $f_0:P_{a_0}\to Q_{b_0}$ determines
a point
$$
(a_0,\alpha_0:=f_0|\Gamma_{a_0},b_0)\in\cU;
$$
this point lies in $\cU'\cap\cU''$ as
$$
\alpha_0=f'_0|\Gamma_{a_0}=f''_0|\Gamma_{a_0},
\qquad\mbox{where}   \qquad
f'_0:=f_0|P'_{a_0},\quad f''_0:=f_0|P''_{a_0}.
$$
In the sequel we will denote neighborhoods of $a_0$ in $A$ and
$(a_0,\alpha_0,b_0)$ in $\cU'$, $\cU''$, or $\cU$ by the same letters
$A$, respectively $\cU'$,  $\cU''$, or $\cU$, and signal this with
the text ``shrinking $A$, $\cU'$,  $\cU''$, or $\cU$, if necessary''.
\arap

\begin{lemma}\label{le:fgamma}
For every $(a,\alpha,b)\in\cU'\cap\cU''$
there is a unique fiber isomorphism
$f:P_a\to Q_b$ with $f|\Gamma_a=\alpha$.
\end{lemma}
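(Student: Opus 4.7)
The plan is to define $f:P_a\to Q_b$ by gluing the two given extensions along $\Gamma_a$:
$$
f(p):=\left\{
\begin{aligned}
f'(p),&\quad p\in P'_a,\\
f''(p),&\quad p\in P''_a,
\end{aligned}
\right.
$$
where $f'\in\Hol^{s+1/2}(P'_a,U'_b)$ and $f''\in\Hol^{s+1/2}(P''_a,U''_b)$ are the maps supplied by $(a,\alpha,b)\in\cU'\cap\cU''$. The definition is consistent because $f'|\Gamma_a=\alpha=f''|\Gamma_a$, and Sobolev embedding (for $s+1/2>1$) makes both restrictions continuous. Because $\pi_B|U'_b\equiv b$ and $\pi_B|U''_b\equiv b$, the map $f$ has image in the single fiber $Q_b$; it also maps the marked points $R_*\cap P_a$ to $S_*\cap Q_b$ (from $f''$) and the nodal set $C_A\cap P_a$ to $C_B\cap Q_b$ (from $f'$).

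The main technical step is to upgrade $f$ from holomorphic on $P_a\setminus\Gamma_a$ to holomorphic on all of $P_a$. Since the boundary traces of $f'$ and $f''$ along $\Gamma_a$ coincide, the standard gluing theorem for Sobolev functions across a smooth hypersurface gives $f\in H^{s+1/2}(P_a,Q_b)$. Reading $\bar\p f$ in local charts of $Q_b$ as an $L^2$-section (in local charts on $P_a$ adapted to $\Gamma_a$), one has $\bar\p f=0$ on the open set $P_a\setminus\Gamma_a$; since $\Gamma_a$ has measure zero and $f$ is in $H^1$, the distributional $\bar\p f$ vanishes on all of $P_a$, and elliptic regularity of $\bar\p$ promotes $f$ to a smooth holomorphic map. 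At the nodal points the same conclusion holds using the notion of holomorphicity at nodes from~\cite[\S11.1]{RS}, which is already built into the hypothesis $f'\in\Hol^{s+1/2}$.

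It remains to check that $f$ is actually a fiber isomorphism, i.e.\ a biholomorphism of the marked nodal Riemann surfaces $P_a$ and $Q_b$. For each $\si$, the boundary components $\p_{\si,k}P'_a$ are mapped by $\alpha$ into the annular pieces $U_{\si,k}$ with winding number one (the defining property of $\cU(a,b)$), and $f'$ carries the nodal point(s) of $P'_{a,\si}$ to those of $U'_{\si,b}$. This forces the holomorphic map $f'|P'_{a,\si}:P'_{a,\si}\to U'_{\si,b}$ to be a biholomorphism, for in both the annulus and transverse-disks cases a holomorphic self-correspondence of the model with degree one on each boundary circle is an isomorphism. Similarly the complementary pieces $f''|P''_a\to U''_b$ are biholomorphisms onto their images, and since $Q_b=U'_b\cup U''_b$ with $U'_b\cap U''_b=U_b$, the glued map $f$ is a biholomorphism $P_a\to Q_b$ preserving all marked and nodal structure.

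For uniqueness, suppose $\Tilde f:P_a\to Q_b$ is a second fiber isomorphism with $\Tilde f|\Gamma_a=\alpha$. Then $\Tilde f|P'_a$ and $f|P'_a=f'$ are two holomorphic maps into $Q_b$ agreeing on the smooth boundary curve $\Gamma_a$. Working in local charts on $Q_b$, the Schwarz reflection principle extends each of them across $\Gamma_a$, and the identity theorem for holomorphic functions (applied to the components of their difference) forces agreement on a neighborhood of $\Gamma_a$ in $P'_a$, hence on each connected component of $P'_a$; the same argument on $P''_a$ yields $\Tilde f=f$. I expect the main obstacle to be the verification of holomorphicity across $\Gamma_a$ in a fashion which handles the nodal points of $P'_a$ cleanly, but the boundary-node formulation of $\Hol^{s+1/2}$ from~\cite{RS} appears to take care of precisely this point.
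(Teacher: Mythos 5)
The paper's entire proof is a one-line citation to \cite[Lemma~9.4]{RS}, which is the analogue of this lemma for unfoldings of marked nodal Riemann surfaces (without the map to $M$); since the definitions of $\cU'$ and $\cU''$ in~\ref{cU} do not involve $H_A$ or $H_B$, that lemma applies verbatim. Your proof is an attempt to reconstruct that cited argument, and the overall architecture --- glue $f'$ and $f''$, promote to holomorphic via removability across $\Gamma_a$, check degree one, prove uniqueness by the identity theorem --- is the right shape. The gluing, the removable-singularity step, and the uniqueness argument (maximum principle on each piece of the decomposition, noting every component of $P'_a$ and $P''_a$ meets $\Gamma_a$) are all sound.

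The step that does not hold up is the claim that the degree-one boundary condition forces $f'|P'_{a,\si}:P'_{a,\si}\to U'_{\si,b}$ to be a biholomorphism. Taken literally it cannot be one: $P'_{a,\si}$ is compact with boundary, while $U'_{\si,b}$ is open, so the image is a proper compact subset. Read as ``biholomorphism onto its image,'' the claim still needs an argument, and winding number one on the boundary does not by itself supply injectivity of the holomorphic extension. For a toy version in the transverse-disk case, take $g(z)=z+cz^2$ with $1/2<\abs{c}<1$: then $g(0)=0$, $g(\p\D)\subset\C\setminus\{0\}$ has winding number one about $0$, yet $g'$ vanishes at $z_0=-1/(2c)\in\D$, so $g$ is two-to-one near the critical value and is not injective on $\D$. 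Thus ``winding number one plus node preservation'' does not give piecewise injectivity, and even if each piece were injective the glued map could in principle identify points in different pieces. The correct argument must be global: the glued $f:P_a\to Q_b$ is a proper nonconstant holomorphic map of compact nodal Riemann surfaces of the same type $(\sg,\sn)$ which preserves nodes and marked points (and, using the nodal coordinates and the disjointness of the $U'_\si$, the node correspondence is a bijection with $z_\si(a)=0\iff\zeta_\si(b)=0$); one then computes a degree (e.g.\ by counting preimages of a well-chosen regular value, using that $f(P'_a)\subset U'_b$, $f(P''_a)\subset U''_b$, and the boundary winding numbers, or by a Riemann--Hurwitz/Euler characteristic count) and concludes the degree is one on every component. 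That global count is the content of \cite[Lemma~9.4]{RS}; what you have written asserts its conclusion piecewise without the argument that closes it.
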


\begin{proof}
This follows immediately from~\cite[Lemma~9.4]{RS}.
\end{proof}

\begin{theorem}\label{thm:U}
Fix an integer $s+1/2>4$.
After shrinking $A$, $\cU'$, $\cU''$, $\cU$,
if necessary, the following holds.
\begin{description}
\item[(i)]
For each $a\in A$, $\cU'_a$ and $\cU''_a$ 
are complex submanifolds of~$\cU_a$.
\item[(ii)]
Let $(a,\alpha,b)\in\cU'\cap\cU''$ and $f:P_a\to Q_b$ be the
associated fiber isomorphism with $\alpha=f|\Gamma_a$. 
Let $w:\Sigma\to P_a$ be a desingularization with induced 
structures $j,\nu,s_*,u:=f\circ w$ on~$\Sigma$ and $D_u$ 
be the operator in Definition~\ref{def:infuniv}. 
Then 
$$
\ker D_u\cong
T_{(\alpha,b)}\cU'_a\cap T_{(\alpha,b)}\cU''_a,\qquad
\coker D_u\cong
\frac{T_{(\alpha,b)}\cU_a}
{T_{(\alpha,b)}\cU'_a+T_{(\alpha,b)}\cU''_a}.
$$
\item[(iii)]
$\cU'$ and $\cU''$ are  complex submanifolds of $\cU$.
\item[(iv)]
The projections $\cU\to A$, $\cU'\to A$, $\cU''\to A$ 
are holomorphic submersions.
\end{description}
\end{theorem}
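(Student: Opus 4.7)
The overall strategy is to reduce the theorem to a parameterized version of the Hardy decomposition theorem proved in Sections~9 and~11 of~\cite{RS} for the case $M=\mathrm{pt}$, combined with the fact that $\pi_B:Q\to B$ is a holomorphic submersion away from $C_B$. The trivialization $\iota:A\times\Gamma\to\p P'$ from~\ref{Hardy} puts all the boundary circles on a common model $\Gamma$, so that elements of $\cU$ are literally parameterized families of maps $\Gamma\to Q$; the question is then when these extend holomorphically to $P'_a$ or to $P''_a$.

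For parts (i) and (iv), fix $a\in A$. The condition $(\alpha,b)\in\cU'_a$ asserts that $\alpha$ extends to a holomorphic map $f':P'_a\to U'_b$ sending nodes to nodes, i.e.\ $\xi_\si\circ f'\circ x_\si^{-1}$ and $\eta_\si\circ f'\circ y_\si^{-1}$ are holomorphic on the disk with the correct winding number. Writing $f'=\exp_{f'_0}(\hat f')$ in normal coordinates on $Q$ around a reference map, the holomorphicity condition becomes a nonlinear Cauchy--Riemann equation for $\hat f'\in H^{s+1/2}(P'_a,(f'_0)^*TQ)$; its linearization is the operator $D_{f'_0}$ acting on sections with free boundary values and constrained nodal behavior, which is a surjective Fredholm operator on $P'_a$ by the standard theory of $\bar\p$-boundary value problems on surfaces with boundary. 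The implicit function theorem then produces a local complex submanifold chart for $\cU'_a$, using the projection $H^s(\Gamma,U)\ni\alpha\mapsto(\alpha,\pi_B(\alpha(\cdot)))$ and the fact that $\pi_B$ is a holomorphic submersion on $U'\setminus C_B$ to see that $b$ varies freely with $\alpha$. The argument for $\cU''_a$ is the same, with the nodal condition replaced by the marked-point condition $f''(R_*\cap P_a)=S_*\cap Q_b$. Submersivity of $\cU\to A$, $\cU'\to A$, $\cU''\to A$ follows from the fact that in each chart the $A$-direction is explicitly a factor in the trivialization.

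For part (iii), the nodal coordinates $(x_\si,y_\si,t_\si\circ\pi_A)$ realize $P'_a$ as a family of Riemann surfaces depending holomorphically on $a$, so the solution operator for the linearized $\bar\p$-problem on $P'_a$ varies holomorphically in $a$ (as in~\cite[\S11]{RS}). Shrinking $A$ so that this holomorphic family of Fredholm operators remains surjective, one applies the parametric implicit function theorem to the combined equation (holomorphicity plus the variation in $a$) on $\cU_0\subset A\times H^s(\Gamma,U)\times B$ to obtain that $\cU'$ and $\cU''$ are complex submanifolds of $\cU$.

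Part (ii) is the essential content and the main obstacle. Given $(\alpha,b)\in\cU'_a\cap\cU''_a$ with associated fiber isomorphism $f:P_a\to Q_b$ from Lemma~\ref{le:fgamma}, a desingularization $w:\Sigma\to P_a$ produces $u=f\circ w$ and pulls $f^*TQ$ back to $u^*TQ$. A section $\hat u\in\ker D_u$ satisfies the constant-$d\pi_B$ and marked-point constraints, and its restrictions to $w^{-1}(P'_a)$ and $w^{-1}(P''_a)$ push forward to holomorphic sections of $f^*TQ$ on $P'_a$ and $P''_a$ agreeing on $\Gamma_a$; these restrictions represent elements of $T_{(\alpha,b)}\cU'_a$ and $T_{(\alpha,b)}\cU''_a$ respectively, and the common boundary value lies in the intersection. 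Conversely, any pair of holomorphic extensions with matching boundary values glues, by the linearization of Lemma~\ref{le:fgamma}, to an element of $\ker D_u$; this gives the first isomorphism. For the cokernel, represent $\eta\in\cY_u$ via the Hardy decomposition of $\Sigma$: solve $D_{u'}\hat u'=\eta|_{P'_a}$ and $D_{u''}\hat u''=\eta|_{P''_a}$ separately (possible by fiberwise surjectivity of the linearized boundary value problems on each half), and let $\mu:=\hat u'|_{\Gamma_a}-\hat u''|_{\Gamma_a}\in T_{(\alpha,b)}\cU_a$. Then $\mu\in T_{(\alpha,b)}\cU'_a+T_{(\alpha,b)}\cU''_a$ if and only if $\eta\in\im D_u$, so $\eta\mapsto[\mu]$ descends to the second isomorphism. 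The technical point to check carefully is that the constant-$d\pi_B$ and marked-point constraints built into $\cX_u$ and $\cY_u$ correspond exactly to the defining conditions of $\cU'_a$ and $\cU''_a$ (as opposed to introducing extra Fredholm indices), which requires a bookkeeping argument using the nodal coordinate systems of~\ref{UQ} and~\ref{Hardy} in the spirit of~\cite[\S11]{RS}.
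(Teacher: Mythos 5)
Your proposal follows the same route as the paper, which disposes of this theorem by directly citing \cite[Theorems~9.5 and~11.9]{RS}: the spaces $\cU$, $\cU'$, $\cU''$ here are word-for-word the same objects as in~\cite{RS} (they do not mention $M$ at all), so the theorem is exactly~\cite{RS}'s Hardy-decomposition theorem restated, and your sketch is in substance a reconstruction of that proof.

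One point where the framing in your sketch would mislead a careful reader is part~(iii) for $\cU'$. You treat it as a holomorphic family of $\bar\p$-boundary-value problems on $P'_a$ to which a parametric implicit function theorem applies. But as $a$ varies through the discriminant $z_\si(a)=0$, the piece $P'_{a,\si}$ degenerates from an annulus to a pair of transverse disks, so the target of the nonlinear operator is not a fixed Hilbert bundle over $A$ — there is no naive ``family of Fredholm operators'' parameterized by $a$. The device that actually makes $\cU'$ a submanifold of $\cU$ near such an $a$ (and this is what both~\cite[\S9,\,\S11]{RS} and the analogous Lemma~\ref{le:localmodel} in the present paper do) is to abandon the surface-with-boundary picture entirely near the nodes and work directly with the Fourier-coefficient description of holomorphic maps out of $N_z$: the submanifold $\cN$ of triples $(z,\xi,\eta)$ of Lemma~\ref{le:localmodel} is written down explicitly via the operator $\cT_z$ on Hardy spaces, and this explicit formula is what shows holomorphic dependence on $z$ across $z=0$. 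Your sketch would be repaired by replacing ``parametric implicit function theorem'' with an invocation of this explicit local model, as you already do implicitly by pointing to~\cite[\S11]{RS}. The rest — the reduction of~(i),~(iv) to fiberwise $\bar\p$-problems, and the kernel/cokernel identification in~(ii) via boundary restriction and the $\xi'-\xi''$ trick — matches the argument in~\cite{RS} and in the proof of Theorem~\ref{thm:V}~(ii) here.
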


\begin{proof}
Theorems~9.5 and~11.9 in~\cite{RS}.
The condition $s+1/2>4$ is used in compactness arguments
for the proofs of~(i) and~(iii). These compactness arguments
can be eliminated by modifying the definition of $\cU''$ along
the lines of the definition of $\cV''$ in~\ref{cV} below.
\end{proof}

\para\label{hardyTrivialization}
As in~\cite[Definition~11.6]{RS}, we use a Hardy trivialization for  
$(\pi_A:P\to A,R_*,a_0)$, i.e. a triple $(P'\cup P'',\iota,\rho)$ 
where $P=P'\cup P''$ is a Hardy decomposition with 
corresponding trivialization $\iota:A\times\Gamma\to \p P'$
as in~\ref{Hardy} and
$$
\rho:P''\to P''_{a_0}=:\Omega
$$
is a trivialization such that $\rho_a:=\rho|P''_a:P''_a\to\Omega$ 
is a diffeomorphism satisfying
$$
\rho_{a_0} = \id,\qquad
\rho_a\circ\iota_a=\iota_{a_0}
$$
for $a\in A$.  We require further that $\rho$ is holomorphic 
in a neighborhood of the boundary. 
\arap

\para\label{cV}
Let $(\pi_A:P\to A,R_*,a_0)$ be an unfolding of marked nodal 
Riemann surfaces and $h_0:P_{a_0}\to M$ be a holomorphic map. 
Choose a Hardy decomposition $P=P'\cup P''$ as in~\ref{Hardy}
and a Hardy trivialization $\rho_a:P_a\to\Omega$ as 
in~\ref{hardyTrivialization}.
We would like to imitate Theorem~\ref{thm:U}
and define subsets  $\cV'_a,\cV''_a\subset  H^s(\Gamma_a,M)$
of those maps $\beta\in\cV_a$ which extend holomorphically
to $P'_a,P''_a$ respectively, but it is convenient
to restrict the extensions. Let 
$$
V'=V'_1\cup\cdots\cup V'_\sk\subset M
$$ 
be an open neighborhood of the image $h_0(P_{a_0}\cap C_A)$ 
of the nodal set so that each pair $(V'_\si,h_0(p_\si))$ 
is holomorphically diffeomorphic to the open unit ball 
in $\C^\sm$ centered at origin, the closures of the 
sets $V'_\si$ are pairwise disjoint, and 
$$
h_0(P_{a_0}\cap P'_\si) \subset V'_\si.
$$
For $a\in  A$ abbreviate 
$$
\cV_a := H^s(\Gamma_a,M).
$$
Let $\cV'_a\subset\cV_a$ be the subspace of those 
$\beta$ that extend holomorphically to $P_a'$, {i.e.}
$$
\cV'_a:=\left\{\beta\in\cV_a\,\big|\,\exists\,
h'\in\Hol^{s+1/2}(P'_a,M)\,s.t.\,h'(P'_{a,\si})\subset V'_\si
\mbox{ and }
\beta=h'|\Gamma_a\right\}.
$$
Let  $\cW_0$ be a neighborhood of $h_0|\Omega$ 
in $H^{s+1/2}(\Omega,M)$, where $\Omega=P_{a_0}''$ 
as in~\ref{hardyTrivialization}.  Via the trivialization
$\rho_a:P_a''\to \Omega$ this determines an open subset
$$
\cW_a:=\left\{h''\in H^{s+1/2}(P''_a,M)\,\big|\,
h''\circ\rho_a^{-1}\in\cW_0\right\}
$$ 
of $H^{s+1/2}(P''_a,M)$ for $a\in A$. Let 
$$
\cV''_a:=\left\{\beta\in\cV_a\,\big|\,\exists\,
h''\in\cW_a\cap\Hol^{s+1/2}(P''_a,M)\,s.t.\,
\beta=h''|\Gamma_a\right\}
$$
Define 
$$
\cV:=\bigsqcup_{a\in A}\cV_a,\qquad
\cV':=\bigsqcup_{a\in A}\cV'_a,\qquad
\cV'':=\bigsqcup_{a\in A}\cV''_a.
$$
Then every pair $(a,\beta)\in\cV'\cap\cV''$ determines
a holomorphic map $h:P_a\to M$ such that 
$h|\Gamma_a=\beta$. As in~\ref{cU0} we use the 
nodal coordinate system of~\ref{Hardy} 
to construct an auxiliary Hilbert 
manifold structure on $\cV$ via  the bijection
\begin{equation}\label{eq:Vtriv}
\cV\to A\times H^s(\Gamma,M):
(a,\beta)\mapsto (a,\beta\circ\iota_a).
\end{equation}
\arap

\begin{theorem}\label{thm:V}
Continue the notation of~\ref{Hardy}, \ref{hardyTrivialization},
and~\ref{cV}. Fix an integer $s+1/2>1$. After shrinking 
$A$ and $\cW_0$, if necessary, the following holds.
\begin{description}
\item[(i)]
For each $a\in A$, $\cV'_a$ and $\cV''_a$ 
are complex submanifolds of~$\cV_a$.
\item[(ii)]
Let $(a,\beta)\in\cV'\cap\cV''$ and 
$h:P_a\to M$ be the associated holomorphic map
with $\beta=h|\Gamma_a$. 
Let $w:\Sigma\to P_a$ be a desingularization with 
induced structures $s_*$, $\nu$, $j$, $v:=h\circ w$ 
on~$\Sigma$ and $D_v$ be the operator in~\ref{def:infuniv}. 
Then
$$
\ker D_v\cong
T_\beta\cV'_a\cap T_\beta\cV''_a,\qquad
\coker D_v\cong
\frac{T_\beta\cV_a}
{T_\beta\cV'_a+T_\beta\cV''_a}.
$$
\item[(iii)]
$\cV'$ and $\cV''$ are  complex submanifolds of $\cV$.
\item[(iv)]
The projections $\cV\to A$, $\cV'\to A$, $\cV''\to A$ 
are holomorphic submersions.
\end{description}
\end{theorem}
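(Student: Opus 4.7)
The plan is to follow the strategy used to prove Theorem~\ref{thm:U}, namely Theorems~9.5 and~11.9 of~\cite{RS}, replacing maps into the total space $Q$ of the unfolding with holomorphic maps into the fixed target $M$. A genuine simplification over the $\cU$ case is that the target $M$ does not depend on $a$ or $b$; this is what permits the regularity bound $s+1/2>1$ here, in contrast with $s+1/2>4$ in Theorem~\ref{thm:U}, and it matches the remark at the end of the proof of Theorem~\ref{thm:U} that compactness is avoided by the $\cW_a$-restriction built into the definition of~$\cV''$.

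For part~(i) on the $\cV''_a$ side, fix $a\in A$ and view the Cauchy--Riemann operator as a smooth section $\bar\partial_{j,J}:\cW_a\to\cE''_a$ of the Hilbert bundle whose fiber at $h''$ is $\Omega^{0,1}_{j}(P''_a,(h'')^*TM)$. Since $P''_a$ is a Riemann surface with nonempty boundary, the linearization at any $h''$, taken with unconstrained boundary values, is surjective. The implicit function theorem then exhibits $\bar\partial^{-1}(0)\cap\cW_a$ as a complex Hilbert submanifold of $\cW_a$, and the boundary trace $h''\mapsto h''|\Gamma_a$ identifies it with $\cV''_a$ via a bijection (injectivity is unique continuation for holomorphic maps). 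On the $\cV'_a$ side, the components of $P'_a$ are either closed annuli or pairs of transverse disks at a node and the target is restricted to a coordinate ball $V'_\si\cong\D^\sm\subset\C^\sm$, so holomorphic extensions are controlled by the classical Hardy theory of maps from disks or annuli into a ball, giving $\cV'_a$ the structure of a complex submanifold. The delicate uniformity in $a$ across node degeneration is imported directly from~\cite[\S11]{RS}.

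For part~(ii), the tangent space $T_\beta\cV'_a$ consists of those $\hat\beta\in H^s(\Gamma_a,v^*TM)$ that extend to a holomorphic section of $v^*TM$ over $P'_a$, and similarly $T_\beta\cV''_a$ on $P''_a$. Their intersection is the space of boundary values of global holomorphic sections of $v^*TM$ over $\Sigma$, i.e.~of elements of $\ker D_v$. The cokernel identification is a Mayer--Vietoris argument: an element $\eta\in\cY_v$ lies in the image of $D_v$ if and only if it admits compatible primitives on the two pieces, so the obstruction is precisely a class in $T_\beta\cV_a/(T_\beta\cV'_a+T_\beta\cV''_a)$. Parts~(iii) and~(iv) are the parameterized versions: using the Hardy trivialization $\iota$ and the trivialization $\rho$ we move to a common model over $A$, the defining Cauchy--Riemann section and its linearization vary smoothly in $a$ and remain surjective, and the parametric implicit function theorem yields $\cV'$ and $\cV''$ as global complex submanifolds of $\cV$ with projections to $A$ that are holomorphic submersions.

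The main obstacle is the $\cV'$ portion of parts~(i) and~(iii): the topological type of $P'_a$ jumps (annulus versus pair of transverse disks) as the nodal parameter $z_\si(a)$ passes through zero, and uniform Hardy-space estimates across this degeneration are needed. This is precisely the gluing analysis of~\cite[\S9,~\S11]{RS}, which we invoke directly; the argument for $\cV''$ is comparatively clean because $P''_a$ is identified with a fixed $\Omega$ via $\rho_a$ and no degeneration occurs there.
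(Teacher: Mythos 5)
Your overall architecture is sound and matches the paper for the $\cV''$ side (Cauchy--Riemann operator as a section of a Hilbert bundle over $\cW_a$, surjectivity because $P''_a$ has boundary, restriction to $\Gamma_a$ is an injective immersion, then transport over $A$ via the trivialization $\rho$). Part~(ii) as a Mayer--Vietoris argument is also the right idea, though the paper works harder to check well-definedness and surjectivity of the cokernel map, using Emile Borel's theorem together with a density-plus-finite-dimensionality argument that is worth spelling out.

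The genuine gap is in your treatment of $\cV'$ in parts~(iii) and~(iv). You describe it as ``the parametrized version'' handled by ``the parametric implicit function theorem'' and defer the uniformity across node degeneration to \cite[\S9,~\S11]{RS}. Neither move works here. First, there is no fixed Banach manifold over $A$ on which to run an implicit function theorem: the domain $P'_a$ changes topological type across the discriminant (an annulus degenerating to two transverse disks), so the $\cV'$-side cannot be written as the zero set of a Fredholm section over $A\times(\text{fixed Hilbert manifold})$, unlike the $\cV''$-side where the Hardy trivialization $\rho$ supplies exactly such a fixed model $\Omega$. Second, the analysis in \cite{RS} is for the $\cU'$-type problem where one maps a degenerating nodal domain into another degenerating nodal target; here the target $V'_\si\subset M$ is a fixed ball, which is a different local model, and \cite{RS} does not provide it. The paper's actual proof introduces a new ingredient precisely for this purpose: Lemma~\ref{le:localmodel}, an explicit Fourier-series description of the set $\cN$ of boundary traces of holomorphic maps $N_z\to\C^\sm$ as a graph over $\INT(\D)\times H^s_+(S^1,\C^\sm)^2\times\C^\sm$. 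This exhibits $\cN$ as a complex Hilbert submanifold uniformly in $z$, including $z=0$, and gives the submersivity of $\cN\to\INT(\D)$ by inspection of the formula. Parts~(iii) and~(iv) for $\cV'$ then follow by composing the charts $\psi_\si$ with the coordinate $z_\si\circ\pi_A$ and reducing to $\cN$; without Lemma~\ref{le:localmodel} (or an equivalent graph description) your argument does not close.
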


\begin{proof}[Proof of Theorem~\ref{thm:V}~(i) and~(ii).]
In parts~(i) and~(ii) the point $a$ is fixed.  
We introduce the following notation to make 
the proof look more like the proof of~\cite[Theorem~9.5]{RS}.
Use the notation of part~(ii).  Abbreviate
$$
\Sigma' := w^{-1}(P_a'),\qquad
\Sigma'' := w^{-1}(P_a'').
$$
Thus $\Sigma'$ and $\Sigma''$ are submanifolds of $\Sigma$
such that
$$
\Sigma=\Sigma'\cup\Sigma'',\qquad
\p\Sigma'=\p\Sigma''=\Sigma'\cap\Sigma''.
$$
Now $w^{-1}\circ\iota_a$ is a diffeomorphism from
$\Gamma$ in~(\ref{eq:trivialize}) to $\Sigma'\cap\Sigma''$.
To simplify the notation we assume that
$
\Gamma=\Sigma'\cap\Sigma''.
$
The submanifold $\Sigma'$ is a disjoint union
$$
\Sigma'=\Sigma'_1\cup\dots\cup \Sigma'_\sk
$$
where each set $\Sigma'_\si$ is either an embedded
closed annulus  or else the union of two disjoint embedded closed 
disks centered at two equivalent nodal points.  It follows that every 
pair of equivalent nodal points appears in some~$\Sigma'_\si$. 
In case $\Sigma'_\si$ is a disjoint union of two disks, say
$\Sigma'_\si=\Sigma'_{\si,1}\cup\Sigma'_{\si,2}$, choose holomorphic
diffeomorphisms $x_\si:\Sigma'_{\si,1}\to\D$ and $y_\si:\Sigma'_{\si,2}\to\D$
which send the nodal point to $0$.   In case  $\Sigma'_\si$  is an annulus 
choose a holomorphic diffeomorphism  $x_\si:\Sigma'_\si\to\A(\delta_\si,1)$  
and define  $y_\si:\Sigma'_\si\to\A(\delta_\si,1)$  by  $y_\si=\delta_\si/x_\si$.

Let $\cV'_0\subset H^s(\Gamma,M)$ be the subspace of those 
$H^s$-functions $\gamma:\Gamma\to M$ that extend
holomorphically to $H^{s+1/2}$-functions $v':\Sigma'\to M$
which map each pair of equivalent nodal points to the 
same point in $M$ and take  $\Sigma'_\si$ to $V'_\si$. 
Let $\cV''_0\subset H^s(\Gamma,M)$ be the subspace of 
those $H^s$-functions $\gamma:\Gamma\to M$ that extend
holomorphically to $H^{s+1/2}$-functions $v'':\Sigma''\to M$
such that 
$
h'':=v''\circ w^{-1}|P_a''\in\cW_a.
$
In this notation part~(i) asserts that $\cV_0'$ and 
$\cV_0''$ are complex submanifolds of~$H^s(\Gamma,M)$.

We prove that $\cV_0'$ is a complex submanifold of $H^s(\Gamma,M)$.
Choose coordinate charts $\psi_\si:V'_\si\to\C^\sm$ such that
$\psi_\si(H_A(p_\si))=0$ and $\psi_\si(V'_\si)$ is the 
open unit ball in $\C^\sm$ for every $\si$. Define the map 
\begin{equation}\label{eq:V0}
\cV_0'\to(H^s(S^1,\C^\sm))^{2\sk}:
\gamma\mapsto(\xi_1,\eta_1,
\dots,\xi_\sk,\eta_\sk)
\end{equation}
by 
\begin{equation}\label{eq:xietai}
\xi_\si:=\psi_\si\circ\gamma\circ x_\si^{-1},\qquad
\eta_\si:=\psi_\si\circ\gamma\circ y_\si^{-1}.
\end{equation}
The image of~(\ref{eq:V0}) is the set of all tuples
$(\xi_1,\eta_1,\dots,\xi_\sk,\eta_\sk)$ in 
$(H^s(S^1,\C^\sm))^{2\sk}$ that satisfy the following conditions.
\begin{description}
\item[(a)]
The functions $\xi_\si,\eta_\si:S^1\to\C^\sm$ 
take values in the open unit ball.
\item[(b)]
If $\Sigma'_\si$ is the disjoint union of two discs then all negative Fourier 
coefficients of $\xi_\si$ and $\eta_\si$ vanish and the zeroth coefficients agree.
\item[(c)]
If $\Sigma'_\si$ is an annulus then $\gamma_{\si,1}$ extends
holomorphically to an $H^{s+1/2}$ function on the annulus 
$\A(\delta_\si,1)$ and $\eta_\si(y) = \xi_\si(\delta_\si/y)$ 
for every $y\in S^1$.
\end{description}
Conditions~(b) and~(c) define a closed subspace of  
$(H^s(S^1,\C^\sm))^{2\sk}$ and condition~(a)
defines an open set in this subspace. Hence the image
of~(\ref{eq:V0}) is an open set in a Hilbert subspace and
this shows that $\cV_0'$ is a Hilbert submanifold of $H^s(\Gamma,M)$. 

We prove that $\Hol^{s+1/2}(\Sigma'',M)$ is a complex
submanifold of $H^{s+1/2}(\Sigma'',M)$. 
To see this note that the Cauchy--Riemann operator 
$v''\mapsto\bar\p_{j,J}(v'')$ 
defines a holomorphic section of the vector bundle 
$
\cE\to\cB:=H^{s+1/2}(\Sigma'',M)
$
with fibers
$$
\cE_{v''}:=H^{s-1/2}(\Sigma'',
\Lambda^{0,1}T^*\Sigma''\otimes(v'')^*TM)
$$
The intrinsic derivative of this section at a zero $v''$
is the Cauchy--Riemann operator 
$
D_{v''}:T_{v''}\cB\to\cE_{v''}
$
of the holomorphic vector bundle $(v'')^*TM\to\Sigma''$.
Since each component of $\Sigma''$ has nonempty boundary
the operator $D_{v''}$ is surjective; 
a right inverse can be 
constructed from an appropriate Lagrangian boundary condition
(see~\cite[Appendix~C.1.10]{MS}).   
This proves that 
$\Hol^{s+1/2}(\Sigma'',M)$ is a complex submanifold 
of $H^{s+1/2}(\Sigma'',M)$.  

We prove that $\cV_0''$ is a complex submanifold of $H^s(\Gamma,M)$.
The restriction map
$$
\Hol^{s+1/2}(\Sigma'',M)\to\cV_0:v''\mapsto v''|\Gamma
$$
is an injective holomorphic immersion.
That it is holomorphic is obvious, that it is injective follows 
from unique continuation, and that it is an immersion follows 
from the elliptic boundary estimate in~\cite[Theorem~B.4]{RS}.  
It follows that the image of a sufficiently small neighborhood
of $H_A\circ w|\Sigma''$ under the restriction map is 
a complex submanifold of $H^s(\Gamma,M)$; this image is $\cV_0''$. 
This proves~(i).

We prove~(ii).  It follows directly from the definitions that 
there is a map 
$$
\ker\,D_v\to T_\beta\cV_a'\cap T_\beta\cV_a'':
\hat v\mapsto \hat v\circ w^{-1}|\Gamma_a.
$$
As in the proof of Theorem~9.5~(ii) in~\cite{RS} this map is injective 
by unique continuation and is surjective by elliptic regularity.
Now define a map 
$$
\coker\,D_v\to\frac{T_\beta\cV_a}{T_\beta\cV_a'+T_\beta\cV_a''}:
[\eta]\mapsto[\hat\beta]
$$
as follows.  Given $\eta\in\Omega^{0,1}(\Sigma,v^*TM)$ choose two vector 
fields $\xi'$ along $v':=v|\Sigma'$ and $\xi''$ along $v'':=v|\Sigma''$
that satisfy
$$
D_{v'}\xi'=\eta|\Sigma',\qquad D_{v''}\xi''=\eta|\Sigma'',\qquad
\xi'|\Gamma-\xi''|\Gamma = \hat\beta\circ w|\Gamma.
$$
One verifies as in the proof of~\cite[Theorem~9.5~(iii)]{RS}
that this map is well defined and bijective.  That this map
is well defined follows directly from the definitions and that it
is injective uses elliptic regularity.  The proof of surjectivity
is based on the following two assertions. 
\begin{description}
\item[(a)]
Each element in the quotient 
$T_\beta\cV_a/(T_\beta\cV_a'+T_\beta\cV_a'')$
can be represented by a smooth vector field along $\beta$. 
\item[(b)]
For every smooth vector field $\hat\beta$ along $\beta$ there exist 
vector fields $\xi'$ along $v'$ and $\xi''$ along $v''$ such that
$\xi'|\Gamma-\xi''|\Gamma = \hat\beta\circ w|\Gamma$
and the $(0,1)$-form $\eta$ along $v$ defined by 
$\eta|\Sigma':=D_{v'}\xi'$ and $\eta|\Sigma'':=D_{v''}\xi''$
is smooth.
\end{description}
One first proves~(b) by an argument in local coordinates, 
using the construction due to  Emile Borel of a smooth function with
a prescribed Taylor series at a point.  Once~(b) is established 
assertion~(a) follows from the observation that the subspace 
of those elements of the quotient 
$T_\beta\cV_a/(T_\beta\cV_a'+T_\beta\cV_a'')$ that admit 
smooth representatives is both finite dimensional and dense. 
The details are exactly as in the proof of~\cite[Theorem~9.5~(iii)]{RS} 
and will be omitted. Thus we have proved~(ii).  
The proofs of~(iii) and~(iv) are given below after some preparation.
\end{proof}

\para\label{standardNode}
Let $\D\subset\C$ be the closed unit disc.
The \jdef{standard node} is defined as the map
$$
N\to\INT(\D): (x,y)\mapsto xy, \qquad N:=\{(x,y)\in\D\times\D\,|\,\Abs{xy}<1\}.
$$
For $z\in\INT(\D)$ denote
$$
N_z:=\{(x,y)\in\D\times\D\,|\,xy=z\}.
$$
The boundary $\p N_z$ has two components
$$
\p_1N_z:=\{(x,y)\in N_z\,|\,\Abs{x}=1\},\qquad
\p_2N_z:=\{(x,y)\in N_z\,|\,\Abs{y}=1\}
$$
which can be identified with the unit circle $S^1=\p\D\subset\C$ 
via the embeddings $\iota_1,\iota_2:S^1\to N_z$ given by 
$$
\iota_{1,z}(e^{i\theta}) := (e^{i\theta},e^{-i\theta}z),\qquad
\iota_{2,z}(e^{i\theta}) := (e^{-i\theta}z,e^{i\theta}).
$$
We study the set of all triples $(z,\xi,\eta)$
where $z\in\INT(\D)$ and $\xi:S^1\to\C^\sm$,
$\eta:S^1\to\C^\sm$ are the boundary values 
a holomorphic map $v:N_z\to\C^\sm$, namely
$$
\xi := v\circ\iota_{1,z},\qquad
\eta := v\circ\iota_{2,z}.
$$
At $z=0$, the functions  $\xi$ and $\eta$ extend to the
closed unit disk and agree at the origin.
More precisely, fix an integer $s+1/2>1$. 
For $z\in\INT(\D)\setminus 0$ let $\Hol^{s+1/2}(N_z,\C^\sm)$ be the space of
all maps $v:N_z\to\C^\sm$ of class $H^{s+1/2}$ which are holomorphic
in $\INT(N_z)$.  The space $N_0$ consists
of two disks $\D\times 0$ and $0\times\D$ intersecting in $(0,0)$.
In this case let  $\Hol^{s+1/2}(N_0,\C^\sm)$ denote the space of
all continuous maps $v:N_0\to\C^\sm$ such that $v_1:=v|\D\times 0$ 
and $v_2:=v|0\times\D$ are holomorphic in the interior 
and restrict to $H^s$ functions on the boundary. 
In both cases the trace theorem gives rise to a  map
$$
\Hol^{s+1/2}(N_z,\C^\sm)\to H^s(S^1,\C^\sm)\times H^s(S^1,\C^\sm);
v\mapsto(v\circ\iota_{1,z},v\circ\iota_{2,z}).
$$
The norm on $ H^s(S^1,\C^\sm)$ is given by
$$
\left\|\zeta\right\|_s:=\sqrt{\sum_{n\in\Z} (1+|n|)^{2s}|\zeta_n|^2},\qquad
\zeta(e^{i\theta})=\sum_{n\in\Z} \zeta_n e^{in\theta}.
$$
\arap
 
\begin{lemma}\label{le:localmodel}
{\bf (i)} The set 
$$
\cN:= \left\{(z,\xi,\eta)\,\big|\,
\exists\,v\in \Hol^{s+1/2}(N_z,\C^\sm)\,s.t.\,
\xi=v\circ\iota_{1,z},\,\eta=v\circ\iota_{2,z}\right\}.
$$
is a complex submanifold of  
$H^s(S^1,\C^\sm)\times H^s(S^1,\C^\sm)\times\INT(\D)$.

\smallskip\noindent{\bf (ii)}
The projection $\cN\to\INT(\D):(\xi,\eta,z)\mapsto z$
is a surjective submersion. 

\smallskip\noindent{\bf (iii)}
Let $A\subset\INT(\D)\times\C^{\sa-1}$ be an open set and 
$A\to\cN:(z,t)\mapsto(z,\xi_{z,t},\eta_{z,t})$ be a holomorphic
map.  Then the map
$$
H:\left\{(x,y,t)\in\C^{\sa+1}\,|\,x,y\in\INT(\D),\,(xy,t)\in A\right\}\to\C^\sm
$$
well defined by 
$$
H(x,y,t) := \left\{\begin{array}{ll}
\xi_{xy,t}(x),&\mbox{if } y\ne 0,\\   
\eta_{xy,t}(y),&\mbox{if } x\ne 0,\\  
\xi_{0,t}(0)=\eta_{0,t}(0),&\mbox{if }x=y=0,
\end{array}\right.
$$
is holomorphic. 
\end{lemma}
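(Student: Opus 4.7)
The plan is to give an explicit global holomorphic chart on $\cN$ via Fourier coefficients, from which all three assertions follow.  For $z\in\INT(\D)\setminus 0$, any $v\in\Hol^{s+1/2}(N_z,\C^\sm)$ has a Laurent expansion $v(x)=\sum_{n\in\Z}a_n x^n$ on the annulus (regarded as a function of $x$ with $y=z/x$), and the two boundary traces then satisfy $\xi_n=a_n$ and $\eta_n=a_{-n}z^{-n}$ for all $n\in\Z$.  Hence membership $(\xi,\eta,z)\in\cN$ is equivalent to the linear conditions
$$
\xi_{-n}=z^n\eta_n\quad (n\ge 0),\qquad \eta_{-n}=z^n\xi_n\quad (n\ge 1).
$$
When $z=0$ these reduce to the vanishing of all negative Fourier coefficients together with $\xi_0=\eta_0$, which correctly encodes holomorphy on the two disk components of $N_0$ with matching values at the node.

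Now let $H^s_+\subset H^s(S^1,\C^\sm)$ denote the closed subspace of functions with vanishing negative Fourier coefficients, and $H^s_{++}\subset H^s_+$ the subspace with vanishing zeroth coefficient as well.  I would define
$$
\Phi:\INT(\D)\times H^s_+\times H^s_{++}\to H^s(S^1,\C^\sm)^2\times\INT(\D)
$$
by
$$
\Phi(z,\xi^+,\eta^+):=\Bigl(\xi^++\sum_{n\ge 1}z^n\eta^+_n\,e^{-in\theta},\;\xi^+_0+\eta^++\sum_{n\ge 1}z^n\xi^+_n\,e^{-in\theta},\;z\Bigr).
$$
The inequality $\sum_{n\ge 1}(1+n)^{2s}|z|^{2n}|\eta^+_n|^2\le\|\eta^+\|_s^2$ (valid since $|z|<1$) shows $\Phi$ is well defined and continuous; since each Fourier component is polynomial in $z$ and $\C$-linear in $(\xi^+,\eta^+)$, $\Phi$ is holomorphic.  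The compatibility conditions above show $\Phi$ is a bijection onto $\cN$ with inverse obtained by extracting the non-negative Fourier coefficients of $\xi$ and the strictly positive Fourier coefficients of $\eta$; this inverse is continuous linear and in particular holomorphic, so $\Phi$ is a biholomorphism onto $\cN$, proving (i).  Assertion (ii) is immediate because in the chart $\Phi$ the projection $\cN\to\INT(\D)$ reads as the projection onto the first factor of the product.

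For (iii), composing the given holomorphic map $A\to\cN$ with $\Phi^{-1}$ yields holomorphic maps $(z,t)\mapsto\xi^+_{z,t}\in H^s_+$ and $(z,t)\mapsto\eta^+_{z,t}\in H^s_{++}$.  Using $a_n=\xi^+_{z,t,n}$ for $n\ge 0$ and $a_{-m}=z^m\eta^+_{z,t,m}$ for $m\ge 1$, the substitution $x^{-m}=y^m/z^m$ (with direct verification on the two disks in the degenerate case $z=0$) gives the uniform formula
$$
v_{z,t}(x,y)=\sum_{n\ge 0}\xi^+_{z,t,n}\,x^n+\sum_{m\ge 1}\eta^+_{z,t,m}\,y^m
$$
valid on $N_z$ for every $z$.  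Setting $z=xy$ identifies $H(x,y,t)$ with this series, which agrees with the piecewise three-case formula given in the statement.  Each partial sum is manifestly holomorphic in $(x,y,t)$, and the $H^s$ bounds on $\xi^+_{z,t},\eta^+_{z,t}$, uniform on compact subsets of $A$, combined with $|x|,|y|<1$, yield uniform convergence of both series on compacta; Weierstrass's theorem on limits of holomorphic functions then gives joint holomorphicity of $H$.  The main subtlety is making the Fourier description of $\cN$ uniform across the smooth-annulus regime $z\ne 0$ and the nodal regime $z=0$, which is precisely what the explicit formula for $\Phi$ achieves.
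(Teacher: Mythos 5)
Your proof is correct and takes essentially the same route as the paper: parametrize $\cN$ explicitly by the positive Fourier data via a globally defined map into $H^s\times H^s\times\INT(\D)$, observe that the parametrization and its linear left inverse are holomorphic, read off the submersion $\cN\to\INT(\D)$ as a coordinate projection, and for (iii) express $H$ as the absolutely convergent power series $\sum_{n\ge0}\xi^+_{xy,t,n}x^n+\sum_{m\ge1}\eta^+_{xy,t,m}y^m$ and invoke locally uniform convergence. The only cosmetic difference is bookkeeping: the paper records the common constant term $\lambda$ as a separate coordinate and uses the operators $\cT_z$ on the space $H^s_+$ of \emph{strictly} positive Fourier modes, whereas you absorb the constant into $\xi^+$; the resulting chart and the final formula for $H$ are identical.
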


\begin{proof}   
Let $(z,\xi,\eta)\in\INT(\D)\times H^s(S^1,\C^\sm)\times H^s(S^1,\C^\sm)$
and write
$$
\xi(x)=:\sum_{n\in\Z}\xi_nx^n,\qquad
\eta(y)=:\sum_{n\in\Z}\eta_ny^n,
$$
i.e. $\xi_n,\eta_n\in\C^\sm$ are the Fourier coefficients
of $\xi,\eta$. 
When $(z,\xi,\eta)\in\cN$ each of these series converges on the annulus
with inner radius $\Abs{z}$ and outer radius one. 
(Thus was used in defining $H$.)
When $z\ne 0$ we have 
$$
(z,\xi,\eta)\in\cN\qquad\iff\qquad 
\eta_{-n} = z^n\xi_n\mbox{ for all }n\in\Z,
$$
but 
$$
(0,\xi,\eta)\in\cN\qquad\iff\qquad
\xi_0=\eta_0,\;\;\xi_n=\eta_n=0\mbox{ for }n<0. 
$$
Denote by $H^s_\pm(S^1,\C^\sm)\subset H^s(S^1,\C^\sm)$
the Hardy space of all $\zeta\in H^s(S^1,\C^\sm)$ whose 
Fourier coefficients $\zeta_n$ vanish for $\mp n\ge 0$.  
For $z\in\INT(\D)$ define the bounded linear operator
$\cT_z:H^s_+(S^1,\C^\sm)\to H^s_-(S^1,\C^\sm)$ by 
$$
\cT_z\left(\sum_{n>0}c_n e^{in\theta}\right) 
:= \sum_{n>0} z^nc_ne^{-in\theta}.
$$
Then the resulting map 
$$
\INT(\D)\times H^s_+(S^1,\C^\sm)\to H^s_-(S^1,\C^\sm):
(z,\zeta_+)\mapsto \cT_z(\zeta_+)
$$
is holomorphic.  Moreover, the set $\cN$
can be written in the form 
$$
\cN=\left\{
(z,\xi_++\lambda+\cT_z(\eta_+),\eta_++\lambda+\cT_z(\xi_+))\,\bigg|\,
\begin{array}{l}
\xi_+,\eta_+\in H^s_+(S^1,\C^\sm),\\
\lambda\in\C^\sm,\,z\in\INT(\D)
\end{array}
\right\}.
$$
Hence $\cN$ is a complex Hilbert submanifold of the space  
$$
\C\times H^s(S^1,\C^\sm)^2\cong
\C\times H^s_+(S^1,\C^\sm)^2\times(\C^\sm)^2\times H^s_-(S^1,\C^\sm)^2.
$$
The formula shows that the projection $\cN\to\INT(\D)$ is a
surjective submersion.  This proves~(i) and~(ii).  

To prove~(iii) we observe that the projection 
$H^s(S^1,\C^\sm)\to H^s_+(S^1,\C^\sm)$ and the evaluation map 
$\INT(\D)\times H^s_+(S^1,\C^m)\to\C^\sm:(z,\zeta)\mapsto\zeta(z)$
are holomorphic.  Hence~(iii) follows from the identification 
$$
H(x,y,t) = \xi_{xy,t,+}(x) + \eta_{xy,t,+}(y) + \lambda(xy,t)
$$
where $\lambda(z,t)$ denotes the common constant term 
of the power series $\xi_{z,t}$ and~$\eta_{z,t}$.
This proves the lemma.
\end{proof}

\begin{proof}[Proof of Theorem~\ref{thm:V}~(iii) and~(iv).]
We prove that $\cV'$ is a complex Hil\-bert submanifold of $\cV$.
As in the proof of~(i) we choose holomorphic coordinate
charts $\psi_\si:V'_\si\to\C^\sm$ such that $\psi_\si(p_\si)=0$
and $\psi_\si(V'_\si)$ is the open unit disc in $\C^\sm$ for every $\si$. 
Define the map 
$$
\cV'\to A\times(H^s(S^1,\C^\sm))^{2\sk}:
(a,\beta)\mapsto(a,\xi_1,\eta_1,\dots,\xi_\sk,\eta_\sk)
$$
by
$$
\xi_\si:=\psi_\si\circ\beta\circ x_\si^{-1},\qquad
\eta_\si:=\psi_\si\circ\beta\circ y_\si^{-1}.
$$
as in~(\ref{eq:xietai}). The image of this map is the subset
$$
\left\{(a,\xi_1,\eta_1,\dots,\xi_\sk,\eta_\sk)
\in A\times H^s(S^1,\C^\sm))^{2\sk}\,\big|\,
(z_\si(a),\xi_\si,\eta_\si)\in\cN\;\forall\;\si\right\}.
$$
By Lemma~\ref{le:localmodel}, this set 
is a complex Hilbert submanifold of $A\times(H^s(S^1,\C^\sm))^{2\sk}$.
Hence $\cV'$ is a complex Hilbert submanifold of $\cV'$ and the
projection $\cV\to A$ is a submersion.

The proof that $\cV''$ is a complex Hilbert submanifold of $\cV$ follows
the argument in the proof of~\cite[Theorem~11.9~(ii)]{RS}. 
Define
\begin{equation*}
\begin{split}
\cB &:= \bigl\{(a,h'')\,\big|\,a\in A,\,h''\in H^{s+1/2}(P''_a,M)\bigr\}, \\
\cZ &:= \bigl\{(a,h'')\in\cB\,\big|\,h''\in\Hol^{s+1/2}(P''_a,M)\bigr\}.
\end{split}
\end{equation*}
We construct an auxiliary Hilbert manifold structure on $\cB$
and show that $\cZ$ is a smooth submanifold of $\cB$. 
Fix a Hardy trivialization $(P=P'\cup P'',\iota,\rho)$ 
as in~\ref{hardyTrivialization} and denote
$$
\cB_0:=\bigl\{(a,w)\,\big|\,a\in A,\,w\in H^{s+1/2}(\Omega,M)\bigr\}
$$
This space is a Hilbert manifold and the Hardy trivialization
induces a bijection
$$
\cB_0\to\cB:(a,w)\mapsto(a,h'':=w\circ\rho_a).
$$
This defines the Hilbert manifold structure on $\cB$.
The bijection $\cB_0\to\cB$ identifies the subset
$\cZ\subset\cB$ with the subset $\cZ_0\subset\cB_0$ given by
$$
\cZ_0:=\bigl\{(a,w)\in\cB_0\,\big|\,w\in\Hol^{s+1/2}((\Omega,j(a)),M)\bigr\},
$$
where $j(a):=(\rho_a)_*(J_P|P''_a)$, $\rho_a:P''_a\to \Omega$ 
is the Hardy trivialization, and $J_P$ is the complex structure on $P$.
(The map $a\mapsto j(a)$ need not be holomorphic.)

We prove that $\cZ_0$ is a smooth Hilbert submanifold of $\cB_0$.
The tangent space of $\cB_0$ at a pair $(a,w)$ is
$$
T_{a,w}\cB_0 = T_aA\times H^{s+1/2}(\Omega,w^*TM).
$$
Let $\cE\to\cB_0$ be the complex Hilbert space bundle
whose fiber
$$
\cE_{a,w} := H^{s-1/2}(\Omega,
\Lambda_{j(a)}^{0,1}T^*\Omega\otimes w^*TM)
$$
over $(a,w)\in\cB_0$ is the Sobolev space of $(0,1)$-forms
on $(\Omega,j(a))$ of class $H^{s-1/2}$ with values in the
pullback tangent bundle $w^*TM$. As before the
Cauchy--Riemann operator defines a smooth
section $\dbar:\cB_0\to\cE$ given by
\begin{equation}\label{eq:section}
\dbar(a,w) := \bar\p_{j(a),J}(w)
= \frac12\left(dw + J\circ dw\circ j(a)\right).
\end{equation}
Here $J$ denotes the complex structure on $M$.
The zero set of this section is the set $\cZ_0$ defined above.
It follows as in the proof of~(i) that the
linearized operator $D_{a,w}:T_{a,v''}\cB_0\to\cE_{a,w}$
is surjective and has a right inverse.  Hence the zero set $\cZ_0$
is a smooth Hilbert submanifold of $\cB_0$.
Again as in the proof of~(i) restriction to
the boundary gives rise to a smooth injective immersion
$$
\cZ_0\to\cV:(a,w)\mapsto(a,\beta),\qquad
\beta:=w\circ\rho_a^{-1}|\Gamma_a.
$$
The image of a sufficiently small neighbourhood
of $(a_0,w_0:=H_A|\Omega)$ under this immersion is $\cV''$;
the neighborhood is $\cZ_0\cap(A\times\cW_0)$ after shrinking 
$A$ and $\cW_0$, if necessary.  Hence $\cV''$ is a smooth 
Hilbert submanifold of $\cV$.  That it is a complex submanifold
follows, as in the proof of Theorem~11.9 in~\cite{RS}, 
by introducing an auxiliary (almost) complex structure on $\cZ_0$.  
Namely, the push forward of the complex structure on $P''$ 
by the Hardy trivialization
$$
\pi_A\times\rho:P''\to A\times\Omega
$$
of~\ref{hardyTrivialization} has the form~(\ref{eq:JP}) 
for a smooth map $j:A\to\cJ(\Omega)$
and a smooth $1$-form $\eta:TA\to\Vect(\Omega)$ 
satisfying~(\ref{eq:jeta1}) and~(\ref{eq:jeta2}).
Since $\rho$ is holomorphic near $\p P'$ with respect to
the complex structure of $\Omega$ it follows that
$\eta$ vanishes near $A\times\p\Omega$. The tangent space
$T_{(a,w)}\cZ_0$ is the kernel of the operator $\cD_{a,w}$
from $T_{(a,w)}\cB_0$ to $\Omega^{0,1}_{j(a)}(\Omega,w^*TM)$
given by
\begin{equation}\label{eq:Daw}
\cD_{(a,w)}(\hat{a},\hat{w})=D_w\hat{w} +\frac12 J(w)dw\cdot dj(a)\hat{a}.
\end{equation}
It follows from~(\ref{eq:jeta1}) and~(\ref{eq:jeta2}) that the automorphisms
$$
\left(\hat{a},\hat{w}\right)\mapsto
\left(\sqrt{-1}\hat{a}, J(w)\hat{w}-dw\cdot\eta(a,\hat{a})\right)
$$
define an almost complex structure on $\cZ_0$. Since $\eta$ vanishes
near the boundary, the embedding
$$
\cZ_0\to A\times H^s(\Gamma,M): (a,w)\mapsto(a,w\circ\iota_{a_0})
$$
is holomorphic. Hence $\cV''$ is a complex submanifold 
of $\cV$ as claimed.

That the projection 
$\cV''\to A$ is a submersion follows from the fact that the 
linearized operator~(\ref{eq:Daw}) of the section~(\ref{eq:section}) 
is already surjective when differentiating in the direction 
of a vector field $\hat v$ along~$v$. 
This completes the proof Theorem~\ref{thm:V}.
\end{proof}

\dfn\label{def:core}
Let $\pi_A:P\to A$ be a nodal family
and denote by 
$$
C_1,\dots,C_\sk\subset P
$$
the components of the singular set near $P_{a_0}$.
The set
$$
A_0:=\pi_A(C_1)\cap\cdots\cap\pi_A(C_\sk)
$$
is called the \jdef{core} of the family.  
Recall from~\cite[Definition~12.1]{RS} that we 
call $\pi_A$ \jdef{regular nodal}  if
the submanifolds $\pi_A(C_\si)$ intersect transversally. 
In this case, the core  $A_0$ is a complex submanifold of $A$
of codimension $\sk$.
We call an unfolding $(\pi_A:P\to A,R_*,a_0)$ \jdef{regular nodal} 
iff the ambient family $\pi_A:P\to A$  is regular nodal.
In~\cite[Theorem~5.6]{RS} we constructed a universal 
unfolding which is regular nodal.
By the uniqueness of universal unfoldings it follows 
(after shrinking $A$ if necessary)
that every universal unfolding is regular nodal.
\nfd

\begin{theorem}\label{thm:transverse}
Continue the notation of~\ref{Hardy}, \ref{hardyTrivialization}, 
\ref{cV}, and Definition~\ref{def:core}, and
fix an integer $s+1/2>1$.  Assume that the unfolding 
$(\pi_A,R_*,a_0)$ (of marked nodal Riemann surfaces) 
is universal.  Let $w_0:\Sigma\to P_{a_0}$ 
be a desingularization with induced structures 
$s_{0,*}$, $\nu_0$, $j_0$, $v_0:=h_0\circ w_0$ on~$\Sigma$. 
Then the configuration $(\Sigma,s_{0,*},\nu_0,j_0,v_0)$ 
is stable; assume that it is regular. Then the following holds.
\begin{description}
\item[(i)]
$\cV'$ and $\cV''$ intersect transversally in $\cV$ at 
$(a_0,\beta_0:=h_0|\Gamma_{a_0})$. 
\item[(ii)]
The projection $\cV'\cap\cV''\to A$ is tranverse 
to $A_0$ at $(a_0,\beta_0)$.
\end{description}
\end{theorem}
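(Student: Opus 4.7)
The plan is to reduce both parts to surjectivity of a single restricted obstruction map $\Phi|_{T_{a_0}A_0}: T_{a_0}A_0 \to \coker D_{v_0}$.

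Definition of $\Phi$. Since $\cV',\cV'' \to A$ are holomorphic submersions (Theorem~\ref{thm:V}(iv)), choose linear sections $\sigma': T_{a_0}A \to T_{(a_0,\beta_0)}\cV'$ and $\sigma'': T_{a_0}A \to T_{(a_0,\beta_0)}\cV''$ of the tangent projections. The difference $\sigma'(\hat{a}) - \sigma''(\hat{a})$ lies in the fiber $T_{\beta_0}\cV_{a_0}$; set
\[
\Phi(\hat{a}) := \bigl[\sigma'(\hat{a}) - \sigma''(\hat{a})\bigr]
\in T_{\beta_0}\cV_{a_0}/\bigl(T_{\beta_0}\cV'_{a_0} + T_{\beta_0}\cV''_{a_0}\bigr) \cong \coker D_{v_0},
\]
the identification being Theorem~\ref{thm:V}(ii). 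An elementary check shows (a) $\Phi$ is well-defined (independent of the splittings); (b) $\cV'$ and $\cV''$ are transverse at $(a_0,\beta_0)$ if and only if $\Phi$ is surjective; and (c) the image of $T_{(a_0,\beta_0)}(\cV' \cap \cV'') \to T_{a_0}A$ equals $\ker\Phi$. By (c), transversality of $\cV' \cap \cV'' \to A$ with $A_0$ at $(a_0,\beta_0)$ is equivalent to $\ker\Phi + T_{a_0}A_0 = T_{a_0}A$, i.e., to $\Phi|_{T_{a_0}A_0}$ being surjective. Hence proving surjectivity of $\Phi|_{T_{a_0}A_0}$ establishes both (i) and (ii) at once.

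For the geometric identification of $\Phi|_{T_{a_0}A_0}$, each $\hat{a} \in T_{a_0}A_0$ preserves the nodal combinatorics, so fibers $P_a$ for $a \in A_0$ near $a_0$ stay diffeomorphic to $P_{a_0}$. Thus $\hat{a}$ induces, through the desingularization $w_0$, an infinitesimal complex-structure deformation $\jhat(\hat{a}) \in \Omega^{0,1}_{j_0}(\Sigma,T\Sigma)$ on $\Sigma$. Unwinding the Hardy trivialization, the natural lifts $\sigma'(\hat{a})$ and $\sigma''(\hat{a})$ are assembled from deforming the holomorphic extensions $h_0|P'_{a_0}$ and $h_0|P''_{a_0}$ of $\beta_0$: the variations of the extending maps contribute elements of $T_{\beta_0}\cV'_{a_0}$ and $T_{\beta_0}\cV''_{a_0}$ respectively (which drop out of the quotient), while the varying complex structure on the base contributes the boundary value of $dv_0 \cdot \jhat(\hat{a})$. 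This should yield
\[
\Phi(\hat{a}) = \bigl[dv_0 \cdot \jhat(\hat{a})\bigr] \in \coker D_{v_0}, \qquad \hat{a} \in T_{a_0}A_0,
\]
well-defined modulo $\dbar_{j_0}\Vect(\Sigma,\nu_0)$ thanks to the infinitesimal equivariance~(\ref{eq:Ddbar}).

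With this formula, surjectivity of $\Phi|_{T_{a_0}A_0}$ follows from two inputs: (1) the regularity hypothesis~(\ref{eq:regular}), which writes every class in $\coker D_{v_0}$ in the form $[dv_0 \cdot \jhat]$ for some $\jhat \in \Omega^{0,1}_{j_0}(\Sigma,T\Sigma)$; and (2) the assumed infinitesimal universality of $(\pi_A,R_*,a_0)$ as a marked nodal Riemann surface unfolding, which implies that the Kodaira--Spencer map $T_{a_0}A_0 \to \Omega^{0,1}_{j_0}(\Sigma,T\Sigma)/\dbar_{j_0}\Vect(\Sigma,\nu_0)$ is surjective --- the core $A_0$ captures precisely the in-stratum complex-structure deformations of $(\Sigma,j_0)$, while the $\sk$ normal directions to $A_0$ account separately for the node-smoothings. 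The main obstacle will be the Hardy-trivialization calculation producing the explicit formula for $\Phi$: one must carefully verify that the contributions from varying the extending maps $h'$ and $h''$ really lie in $T_{\beta_0}\cV'_{a_0}$ and $T_{\beta_0}\cV''_{a_0}$ respectively, so that the only residual obstruction left after passing to the quotient is $dv_0 \cdot \jhat(\hat{a})$.
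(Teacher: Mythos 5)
Your reduction of both parts to surjectivity of a single map $\Phi\colon T_{a_0}A\to\coker D_{v_0}$, and then of $\Phi|_{T_{a_0}A_0}$, is correct and is a tidy unification: once $\Phi|_{T_{a_0}A_0}$ is onto, $\Phi$ is onto (giving~(i)), and for any $\hat a\in T_{a_0}A$ one finds $\hat a_0\in T_{a_0}A_0$ with $\Phi(\hat a_0)=\Phi(\hat a)$ so that $\hat a=(\hat a-\hat a_0)+\hat a_0\in\ker\Phi+T_{a_0}A_0$ (giving~(ii)). The paper does not package things this way: it proves~(i) by showing every smooth vertical vector $(0,\hat\gamma)$ lies in $T\cV'+T\cV''$ (taking $\xi'=0$, building $\xi''$ so that $D_{w_0}\xi''$ vanishes to infinite order along $\Gamma$, and invoking regularity to write the resulting $\eta$ as $\cD_{a_0,v_0}(\hat a,\hat v)$ with $\hat a\in T_{a_0}A_0$), and then closes by density and the submersion $\cV'\to A$; it proves~(ii) separately by a dimension count using surjectivity of $\cD_{a_0,v_0}$ restricted to $T_{a_0}A_0\times\Omega^0$. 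Your obstruction map makes it visible that both statements rest on exactly the same surjectivity.

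The one place where your writeup is off is the heuristic for the formula $\Phi(\hat a)=[dv_0\cdot\jhat(\hat a)]$. The variations $\xi'$, $\xi''$ of the extending holomorphic maps do \emph{not} lie in $T_{\beta_0}\cV'_{a_0}$, $T_{\beta_0}\cV''_{a_0}$: they solve the inhomogeneous equations $D_{v_0'}\xi'=-\tfrac12 J(v_0)dv_0\cdot\jhat(\hat a)|\Sigma'$ and $D_{v_0''}\xi''=-\tfrac12 J(v_0)dv_0\cdot\jhat(\hat a)|\Sigma''$, not the homogeneous ones. What is true is that $\Phi(\hat a)=[(\xi'-\xi'')|\Gamma]$, and this class equals, under the explicit isomorphism $\coker D_{v_0}\cong T_{\beta_0}\cV_{a_0}/(T_{\beta_0}\cV'_{a_0}+T_{\beta_0}\cV''_{a_0})$ constructed in the proof of Theorem~\ref{thm:V}~(ii) (the map $[\eta]\mapsto[(\xi'-\xi'')|\Gamma]$ where $D_{v_0'}\xi'=\eta|\Sigma'$, $D_{v_0''}\xi''=\eta|\Sigma''$), the class $\bigl[-\tfrac12 J(v_0)dv_0\cdot\jhat(\hat a)\bigr]=\bigl[-\tfrac12 dv_0\cdot j_0\,dj(a_0)\hat a\bigr]$. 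This differs from what you wrote by the harmless invertible factor $-\tfrac12 j_0$, so the surjectivity argument (regularity~(\ref{eq:regular}) plus the slice property of $j\colon A_0\to\cJ(\Sigma)$ plus the equivariance identity~(\ref{eq:Ddbar})) goes through unchanged. Filling in that one computation, which is exactly the isomorphism of Theorem~\ref{thm:V}~(ii) unwound, would complete the proof; everything else you wrote is right.
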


\begin{proof}
Recall the auxiliary Hilbert manifold structure on $\cV$ from~\ref{cV}
given by the bijection~(\ref{eq:Vtriv}). The tangent space at 
$(a,\gamma)\in A\times H^s(\Gamma,M)$
is the set of pairs $(\hat{a},\hat{\gamma})$ with $\hat{a}\in T_aA$
and $\hat{\gamma}\in H^s(\Gamma,\gamma^*TM)$.  We abuse notation
and write
$$
T_{(a,\beta)}\cV = T_aA \times  H^s(\Gamma,\gamma^*TM),\qquad
\gamma:=\beta\circ\iota_a. 
$$
Below we prove the following. 

\medskip\noindent
{\bf Claim:} {\it If $\hat\gamma\in\Omega^0(\Gamma,\gamma_0^*TM)$ is a smooth
vector field along $\gamma_0:=\beta_0\circ\iota_{a_0}$ then the pair 
$(0,\hat\gamma)$ belongs to the sum $T_{(a_0,\beta_0)}\cV'+T_{(a_0,\beta_0)}\cV''$.}

\medskip\noindent
We show first that this claim implies~(i).  
By part~(ii) of Theorem~\ref{thm:V} the sum 
$T_{\beta_0}\cV'_{a_0}+T_{\beta_0}\cV''_{a_0}$ is a closed
subspace of $T_{\beta_0}\cV_{a_0}$ and hence 
$T_{(a_0,\beta_0)}\cV'+T_{(a_0,\beta_0)}\cV''$ is a closed 
subspace of $T_{(a_0,\beta_0)}\cV$.  Hence the claim implies
that every vertical tangent vector $(0,\hat\gamma)$ with
$\hat\gamma\in H^s(\Gamma,\gamma^*TM)$ is contained in the sum
$T_{(a_0,\beta_0)}\cV'+T_{(a_0,\beta_0)}\cV''$.  Since the projection
$\cV'\to A$ is a submersion by part~(iv), this implies
$$
T_{(a_0,\beta_0)}\cV'+T_{(a_0,\beta_0)}\cV''=T_{(a_0,\beta_0)}\cV.
$$
Thus we have proved that~(i) follows from the claim.

The desingularization $w_0:\Sigma\to P_{a_0}$ induces a decomposition
$$
\Sigma=\Sigma'\cup\Sigma'',\qquad 
\Sigma':=w_0^{-1}(P'_{a_0}),\qquad
\Sigma'':=w_0^{-1}(P''_{a_0}).
$$
The intersection $\Sigma'\cap\Sigma''=\p\Sigma'=\p\Sigma''$ is diffeomorphic
to the $1$-manifold $\Gamma$ in~(\ref{eq:trivialize}). 
To simplify the notation we assume that
$$
\Gamma=\Sigma'\cap\Sigma''.
$$
The core admits a smooth desingularization
$$
\iota:A_0\times\Sigma\to P_0:=\pi_A^{-1}(A_0)
$$
that agrees with $w_0:\Sigma\to P_{a_0}$ at the base point $a_0$ and
with the trivialization~(\ref{eq:trivialize}) on $A_0\times\Gamma$.
Choose $\iota$ so that it maps each component of $A_0\times\cup\nu$
to the corresponding component $C_\si$ of the singular set and so 
that 
$$
\iota^{-1}(R_\si)  =  A_0\times\{s_{0,\si}\},\qquad
\si=1,\dots,\sn.
$$
For $a\in A_0$ define the desingularization
$\iota_a:\Sigma\to P_a$ by 
$$
\iota_a(z):=\iota(a,z).
$$ 
The trivialization induces a map $j:A_0\to\cJ(\Sigma)$ determined by the
condition that $\iota_a$ is holomorphic with respect to $j(a)$ for every
$a\in A_0$.  Since $(\pi_A,R_*,a_0)$ is a universal unfolding as in~\cite{RS},
the map $j:A\to\cJ(\Sigma)$ contains a local slice of the $\Diff(\Sigma)$-action.

We prove the claim. Let $\hat\gamma\in\Omega^0(\Gamma,\gamma_0^*TM)$ 
be a smooth vector field along~$\gamma_0$.  There exist  
$\xi'\in \Omega^0(\Sigma',v_0^*TM)$,
$\xi''\in \Omega^0(\Sigma'',v_0^*TM)$, and 
$\eta\in\Omega^{0,1}(\Sigma,v_0^*TM)$
such that 
$$
\hat{\gamma}=(\xi'-\xi'')|\Gamma,\qquad
D_{w_0}\xi'=\eta|\Sigma',\qquad
D_{w_0}\xi''=\eta|\Sigma''.
$$
To see this take $\xi'=0$ and construct $\xi''$
so that $D_{w_0}\xi''$ vanishes to infinite order along $\Gamma$. 
(The equation determines the Taylor expansion along $\Gamma$
and then use Emile Borel's extension theorem.) By the hypothesis
that the stable map $(\Sigma,s_{0,*},\nu_0,j_0,v_0)$ is regular,
there exists $\hat{a}\in T_{a_0}A$ and $\hat v\in\Omega^0(\Sigma/\nu_0,w_0^*TM)$
such that
$$
\eta=\cD_{a_0,v_0}(\hat{a},\hat v)
:=D_{v_0}\hat v + \frac12dv_0\cdot j_0dj(a)\hat{a}.
$$
It follows that the pair $((\xi'-\hat v)|\Gamma,-\hat a)$ represents a tangent vector 
to $\cV'$ and the pair $((\xi''-\hat v)|\Gamma,-\hat a)$ represents a tangent vector 
to $\cV''$.  Their difference is equal to $(0,\hat\gamma)$.  This proves the claim
and hence part~(i) of the theorem. 

We prove~(ii).  
By~(i) and Theorem~\ref{thm:V}~(ii), the intersection
$\cV'\cap\cV''$ has complex dimension
\begin{eqnarray*}
\dim_\C(\cV'\cap\cV'')
&=& \INDEX_\C(D_{v_0})+\dim_\C(A)  \\
&=&  (\sm-3)(1-\sg) + \inner{c_1}{\sd} + \sn 
\end{eqnarray*}
where $\sd:=[v_0]\in H_2(M;\Z)$ 
denotes the homology  class represented by $v_0$.  
Now abbreviate
$$
\gamma_0 := v_0|\Gamma = \beta_0\circ\iota_{a_0}:\Gamma\to M.
$$
Assertion~(ii) follows from the fact that the subspace 
$$
\cX_0 := \left\{(\hat a,\hat\gamma)
\in T_{(a_0,\beta_0)}\cV'\cap T_{(a_0,\beta_0)}\cV''\,\big|\,
\hat a\in T_{a_0}A_0\right\}
$$
has dimension
\begin{equation}\label{eq:dimX0}
\dim_\C\cX_0 = (\sm-3)(1-\sg) + \inner{c_1}{\sd} + \sn - \sk.
\end{equation}
To prove this we observe that the pair 
$(\hat a,\hat\gamma)\in T_{a_0}A\times\Omega^0(\Gamma,\gamma_0^*TM)$ 
belongs to the intersection $T_{(a_0,\beta_0)}\cV'\cap T_{(a_0,\beta_0)}\cV''$
if and only if there exists a vector field $\hat v\in\Omega^0(\Sigma/\nu,v_0^*TM)$
satisfying 
$$
\cD_{a_0,v_0}(\hat{a},\hat v)
= D_{v_0}\hat v + \frac12dv_0\cdot j_0dj(a)\hat{a}
= 0,\qquad
\hat v|\Gamma=\hat\gamma.
$$
Since the restriction of the operator 
$$
D_{v_0}:\Omega^0(\Sigma/\nu,v_0^*TM)\to\Omega^{0,1}(\Sigma,v_0^*TM)
$$
is Fredholm with index 
$$
\INDEX_\C(D_{v_0}) = \sm(1-\sg) + \inner{c_1}{\sd} 
$$
and 
$$
\dim_\C A_0 = 3\sg-3 + \sn - \sk
$$
and the augmented operator
$$
\cD_{a_0,v_0}:T_{a_0}A_0\times
\Omega^0(\Sigma/\nu,v_0^*TM)
\to\Omega^{0,1}(\Sigma,v_0^*TM)
$$
is surjective, this implies~(\ref{eq:dimX0}) and hence part~(ii)
of the theorem. 
\end{proof}

\para\label{infuniv}
For every $a\in A$ there is a map 
\begin{equation}\label{eq:HB}
\cU_a\to\cV_a:(\alpha,b)\mapsto \beta:=H_B\circ\alpha
\end{equation}
which sends $\cU'_a$ to $\cV'_a$ and $\cU''_a$ to $\cV''_a$. 
It follows from our definitions and Theorems~\ref{thm:U} 
and~\ref{thm:V} that the unfolding $(\pi_B,S_*,H_B,b)$ is 
infinitesimally universal if and only if the operator 
$$
dH_B(\alpha):T_{(\alpha,b)}\cU_a\to T_\beta\cV_a
$$
induces isomorphisms
$$
dH_B(\alpha):T_{(\alpha,b)}\cU'_a\cap T_{(\alpha,b)}\cU''_a
\to T_\beta\cV'_a\cap T_\beta\cV''_a,
$$ 
$$
dH_B(\alpha):
\frac{T_{(\alpha,b)}\cU_a}{T_{(\alpha,b)}\cU'_a+T_{(\alpha,b)}\cU''_a}
\to \frac{T_\beta\cV_a}{T_\beta\cV'_a+T_\beta\cV''_a}
$$ 
for some (and hence every) unfolding $(\pi_A,R_*,H_A,a)$
and fiber isomorphism $f:P_a\to Q_b$.  Thus~(\ref{eq:HB})
is an exact morphism of Fredholm quadruples as in~\ref{fredh} 
below. 
\arap


\section{Fredholm intersection theory} \label{sec:interfred} 
  
\para\label{fredE}
Let $E$ be a Hilbert space and $E',E''\subset E$ be closed subspaces. 
We call $(E,E',E'')$ a \jdef{Fredholm triple} (of subspaces) if  
the intersection $E'\cap E''$ is finite dimensional, 
the sum $E'+E''$ is a closed subspace of $E$, 
and the quotient $E/(E'+E'')$ is finite dimensional. 
The triple $(E,E',E'')$ is Fredholm if and only if the operator 
\begin{equation}\label{eq:E} 
E'\times E''\to E:(x',x'')\mapsto x'+x'' 
\end{equation} 
is Fredholm.  The \jdef{Fredholm index} of the triple is defined as the 
Fredholm index of the operator~(\ref{eq:E}). The image of~(\ref{eq:E}) 
is the sum $E'+E''$ and its kernel is isomorphic to $E'\cap E''$ 
via the inclusion 
$$
E'\cap E''\to E'\times E'':x\mapsto(x,-x).
$$ 
Hence the index of the triple $(E,E',E'')$ is 
$$ 
\INDEX(E,E',E'') := \dim(E'\cap E'') - \dim(E/(E'+E'')). 
$$ 
Standard Fredholm theory implies that the Fredholm property  
and the index are stable under small deformations 
of the subspaces $E'$ and $E''$. 
\arap 
 
\para\label{fredX} 
Let $X$ be a Hilbert manifold, $X',X''\subset X$ 
be smooth submanifolds, and $x_0\in X'\cap X''$. 
We call the quadruple $(X,X',X'',x_0)$ \jdef{Fredholm}  
if the triple $(T_{x_0}X,T_{x_0}X',T_{x_0}X'')$ is Fredholm. 
Define its \jdef{Fredholm index} to be the index of the triple. 
If $(X,X',X'',x_0)$ is Fredholm then so is $(X,X',X'',x)$ 
for $x\in X'\cap X''$ sufficiently close to $x_0$ and both 
quadruples have the same Fredholm index. 
\arap 
 
\begin{lemma}[Normal coordinates]\label{le:fredX} 
Let $(X,X',X'',x_0)$ be a Fredholm qua\-druple as in~\ref{fredX} 
and abbreviate 
$$
E:=T_{x_0}X,\qquad E':=T_{x_0}X',\qquad E'':=T_{x_0}X''.
$$  
Then there are coordinates $u,x',x'',\xi$ defined in a neighborhood 
of $x_0$ in $X$ satisfying the following conditions. 
\begin{description} 
\item[(i)] 
$u$ takes values in $E'\cap E''$ and $u(x_0)=0$. 
\item[(ii)] 
$x'$ takes values in a complement to $E'\cap E''$ 
in $E'$ and $x'(x_0)=0$. 
\item[(iii)] 
$x''$ takes values in a complement to $E'\cap E''$ 
in $E''$ and ${x''(x_0)=0}$. 
\item[(iv)] 
$\xi$ takes values in a complement to $E'+E''$ 
in $E$ and $\xi(x_0)=0$. 
\item[(v)] 
Near $x_0$ the submanifolds $X'$, $X''$ and the subset $X'\cap X''$ 
are given by 
$$ 
X''=\{x'=0,\xi=0\},\qquad X'=\{x''=0,\xi=f(u,x')\}, 
$$ 
$$ 
X'\cap X'' = \{x'=0,x''=0,\xi=0,f(u,0)=0\} 
$$ 
for a smooth function $f$ with $f(0,0)=0$ and $df(0,0)=0$. 
\end{description} 
\end{lemma}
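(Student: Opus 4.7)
The plan is to build the coordinates in three stages: fix an algebraic splitting of $E$; flatten $X''$ to a linear subspace by the implicit function theorem; then straighten $X'$ by a shear in the $V''$ direction that preserves $X''$.

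First, the Fredholm hypothesis on $(E,E',E'')$ guarantees that $K:=E'\cap E''$ is finite dimensional and that $E'+E''$ is closed with finite dimensional complement. Choose closed subspaces producing a topological direct sum
$$
E \;=\; K \oplus V' \oplus V'' \oplus Z, \qquad E' = K\oplus V', \qquad E'' = K\oplus V'',
$$
where $Z$ is a complement of $E'+E''$ in $E$. Decompose points of $E$ accordingly as four-tuples $(u,x',x'',\xi)$. Pick any chart of $X$ centered at $x_0$ sending a neighborhood of $x_0$ onto a neighborhood of $0\in E$; under it $X'$ and $X''$ become smooth submanifolds through $0$ with tangent spaces $E'$ and $E''$.

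Next, using that $W:=V'\oplus Z$ is a closed complement of $E''$, the implicit function theorem presents $X''$ near $0$ as the graph of a smooth map $\psi$ defined on a neighborhood of $0$ in $E''$ with values in $W$ satisfying $\psi(0)=0$ and $d\psi(0)=0$. The shear $(e'',w)\mapsto(e'',w-\psi(e''))$ is a local diffeomorphism of $E$, tangent to the identity at $0$, which sends $X''$ onto the linear subspace $E''=\{x'=0,\ \xi=0\}$. In the resulting coordinates $X'$ remains a smooth submanifold through $0$ tangent to $E'$, so, using the splitting $E=E'\oplus(V''\oplus Z)$, another application of the implicit function theorem writes $X'$ as the graph of a smooth $h=(g,f)$ defined on a neighborhood of $0$ in $E'$ with values in $V''\oplus Z$ and satisfying $h(0)=0$ and $dh(0)=0$.

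Finally apply the diffeomorphism
$$
\Psi(u,x',x'',\xi) := \bigl(u,\,x',\,x''-g(u,x'),\,\xi\bigr),
$$
of a neighborhood of $0$ in $E$, tangent to the identity at $0$ because $dg(0,0)=0$; its inverse is $(u,x',y'',\xi)\mapsto(u,x',y''+g(u,x'),\xi)$. Since $g$ takes values in $V''$ alone, $\Psi$ leaves the equations $x'=0$ and $\xi=0$ untouched, so $X''$ keeps the form $\{x'=0,\ \xi=0\}$. On the parametrization $(u,x')\mapsto(u,x',g(u,x'),f(u,x'))$ of $X'$, the map $\Psi$ acts by $(u,x',g,f)\mapsto(u,x',0,f)$, giving $\Psi(X')=\{x''=0,\ \xi=f(u,x')\}$, and $f(0,0)=0$, $df(0,0)=0$ are inherited from $h$. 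Intersecting the two descriptions yields $X'\cap X''=\{x'=0,\ x''=0,\ \xi=0,\ f(u,0)=0\}$, as required. The only substantive input is the two straightenings, each of which reduces to the standard implicit function theorem for Hilbert manifolds with closed-complement tangent data (automatic from the Fredholm hypothesis); all remaining steps are purely algebraic.
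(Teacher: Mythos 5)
Your proof is correct and follows essentially the same route as the paper: the paper starts with a chart on $X''$ extended to $X$ (which automatically flattens $X''$ to $\{x'=0,\xi=0\}$) and then applies the final shear $x''\mapsto x''-\phi(u,x')$, whereas you flatten $X''$ by an explicit preliminary shear and then apply the same final shear in the $V''$-direction; this is an immaterial difference in bookkeeping. The key point in both — that the last shear, taking values only in $V''$, preserves $\{x'=0,\xi=0\}$ while zeroing out the $x''$-component of $X'$ — is handled identically.
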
 

\begin{proof} 
Choose any coordinate chart $(X'',x_0)\to(E'',0)$ whose differential 
at $x_0$ is the identity.  This coordinate chart can be written as 
$(u,x'')$ where $u$ takes values in $E'\cap E''$ and $x''$ 
takes values in a complement of $E'\cap E''$ in $E''$.  
Extend $(u,x'')$ to a coordinate chart $(X,x_0)\to(E,0)$. 
This extended coordinate chart can be written as $(u,x',x'',\xi)$ 
where $x'$ takes values in a complement of $E'\cap E''$ in $E'$ 
and $\xi$ takes values in a complement of $E'+E''$ in $E$. 
In these coordinates we have 
$$ 
X''=\{x'=0,\xi=0\},\qquad X'=\{x''=\phi(u,x'),\xi=f(u,x')\}. 
$$
where $\phi(0,0)=0$, $d\phi(0,0)=0$ and $f(0,0)=0$, 
$df(0,0)=0$. Now replace $x''$ by $x''-\phi(u,x')$ 
to obtain the required coordinate system. 
\end{proof} 

\begin{corollary}\label{cor:U'U''} 
Let $(X,X',X'',x_0)$ be as in Lemma~\ref{le:fredX}.  
Then there exists a neighborhood $X_0$ of $x_0$ in $X$ 
and finite dimensional submanifolds $U$, $U'$, $U''$ 
of $X$, $X'$, $X''$, respectively, passing through $x_0$ 
such that 
$$
U'=U\cap X',\qquad U''=U\cap X'',\qquad 
U'\cap U'' =X_0\cap X'\cap X'' 
$$
and, for $x\in U'\cap U''$, we have 
$$
T_xU'\cap T_xU''=T_xX'\cap T_xX'',\qquad 
\frac{T_xU}{T_xU'+T_xU''} 
\cong \frac{T_xX}{T_xX'+T_xX''}. 
$$
We call $(U,U',U'',x_0)$ a \jdef{finite dimensional reduction}. 
\end{corollary}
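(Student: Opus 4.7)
The plan is to take $U$ to be the natural finite dimensional slice picked out by the normal coordinates of Lemma~\ref{le:fredX}, and then to set $U':=U\cap X'$, $U'':=U\cap X''$. Concretely, write the target of the coordinates as $E=L\oplus N'\oplus N''\oplus K$ with $L=E'\cap E''$, $L\oplus N'=E'$, $L\oplus N''=E''$, and $K$ complementary to $E'+E''$ in $E$; here $L$ and $K$ are finite dimensional, while $N'$ and $N''$ need not be. In a small coordinate neighborhood $X_0$ of $x_0$ define
$$
U:=\{x'=0,\;x''=0\},\qquad U':=U\cap X',\qquad U'':=U\cap X''.
$$
Then $U$ is parameterized by $(u,\xi)\in L\times K$, and the description of $X'$ and $X''$ in Lemma~\ref{le:fredX}(v) yields $U'=\{x'=x''=0,\;\xi=f(u,0)\}$ and $U''=\{x'=x''=0,\;\xi=0\}$, both finite dimensional. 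The identity $U'\cap U''=X_0\cap X'\cap X''$ follows at once from the last line of Lemma~\ref{le:fredX}(v).

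The first tangent space identity is a direct computation. At a point $x=(u,0,0,0)\in U'\cap U''$ a vector in $T_xX'\cap T_xX''$ must have its $x'$-, $x''$- and $\xi$-components all zero, and the defining equation $\xi=f(u,x')$ of $X'$ then forces $d_uf(u,0)\hat u=0$; thus $T_xX'\cap T_xX''\cong\ker d_uf(u,0)$. Applied to the graphs $U'$ and $U''$ the same argument gives $T_xU'\cap T_xU''\cong\ker d_uf(u,0)$, so the two coincide.

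The main obstacle is the cokernel identity. A short block computation gives
$$
T_xU/(T_xU'+T_xU'')\cong K/\im d_uf(u,0),
$$
$$
T_xX/(T_xX'+T_xX'')\cong K/\bigl(\im d_uf(u,0)+\im d_{x'}f(u,0)\bigr),
$$
so one must show $\im d_{x'}f(u,0)\subseteq\im d_uf(u,0)$ for every $x=(u,0,0,0)\in U'\cap U''$ sufficiently close to $x_0$. This is not visible from the coordinate formula alone, and my plan is to extract it from the local constancy of the Fredholm index stated in~\ref{fredX}. Setting $r(u):=\dim\im d_uf(u,0)$ and $s(u):=\dim(\im d_uf(u,0)+\im d_{x'}f(u,0))$, the two formulae above give
$$
\INDEX(X,X',X'',x)=(\dim L-\dim K)+(s(u)-r(u)).
$$
At $x_0$ one has $df(0,0)=0$, so $s(0)=r(0)=0$ and the index equals $\dim L-\dim K$; by local constancy of the index, $s(u)=r(u)$ on a neighborhood of $x_0$ in $U'\cap U''$, which is exactly the required inclusion. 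Shrinking $X_0$ to this neighborhood completes the proof.
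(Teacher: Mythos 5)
Your construction of $U,U',U''$ is exactly the one in the paper: in the normal coordinates of Lemma~\ref{le:fredX} one takes $U=\{x'=0,x''=0\}$, and then $U'=U\cap X'=\{(u,0,0,f(u,0))\}$, $U''=U\cap X''=\{(u,0,0,0)\}$.  The identity $U'\cap U''=X_0\cap X'\cap X''$ and the kernel identity $T_xU'\cap T_xU''=T_xX'\cap T_xX''$ are correct as you state.

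However, your formula for the cokernel is wrong, and this makes the last step of your argument unsound.  At $x=(u,0,0,0)\in X'\cap X''$ one has
$$
T_xX'=\bigl\{(\hat u,\hat x',0,\;d_uf\,\hat u+d_{x'}f\,\hat x')\bigr\},\qquad
T_xX''=\bigl\{(\hat u,0,\hat x'',0)\bigr\},
$$
so
$$
T_xX'+T_xX''
=\Bigl\{(\hat u,\hat x',\hat x'',\hat\xi)\;\Big|\;\hat\xi-d_{x'}f\,\hat x'\in\im d_uf\Bigr\}.
$$
Every vector $(\hat u,\hat x',\hat x'',\hat\xi)$ is congruent, modulo this sum, to $(0,0,0,\hat\xi-d_{x'}f\,\hat x')$ because the correction $(\hat u,\hat x',\hat x'',d_{x'}f\,\hat x')$ already lies in $T_xX'+T_xX''$ (take $\hat u_1=0$, $\hat u_2=\hat u$).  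Hence
$$
\frac{T_xX}{T_xX'+T_xX''}\;\cong\;\frac{K}{\im d_uf(u,0)},
$$
not $K/\bigl(\im d_uf+\im d_{x'}f\bigr)$.  Your formula double-counts the $\hat x'$-freedom: the term $d_{x'}f\,\hat x'$ is absorbed into the choice of representative, it does not enlarge the image modulo which one quotients.  Since $T_xU/(T_xU'+T_xU'')\cong K/\im d_uf(u,0)$ as well, the two quotients coincide on the nose and no further argument is needed.  Consequently the inclusion $\im d_{x'}f(u,0)\subseteq\im d_uf(u,0)$ is not something that needs to be proved, and it is also not actually established by your index-constancy argument, because that argument feeds the incorrect cokernel formula into the index; with the correct formula the index works out to $\dim L-\dim K$ at every $x$ without imposing any constraint on $d_{x'}f$.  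In short: the construction is right and matches the paper, but the cokernel computation needs to be corrected, after which the index detour disappears.
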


\begin{proof} 
Let $X_0$ be the domain of the normal form coordinates $u,x',x'',\xi$ 
introduced in Lemma~\ref{le:fredX}.  Then 
$$
X_0\cap X'\cap X'' = \{(u,0,0,0)\,|\,f(u,0)=0\},
$$ 
$$
T_xX'\cap T_xX'' = \left\{(\hat u,0,0,0)\,|\,
df(u,0)(\hat u,0)=0\right\},
$$
$$
T_xX'+T_xX'' = \left\{(\hat u,\hat x',\hat x'',\hat\xi)\,\bigg|\,
\hat\xi-\frac{\p f}{\p x'}\hat x' \in\im\frac{\p f}{\p u}\right\}
$$
for $x=(u,0,0,0)\in X_0\cap X'\cap X''$. Hence the submanifolds
\begin{equation}\label{eq:U}
U:=\{(u,0,0,\xi)\},\qquad
U' := \{(u,0,0,f(u,0))\},\qquad 
U'':=\{(u,0,0,0)\}
\end{equation}
satisfy the requirements of the corollary. 
\end{proof}

\para\label{fredh}
A \jdef{morphism} from $(X,X',X'',x_0)$ to $(Y,Y',Y'',y_0)$ 
is a smooth map $h:X\to Y$ such that
$$
h(X')\subset Y',\qquad h(X'')\subset Y'',\qquad h(x_0)=y_0.
$$
The morphism $h$ is called \jdef{exact (at $x_0$)} if the differential 
$dh(x_0):T_{x_0}X\to T_{y_0}Y$ induces isomorphisms
$$
dh(x_0):T_{x_0}X'\cap T_{x_0}X''
\to T_{y_0}Y'\cap T_{y_0}Y''
$$
and 
$$
dh(x_0):\frac{T_{x_0}X}{T_{x_0}X'+T_{x_0}X''}
\to\frac{T_{y_0}Y}{T_{y_0}Y'+T_{y_0}Y''}.
$$
The inclusion of a finite dimensional reduction is an example of an 
exact morphism. 
\arap

\begin{theorem}\label{thm:fredh}
Let $h:(X,X',X'',x_0)\to(Y,Y',Y'',y_0)$ be a morphism 
of Fredholm quadruples. Then the following are equivalent.
\begin{description}
\item[(i)]
$h$ is exact at $x_0$. 
\item[(ii)]
There exist finite dimensional reductions 
$(U,U',U'',x_0)$ of $(X,X',X'',x_0)$ and $(V,V',V'',y_0)$ of 
$(Y,Y',Y'',y_0)$ such that $h$ maps $U$, $U'$, $U''$
diffeomorphically onto $V$, $V'$, $V''$, respectively.
\end{description}
\end{theorem}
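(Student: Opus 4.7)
The direction (ii)$\Rightarrow$(i) is formal: a diffeomorphism $h|_U : U \to V$ carrying $(U',U'')$ to $(V',V'')$ induces isomorphisms $T_{x_0}U' \cap T_{x_0}U'' \to T_{y_0}V' \cap T_{y_0}V''$ and on the quotients $T_{x_0}U/(T_{x_0}U'+T_{x_0}U'') \to T_{y_0}V/(T_{y_0}V'+T_{y_0}V'')$, and by Corollary~\ref{cor:U'U''} these are canonically identified with the corresponding exactness subspaces for $(X,X',X'')$ and $(Y,Y',Y'')$.

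For (i)$\Rightarrow$(ii) I would put both quadruples in normal coordinates via Lemma~\ref{le:fredX}: coordinates $(u,x',x'',\xi)$ on $X$ with defining function $f$, and $(v,y',y'',\eta)$ on $Y$ with defining function $g$, and write $h = (H_v, H_{y'}, H_{y''}, H_\eta)$. Take the canonical reduction $(U,U',U'')$ of Corollary~\ref{cor:U'U''}, where $U = \{x'=x''=0\}$, $U' = \{\xi = f(u,0)\}$, $U'' = \{\xi = 0\}$. With respect to the splittings $E = (E'\cap E'')\oplus C'\oplus C''\oplus K$ and $F = (F'\cap F'')\oplus D'\oplus D''\oplus L$, the inclusions $h(X')\subset Y'$ and $h(X'')\subset Y''$ force a block form on $dh(x_0)$ whose first column is $(A,0,0,0)^T$, with $A : E'\cap E''\to F'\cap F''$ the exactness isomorphism, and whose last column has fourth entry $B : K\to L$, the other exactness isomorphism. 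Hence $dh(x_0)|_{T_{x_0}U}$ with $T_{x_0}U = (E'\cap E'')\oplus K$ is injective, so $h|_U$ is an immersion at $x_0$, $V := h(U)$ is a finite dimensional submanifold of $Y$ near $y_0$, and $h|_U : U \to V$ is a local diffeomorphism. Set $V' := h(U')$ and $V'' := h(U'')$.

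The main step is to verify that $(V,V',V'',y_0)$ is a finite dimensional reduction of $(Y,Y',Y'',y_0)$. A point $h(u,0,0,\xi)$ lies in $Y'$ iff $H_{y''}(u,0,0,\xi) = 0$ together with
\[ F(u,\xi) \;:=\; H_\eta(u,0,0,\xi) - g\bigl(H_v(u,0,0,\xi),\, H_{y'}(u,0,0,\xi)\bigr) \;=\; 0. \]
Since $dg(0) = 0$, $\partial_\xi F(0,0) = B$ is invertible, so the implicit function theorem furnishes a unique local solution $\xi = \xi^*(u)$; since $\xi = f(u,0)$ is a solution (by $h(X')\subset Y'$), $\xi^*(u) = f(u,0)$, and then $H_{y''}(u,0,0,f(u,0)) = 0$ holds automatically. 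This gives $V\cap Y' = V'$ locally, and a parallel IFT argument applied to the two equations defining $Y''$ gives $V\cap Y'' = V''$.

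For $V'\cap V'' = Y_0\cap Y'\cap Y''$ I evaluate $F$ at $\xi = 0$: then $F(u,0) = -g(\Psi(u),0)$ with $\Psi(u) := H_v(u,0,0,0)$ a local diffeomorphism $E'\cap E''\to F'\cap F''$ of linear part $A$, and IFT uniqueness yields $f(u,0)=0 \iff g(\Psi(u),0)=0$, so $h(U'\cap U'')$ coincides with $Y'\cap Y''$ in a small neighborhood $Y_0$ of $y_0$. The tangent space conditions of Corollary~\ref{cor:U'U''} hold at $y_0$ by a direct computation from the block form of $dh(x_0)$; since the invertibility of the $B$-block is an open condition on the base point, shrinking the neighborhoods ensures they persist at every point of $V'\cap V''$. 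The main obstacle is the intersection equality $V\cap Y' = V'$: a priori $h(U)$ might meet $Y'$ outside $h(U')$, but this is prevented by the invertibility of $B$, which encodes the cokernel half of exactness.
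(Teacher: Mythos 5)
Your argument reproduces the paper's setup faithfully through the construction of $V:=h(U)$, $V':=h(U')$, $V'':=h(U'')$: normal coordinates from Lemma~\ref{le:fredX}, the block structure of $dh(x_0)$ forced by $h(X')\subset Y'$, $h(X'')\subset Y''$, and exactness (the paper's equations~(\ref{eq:h1})--(\ref{eq:h4})), and the conclusion that $h|_U$ is an embedding. Your implicit function theorem argument for the set-theoretic equalities $V\cap Y'=V'$, $V\cap Y''=V''$, and $V'\cap V''=Y_0\cap Y'\cap Y''$ is a clean alternative to the paper's estimate chain~(\ref{eq:fu1})--(\ref{eq:fu4}): solving $F(u,\xi)=0$ with $\partial_\xi F(0,0)=B$ invertible, using $\xi=f(u,0)$ as the known solution, and observing the $y''$-constraint follows from~(\ref{eq:h2}). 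That part works and is arguably slicker.

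The gap is in the last step, where you assert the tangent space conditions of Corollary~\ref{cor:U'U''} persist at all $y\in V'\cap V''$ ``since the invertibility of the $B$-block is an open condition.'' This does not suffice. At a nearby point $x=(u,0,0,0)\in U'\cap U''$ the block structure of $dh(x)$ degrades: for instance $\partial h_3/\partial u=-(\partial h_3/\partial\xi)(\partial f/\partial u)$ need no longer vanish because $\partial f/\partial u(u,0)\neq 0$ in general. The equality $T_yV'\cap T_yV''=T_yY'\cap T_yY''$ at such a point reduces, after the parameterizations of $T_yV'$ and $T_yV''$ in the paper's~(B) and~(C), to the nontrivial implication that $(\partial g/\partial v)\hat v=0$ forces $(\partial f/\partial u)\hat u=0$ for the $\hat u$ of~(\ref{eq:uhatB}); this rests on the identity~(\ref{eq:h5}) (obtained by differentiating $h(X')\subset Y'$ in $u$) combined with the smallness of $\partial g/\partial y'$ near $(v,0)$, not on invertibility of $B$ alone. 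The quotient statement $T_yV/(T_yV'+T_yV'')\cong T_yY/(T_yY'+T_yY'')$ likewise requires the explicit verification of the inclusion~(\ref{eq:VY2}), again via~(\ref{eq:h5}). These computations are the bulk of the paper's proof; you cannot replace them with a one-line openness remark, and you cannot appeal to Corollary~\ref{cor:fredh} to propagate exactness either, since that corollary is a consequence of the theorem you are proving. You should carry out the tangent space computation explicitly at the nearby points, or at least identify what algebraic relations make it work.
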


\begin{proof}
We prove that~(ii) implies~(i).  By~(ii), the 
homomorphism $dh(x_0)$ from $T_{x_0}X'\cap T_{x_0}X''$
to $T_{y_0}Y'\cap T_{y_0}Y''$ can be written as the composition
$$
T_{x_0}X'\cap T_{x_0}X''
= T_{x_0}U'\cap T_{x_0}U''
\stackrel{dh(x_0)}{\longrightarrow} T_{y_0}V'\cap T_{y_0}V''
= T_{y_0}Y'\cap T_{y_0}Y''
$$
and hence is an isomorphism. Similarly for the 
map from $T_{x_0}X/(T_{x_0}X'+T_{x_0}X'')$ to
${T_{y_0}Y/(T_{y_0}Y'+T_{y_0}Y'')}$.

We prove that~(i) implies~(ii). 
Let $u,x',x'',\xi$ be the normal coordinates
on $X$ introduced in Lemma~\ref{le:fredX} and choose similar
normal coordinates $v,y',y'',\eta$ on $Y$ at $y_0$. 
Thus 
\begin{equation}\label{eq:Y1}
Y''=\{y'=0,\eta=0\},\qquad Y'=\{y''=0,\eta=g(v,y')\},
\end{equation}
\begin{equation}\label{eq:Y2}
Y'\cap Y'' = \{y'=0,y''=0,\eta=0,g(v,0)=0\}
\end{equation}
for a smooth function $g$ with $g(0,0)=0$ and $dg(0,0)=0$.
In these coordinates the morphism $h=(h_1,h_2,h_3,h_4)$
satsfies
\begin{equation}\label{eq:h1}
h_2(u,0,x'',0)=0,\qquad h_4(u,0,x'',0)=0
\end{equation}
(because $h(X'')\subset Y''$),
\begin{equation}\label{eq:h2}
h_3(u,x',0,f(u,x'))=0,
\end{equation}
\begin{equation}\label{eq:h3}
h_4(u,x',0,f(u,x'))=g(h_1(u,x',0,f(u,x')),h_2(u,x',0,f(u,x')))
\end{equation}
(because $h(X')\subset Y'$), and
\begin{equation}\label{eq:h4}
\det(\p h_1/\p u)(0,0,0,0)\ne 0,\qquad
\det(\p h_4/\p\xi)(0,0,0,0)\ne 0
\end{equation}
(because $h$ is exact).  By~(\ref{eq:h1}) and~(\ref{eq:h4}), the restriction of 
$h$ to a neighborhood of $x_0$ in $U$ is an embedding.
Shrinking the domain $X_0\subset X$ of the normal coordinates,
if necessary, we may assume that $h|U:U\to Y$ is an embedding.
Denote 
$$
V:=h(U),\qquad V':=h(U'),\qquad V'':=h(U'').
$$
We must prove that $(V,V',V'',y_0)$ is a finite dimensional
reduction.   
\begin{description}
\item[(a)]
The set $V$ consists of all quadruples of the form $(v,y',y'',\eta)$ 
where 
$$
y':=h_2(u,0,0,\xi),\qquad y'':=h_3(u,0,0,\xi)
$$
and $u,\xi$ are defined by $h_1(u,0,0,\xi)=v$, $h_4(u,0,0,\xi)=\eta$.
\item[(b)]
The set $V'$ consists of all quadruples of the form $(v,y',0,g(v,y'))$ 
where 
$$
y':=h_2(u,0,0,f(u,0)),\qquad 
h_1(u,0,0,f(u,0)):=v.
$$ 
\item[(c)]
The set $V''$ consists of all quadruples of the form $(v,0,y'',0)$ 
where 
$$
y'':=h_3(u,0,0,0),\qquad 
h_1(u,0,0,0):=v.
$$ 
\end{description}
Thus a point in the intersection $V'\cap V''$ has the form
$(v,0,0,0)$ where $v$ satisfies the conditions
\begin{description}
\item[(i)]  $g(v,0)=0$
\item[(ii)]  If $u$ is defined by $h_1(u,0,0,f(u,0)):=v$
then $h_2(u,0,0,f(u,0))=0$.
\item[(iii)]  If $u$ is defined by $h_1(u,0,0,0):=v$
then $h_3(u,0,0,0)=0$.
\end{description}
We show that~(i) implies~(ii) and~(iii) whenever $v$ is sufficiently small. 
For~(ii) we define $u$ as the unique solution of $h_1(u,0,0,f(u,0))=v$
so that
\begin{equation}\label{eq:gh2h4}
g(v,0)=0,\qquad 
g(v,h_2(u,0,0,f(u,0)))= h_4(u,0,0,f(u,0)).
\end{equation}
We claim that for $v$ sufficiently small this implies $f(u,0)=0$.
To see this we use first that the solution $u$ of the equation
$h_1(u,0,0,f(u,0))=v$ satisfies an inequality
\begin{equation}\label{eq:fu1}
\Norm{u}+\Norm{f(u,0)}\le c\Norm{v}
\end{equation}
for $v$ sufficiently small.  Next we use the fact that
$h_2(u,0,0,0)=0$ and hence 
\begin{equation}\label{eq:fu2}
\Norm{h_2(u,0,0,\xi)}\le c\Norm{\xi}.
\end{equation}
Third, we have that $h_4(u,0,0,0)=0$ and $\p h_4/\p\xi$
is invertible at the point $(0,0,0,0)$, hence also at the point
$(u,0,0,0)$ for $u$ sufficiently small.  Hence we have
an inequality
\begin{equation}\label{eq:fu3}
\Norm{h_4(u,0,0,\xi)}\ge c^{-1}\Norm{\xi}
\end{equation}
for a suitable constant $c>0$ and $u$ and $\xi$ sufficiently small. 
Fourth, since $g(0,0)=0$ and $dg(0,0)=0$, there is an inequality
\begin{equation}\label{eq:fu4}
\Norm{g(v,y')-g(v,0)}\le c\left(\Norm{v}+\Norm{y'}\right)\Norm{y'}
\end{equation}
for a suitable constant $c$.  Putting these four inequalities together
and inserting $\xi=f(u,0)$ and $y'=h_2(u,0,0,f(u,0))$ we deduce
$$
\begin{array}{lclr}
\Norm{f(u,0)}
&\le& 
c \Norm{h_4(u,0,0,f(u,0))} & \mbox{by }(\ref{eq:fu3}) \\
&=& 
c\Norm{g(v,h_2(u,0,0,f(u,0)))-g(v,0)} & \mbox{by }(\ref{eq:gh2h4})  \\
&\le&
c^2\left(\Norm{v}+\Norm{h_2(u,0,0,f(u,0))}\right)
\Norm{h_2(u,0,0,f(u,0))}   & \mbox{by }(\ref{eq:fu4}) \\
&\le&
c^3\left(\Norm{v}+c\Norm{f(u,0)}\right)\Norm{f(u,0)}
& \mbox{by }(\ref{eq:fu2}) \\  
&\le&
(c^3+c^5)\Norm{v}\Norm{f(u,0)} & \mbox{by }(\ref{eq:fu1})
\end{array}
$$
for $v$ sufficiently small. With $(c^3+c^5)\Norm{v}<1$ this implies 
$$
f(u,0)=0
$$
as claimed and hence $h_2(u,0,0,f(u,0))=0$, 
by~(\ref{eq:h1}).  Thus we have proved that~(i) implies~(ii).
Since $f(u,0)=0$ we also deduce that our $u$
is the unique solution of $h_1(u,0,0,0)=v$ needed in~(iii).
Using $f(u,0)=0$ again we obtain $h_3(u,0,0,0)=0$, 
by~(\ref{eq:h2}). Thus we have proved that~(i) implies~(ii)
and~(iii) and hence 
$$
V'\cap V'' = \left\{(v,0,0,0)\,|\,g(v,0)=0\right\}
= Y_0\cap Y'\cap Y''
$$
for a suitable open neighborhood $Y_0$ of $y_0$ in $Y$. 

Next we examine the tangent spaces of $V$, $V'$, and $V''$ 
at a point 
$$
y:=(v,0,0,0)\in V'\cap V'',\qquad g(v,0)=0.
$$
Let $x=(u,0,0,0)\in U'\cap U''$ with $f(u,0)=0$ be the 
element with $h(x)=y$.
\begin{description}
\item[(A)]
The tangent space $T_yV$ consists of all vectors 
$\hat y=(\hat v,\hat y',\hat y'',\hat \eta)$ where 
$$
\hat y':=\frac{\p h_2}{\p\xi}\hat\xi,\qquad 
\hat y'':=\frac{\p h_3}{\p u}\hat u+\frac{\p h_3}{\p\xi}\hat\xi
$$
and $\hat u,\hat\xi$ are defined by
\begin{equation}\label{eq:uhat}
\hat u:= \left(\frac{\p h_1}{\p u}\right)^{-1}
\left(\hat v-\frac{\p h_1}{\p\xi}\hat\xi\right)
\end{equation}
\begin{equation}\label{eq:etahat}
\hat\xi:=\left(\frac{\p h_4}{\p\xi}\right)^{-1}\hat\eta.
\end{equation}
Here and below all partial derivatives of $h$ are evaluated 
at $x=(u,0,0,0)$ and we have used the fact that 
$\p h_2/\p u$ and $\p h_4/\p u$ vanish at $x$, by~(\ref{eq:h1}). 
\item[(B)]
The tangent space $T_yV'$ consists of all vectors 
$\hat y=(\hat v,\hat y',0,\hat \eta)$ where 
\begin{equation}\label{eq:uhat-etahat}
\hat y':=\frac{\p h_2}{\p\xi}\frac{\p f}{\p u}\hat u,\qquad 
\hat\eta:=\frac{\p g}{\p v}\hat v+\frac{\p g}{\p y'}\hat y'
\end{equation}
and $\hat u$ is defined 
\begin{equation}\label{eq:uhatB}
\hat u:= \left(\frac{\p h_1}{\p u}+\frac{\p h_1}{\p\xi}\frac{\p f}{\p u}\right)^{-1}\hat v. 
\end{equation}
Here and below all partial derivatives of $f$ are evaluated
at $(u,0)$ and all partial derivatives of $g$ at $(v,0)$. 
\item[(C)]
The tangent space $T_yV''$ consists of all vectors 
$\hat y=(\hat v,0,\hat y'',0)$ where 
\begin{equation}\label{eq:C}
\hat y'':=-\frac{\p h_3}{\p\xi}\frac{\p f}{\p u}\hat u,\qquad
\hat u:=\left(\frac{\p h_1}{\p u}\right)^{-1}\hat v.
\end{equation}
Note that $-(\p h_3/\p\xi)(\p f/\p u)=\p h_3/\p u$, by~(\ref{eq:h2}). 
\end{description}

We prove that the intersection $T_yV'\cap T_yV''$ consists of 
all vectors $\hat y=(\hat v,0,0,0)$ where $\hat v$ satisfies 
the conditions
\begin{equation}\label{eq:vhat1}
\frac{\p g}{\p v}\hat v=0,
\end{equation}
\begin{equation}\label{eq:vhat2}
 \frac{\p f}{\p u}\hat u=0
\end{equation}
where $\hat u$ is given by~(\ref{eq:uhatB}).
First assume $\hat v$ satisfies~(\ref{eq:vhat1}) and~(\ref{eq:vhat2}).
We show that $\hat y:=(\hat v,0,0,0)\in T_yV'\cap T_yV''$. 
By~(\ref{eq:vhat2}), we have $\hat y'=0$ in~(\ref{eq:uhat-etahat}) and hence,
by~(\ref{eq:vhat1}), $\hat\eta =(\p g/\p v)\hat v=0$  in~(\ref{eq:uhat-etahat}). 
Thus $\hat y\in T_yV'$.
Moreover the vector $\hat u$ in~(\ref{eq:uhatB})
satisfies $(\p h_1/\p u)\hat u=\hat v$ by~(\ref{eq:vhat2}) 
and, also by~(\ref{eq:vhat2}), we have $\hat y''=0$ in~(\ref{eq:C}). 
Thus $\hat y\in T_yV''$.

Conversely assume $\hat y\in T_yV'\cap T_yV''$. 
We show that $\hat y=(\hat v,0,0,0)$ where $\hat v$
satisfies~(\ref{eq:vhat1}) and~(\ref{eq:vhat2}).
That $\hat y$ has the form $(\hat v,0,0,0)$ follows immediately
from~(B) and~(C). Equation~(\ref{eq:vhat1}) follows immediately
from~(B) and the fact that $\hat y'=0$.  To prove that $\hat v$
satisfies~(\ref{eq:vhat2}) we differentiate equation~(\ref{eq:h3})
at the point $x=(u,0,0,0)$ with respect to $u$ to obtain
\begin{equation}\label{eq:h5}
\frac{\p h_4}{\p\xi}\frac{\p f}{\p u}
= \frac{\p g}{\p v}\left(\frac{\p h_1}{\p u} + \frac{\p h_1}{\p\xi}\frac{\p f}{\p u}\right)
+  \frac{\p g}{\p y'} \frac{\p h_2}{\p\xi}\frac{\p f}{\p u}.
\end{equation}
Here we have used the fact that $\p h_2/\p u$ and $\p h_4/\p u$ 
vanish at $x$, by~(\ref{eq:h1}).  Evaluating~(\ref{eq:h5})
in the direction of the vector $\hat u$ in~(\ref{eq:vhat2})
gives
$$
\frac{\p h_4}{\p\xi}\frac{\p f}{\p u}\hat u
= \frac{\p g}{\p v}\hat v + \frac{\p g}{\p y'}\hat y'
= 0.
$$
Since $\p h_4/\p\xi$ is invertible this proves~(\ref{eq:vhat2}). 

We prove that
\begin{equation}\label{eq:TV'TV''}
 T_yV'\cap T_yV''=\left\{(\hat v,0,0,0)\,\Big|\,\frac{\p g}{\p v}\hat v=0\right\},
\end{equation}
i.e.  that~(\ref{eq:vhat1}) implies~(\ref{eq:vhat2}). 
Let $\hat u$ be given by~(\ref{eq:uhatB}) and abbreviate
$$
\hat\xi := \frac{\p f}{\p u}\hat u.
$$
Evaluating~(\ref{eq:h5}) again in the direction of the vector 
$\hat u$ in~(\ref{eq:vhat2}) and using~(\ref{eq:vhat1}) we obtain 
$$
\frac{\p h_4}{\p\xi}\hat\xi
= \frac{\p g}{\p y'} \frac{\p h_2}{\p\xi}\hat\xi.
$$
Since $\p g/\p y'$ vanishes at the origin it is small when
$v$ is small and hence, in this case, $\hat\xi=0$ as claimed. 
This proves~(\ref{eq:TV'TV''}). 
By~(\ref{eq:Y1}), the right hand side of~(\ref{eq:TV'TV''})
is $T_yY'\cap T_yY''$.
This proves that 
$$
T_yV'\cap T_yV''=T_yY'\cap T_yY''.
$$

It remains to prove that 
\begin{equation}\label{eq:VY1}
\frac{T_yV}{T_yV'+T_yV''} 
\cong \frac{T_yY}{T_yY'+T_yY''}. 
\end{equation}
Since $T_yV'\cap T_yV''=T_yY'\cap T_yY''$ and the Fredholm
quadruples $(V,V',V'',y)$ and $(Y,Y',Y'',y)$ have the same Fredholm index
for $y\in V'\cap V''$ sufficiently small, both quotient spaces have the same 
dimension.  Hence condition~(\ref{eq:VY1}) is equivalent to
\begin{equation}\label{eq:VY2}
T_yV\cap(T_yY'+T_yY'') \subset T_yV'+T_yV''. 
\end{equation}
The sum $T_yY'+T_yY''$ is the set of all vectors 
$\hat y=(\hat v,\hat y',\hat y'',\hat \eta)$ that satisfy
\begin{equation}\label{eq:TY}
\hat\eta-\frac{\p g}{\p y'}\hat y' \in\im\left(\frac{\p g}{\p v}\right). 
\end{equation}
To prove~(\ref{eq:VY2}) fix a vector 
$\hat y=(\hat v,\hat y',\hat y'',\hat \eta)\in T_yV\cap(T_yY'+T_yY'')$. 
By~(\ref{eq:TY}) there is a vector $\hat v'$ such that
\begin{equation}\label{eq:v'}
\hat\eta-\frac{\p g}{\p y'}\hat y' = \frac{\p g}{\p v}\hat v'.
\end{equation}
We prove that
\begin{equation}\label{eq:yhat}
(\hat v',\hat y',0,\hat\eta)\in T_yV',\qquad
(\hat v'',0,y'',0)\in T_yV'',\qquad \hat v'' := \hat v - \hat v'.
\end{equation}
To see this define the vectors $\hat u$ and $\hat\xi$ by
\begin{equation}\label{eq:uhatxihat}
\frac{\p h_1}{\p u}\hat u+\frac{\p h_1}{\p\xi}\hat\xi=\hat v,\qquad 
\frac{\p h_4}{\p\xi}\hat\xi=\hat\eta
\end{equation}
as in~(A) so that 
\begin{equation}\label{eq:yhat'}
\hat y' =\frac{\p h_2}{\p\xi}\hat\xi,\qquad 
\hat y'' =\frac{\p h_3}{\p u}\hat u+\frac{\p h_3}{\p\xi}\hat\xi.
\end{equation}
Next define $\hat u'$ and $\hat u''$ by 
\begin{equation}\label{eq:uhat'}
\frac{\p h_1}{\p u}\hat u'
+\frac{\p h_1}{\p\xi}\frac{\p f}{\p u}\hat u':=\hat v',\qquad
\frac{\p h_1}{\p u}\hat u'' := \hat v''.
\end{equation}
Then, by~(\ref{eq:h5}), (\ref{eq:v'}), and~(\ref{eq:uhatxihat}-\ref{eq:uhat'}), we have
\begin{eqnarray*}
\frac{\p h_4}{\p\xi}\left(\frac{\p f}{\p u}\hat u'-\hat\xi\right)
&=& 
\frac{\p g}{\p v}
\left(\frac{\p h_1}{\p u} + \frac{\p h_1}{\p\xi}\frac{\p f}{\p u}\right)\hat u' 
+ \frac{\p g}{\p y'}\frac{\p h_2}{\p\xi}\frac{\p f}{\p u}\hat u'
- \hat\eta  \\
&=& 
\frac{\p g}{\p v}\hat v' + \frac{\p g}{\p y'}\hat y' 
- \hat\eta + \frac{\p g}{\p y'}\frac{\p h_2}{\p\xi}
\left(\frac{\p f}{\p u}\hat u'-\hat\xi\right) \\
&=& 
\frac{\p g}{\p y'}\frac{\p h_2}{\p\xi}
\left(\frac{\p f}{\p u}\hat u'-\hat\xi\right).
\end{eqnarray*}
Since $\p g/\p y'$ is small when $v$ is small this implies
$$
\hat\xi = \frac{\p f}{\p u}\hat u',\qquad \hat u'+\hat u''=\hat u.
$$
Here the last equation follows from the first 
and~(\ref{eq:uhatxihat}) and~(\ref{eq:uhat'}).
Now it follows from~(\ref{eq:yhat'}) that
$$
\hat y'' =\frac{\p h_3}{\p u}\hat u+\frac{\p h_3}{\p\xi}\hat\xi
= \left(\frac{\p h_3}{\p u}+\frac{\p h_3}{\p\xi}\frac{\p f}{\p u}\right)\hat u'
+ \frac{\p h_3}{\p u}\hat u''
= \frac{\p h_3}{\p u}\hat u''.
$$
Combining this with~(C) and~(\ref{eq:uhat'}) we find that 
$(\hat v'',0,\hat y'',0)\in T_yV''$.  Likewise it follows from~(B)
and~(\ref{eq:v'}), (\ref{eq:yhat'}) and~(\ref{eq:uhat'})
that $(\hat v',\hat y',0,\eta')\in T_yV'$. Thus we have 
proved~(\ref{eq:yhat}). This completes the proof 
of~(\ref{eq:VY2}) and the theorem.
\end{proof}

Let $A\subset X$ and $B\subset Y$ be arbitrary subsets. 
Recall that $\phi:A\to B$ is by definition a diffeomorpism 
if it is bijective and $\phi$ and $\phi^{-1}$ are smooth,
i.e.\ for every point $x\in A$ there is a smooth
extension of $\phi$ from a neighbourhood of $x$ in $X$ to $Y$,
and for every point $y\in B$ there is a smooth
extension of $\phi^{-1}$ from a neighbourhood 
of $y$ in $Y$ to $X$ (see~\cite{MILNOR}).

\begin{corollary}\label{cor:fredh}
Let $h:(X,X',X'',x_0)\to(Y,Y',Y'',y_0)$ be an exact morphism 
of Fredholm quadruples. Then the following holds.

\smallskip\noindent{\bf (I)}
$h$ maps a neighborhood of $x_0$ in $X'\cap X''$ 
diffeomorphically onto a neighborhood of $y_0$ in $Y'\cap Y''$. 

\smallskip\noindent{\bf (II)}
$h$ is exact at every point $x\in X'\cap X''$ sufficiently close to $x_0$.
\end{corollary}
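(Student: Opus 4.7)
The plan is to derive both parts directly from Theorem~\ref{thm:fredh} together with the structure of finite dimensional reductions provided by Corollary~\ref{cor:U'U''}. First I would apply Theorem~\ref{thm:fredh} to obtain finite dimensional reductions $(U,U',U'',x_0)$ of $(X,X',X'',x_0)$ and $(V,V',V'',y_0)$ of $(Y,Y',Y'',y_0)$ such that $h$ maps $U$, $U'$, $U''$ diffeomorphically onto $V$, $V'$, $V''$, respectively. By Corollary~\ref{cor:U'U''}, after shrinking one may assume that $U'\cap U''$ is an open neighborhood of $x_0$ in $X'\cap X''$ and $V'\cap V''$ is an open neighborhood of $y_0$ in $Y'\cap Y''$, and that for every $x\in U'\cap U''$ the ambient identifications $T_xU'\cap T_xU''=T_xX'\cap T_xX''$ and $T_xU/(T_xU'+T_xU'')\cong T_xX/(T_xX'+T_xX'')$ hold, with the analogous statement at every $y\in V'\cap V''$.

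For part~(I), from $h(U')=V'$ and $h(U'')=V''$ one has $h(U'\cap U'')\subset V'\cap V''$ at once. Conversely, given $y\in V'\cap V''$, the inverses $(h|_{U'})^{-1}$ and $(h|_{U''})^{-1}$ produce preimages $x'\in U'$ and $x''\in U''$ with $h(x')=h(x'')=y$; since both lie in $U$ and $h|_U$ is injective, $x'=x''\in U'\cap U''$. Hence $h$ restricts to a bijection $U'\cap U''\to V'\cap V''$, and this bijection together with its inverse admits smooth ambient extensions (namely $h|_U$ and $(h|_U)^{-1}$), so it is a diffeomorphism in the sense recalled just before the corollary. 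Identifying $U'\cap U''$ and $V'\cap V''$ with open neighborhoods in $X'\cap X''$ and $Y'\cap Y''$ then yields~(I).

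For part~(II), I would fix $x\in X'\cap X''$ close enough to $x_0$ to lie in $U'\cap U''$ and set $y:=h(x)\in V'\cap V''$. Because $h$ restricts to diffeomorphisms on $U$, $U'$, $U''$, the differential $dh(x)$ sends $T_xU$, $T_xU'$, $T_xU''$ isomorphically onto $T_yV$, $T_yV'$, $T_yV''$, and therefore induces an isomorphism $T_xU'\cap T_xU''\to T_yV'\cap T_yV''$ as well as an isomorphism of quotients $T_xU/(T_xU'+T_xU'')\to T_yV/(T_yV'+T_yV'')$. Since the tangent-space identifications in Corollary~\ref{cor:U'U''} are induced by the inclusions $U\hookrightarrow X$ and $V\hookrightarrow Y$, they intertwine with $dh(x)$; transporting through them produces isomorphisms $T_xX'\cap T_xX''\to T_yY'\cap T_yY''$ and $T_xX/(T_xX'+T_xX'')\to T_yY/(T_yY'+T_yY'')$, which is exactly the exactness of $h$ at $x$.

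The one place requiring some care is this naturality at the end: one must check that the identifications furnished by Corollary~\ref{cor:U'U''} really do come from the ambient inclusions, which is clear from the normal-form construction in Lemma~\ref{le:fredX} and the explicit submanifolds~(\ref{eq:U}), so that the finite dimensional isomorphisms produced by the three diffeomorphisms $h|_U$, $h|_{U'}$, $h|_{U''}$ transfer faithfully to isomorphisms between the intersections and cokernels in the original Hilbert manifolds. Beyond that, the entire argument is a direct unpacking of what Theorem~\ref{thm:fredh} already gives.
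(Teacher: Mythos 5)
Your proof is correct and follows essentially the same approach as the paper: both derive (I) from the fact that $h$ maps the finite-dimensional reduction $U$ diffeomorphically onto $V$ (with $U'\cap U''$ and $V'\cap V''$ being neighborhoods of $x_0$, $y_0$ in $X'\cap X''$, $Y'\cap Y''$), and both derive (II) by observing that the same reductions work at nearby points $x\in U'\cap U''$. Your explicit verification of exactness at $x$ via the tangent-space isomorphisms induced by $h|_U$, $h|_{U'}$, $h|_{U''}$ is just an unpacking of the implication (ii)$\Rightarrow$(i) in Theorem~\ref{thm:fredh}, which the paper cites directly.
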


\begin{proof}
Of course $X'\cap X''$ need not be a manifold. 
Let $(U,U',U'')$ and $(V,V',V'')$ be the finite dimensional reductions
of Theorem~\ref{thm:fredh}.  Then assertion~(I) follows from
the fact that $h^{-1}:V\to U$ extends to a smooth map from
a neighborhood of $V$ to~$X$.  Assertion~(II) follows from 
the equivalence of~(i) and~(ii) in Theorem~\ref{thm:fredh};
namely, if~(ii) holds for $x_0$ then it also holds for every point
$x\in X'\cap X''$ sufficiently close to $x_0$ (with the same 
finite dimensional reductions). This proves the corollary. 
\end{proof}

\begin{theorem}\label{thm:fredstable}
Let $h_\lambda:(X,X'_\lambda,X''_\lambda)\to(Y,Y'_\lambda,Y''_\lambda)$ 
be a smooth  family of morphisms of Fredholm triples parametrized by 
$\lambda\in\Lambda$, where $\Lambda$ is a finite dimensional
manifold, i.e. the map 
$$
h:\Lambda\times X\to\Lambda\times Y,\qquad
h(\lambda,x):=(\lambda,h_\lambda(x)),
$$
is smooth, the sets 
$$
X':=\bigsqcup_\lambda X_\lambda',\qquad
X'':=\bigsqcup_\lambda X_\lambda''
$$
are smooth submanifolds of $\Lambda\times X$, the sets
$$
Y':=\bigsqcup_\lambda Y_\lambda',\qquad
Y'':=\bigsqcup_\lambda Y_\lambda''
$$
are smooth submanifolds of $\Lambda\times Y$, and the 
projections from $X',X'',Y',Y''$ to~$\Lambda$ are submersions.
Let $\lambda_0\in\Lambda$,
$x_0\in X'_{\lambda_0}\cap X''_{\lambda_0}$,
and $y_0:=h_{\lambda_0}(x_0)$. Then the following holds.
\begin{description}
\item[(i)]
The Fredholm indices are related by 
\begin{equation*}
\begin {split}
\INDEX(\Lambda\times X,X',X'',(\lambda_0,x_0))
&=\INDEX(X_{\lambda_0},X'_{\lambda_0},X''_{\lambda_0},x_0)
+\dim\Lambda, \\
\INDEX(\Lambda\times Y,Y',Y'',(\lambda_0,y_0))
&=\INDEX(Y_{\lambda_0},Y'_{\lambda_0},Y''_{\lambda_0},y_0)
+\dim\Lambda.
\end{split}
\end{equation*}
\item[(ii)]
$h_{\lambda_0}$ is exact at $x_0$ if and only if
$h$ is exact at $(\lambda_0,x_0)$.
\end{description}
\end{theorem}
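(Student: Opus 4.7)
The plan is to derive both parts from a snake-lemma / Five-Lemma computation built on the submersion hypothesis. Since each of $X'$, $X''$, $Y'$, $Y''$ projects submersively onto $\Lambda$, one obtains short exact sequences
\begin{equation*}
0 \to T_{x_0}X'_{\lambda_0} \to T_{(\lambda_0,x_0)}X' \to T_{\lambda_0}\Lambda \to 0
\end{equation*}
and three analogues. Let $S^X_{\lambda_0}$ and $S^X$ denote the sum operators $(v',v'')\mapsto v'+v''$ of~\ref{fredE} for the fiber and total Fredholm triples on the $X$-side, and similarly $S^Y_{\lambda_0}$, $S^Y$ on the $Y$-side; the kernels and cokernels of these operators are the intersections and quotients that define the Fredholm index. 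I would combine the four short exact sequences to package $S^X_{\lambda_0}$ and $S^X$ into a short exact sequence of two-term chain complexes whose quotient complex is $\gamma:T_{\lambda_0}\Lambda\oplus T_{\lambda_0}\Lambda\to T_{\lambda_0}\Lambda$, $(\hat\lambda',\hat\lambda'')\mapsto\hat\lambda'+\hat\lambda''$, for which $\ker\gamma\cong T_{\lambda_0}\Lambda$ is the antidiagonal and $\coker\gamma=0$. The associated long exact homology sequence then reads
\begin{equation*}
0 \to \ker S^X_{\lambda_0} \to \ker S^X \to T_{\lambda_0}\Lambda \to \coker S^X_{\lambda_0} \to \coker S^X \to 0,
\end{equation*}
with an identical sequence on the $Y$-side. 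Taking alternating dimensions yields the two index identities of~(i), provided the total sum operator has closed image; I would check this by selecting local sections $\sigma':\Lambda\to X'$ and $\sigma'':\Lambda\to X''$ through $(\lambda_0,x_0)$ and writing
\begin{equation*}
\mathrm{im}\,S^X=\{(\hat\lambda,d\sigma''(\lambda_0)\hat\lambda+w)\,:\,\hat\lambda\in T_{\lambda_0}\Lambda,\,w\in W\},\qquad W:=\mathrm{im}\,S^X_{\lambda_0}+(d\sigma'-d\sigma'')(T_{\lambda_0}\Lambda),
\end{equation*}
where $W$ is a closed finite-codimension subspace of $T_{x_0}X$; the image is then the preimage of $W$ under the continuous map $(\hat\lambda,y)\mapsto y-d\sigma''(\lambda_0)\hat\lambda$, hence closed.

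For part~(ii), the hypothesis $h(\lambda,x)=(\lambda,h_\lambda(x))$ forces $dh(\lambda_0,x_0)$ to act as the identity on each $T_\Lambda$-factor, so $dh$ induces a morphism between the short exact sequences of complexes on the $X$- and $Y$-sides. The resulting commutative ladder between the two six-term homology sequences has the identity as its middle vertical arrow. By the definition of exactness in~\ref{fredh}, $h_{\lambda_0}$ is exact at $x_0$ precisely when the verticals $\ker S^X_{\lambda_0}\to\ker S^Y_{\lambda_0}$ and $\coker S^X_{\lambda_0}\to\coker S^Y_{\lambda_0}$ are both isomorphisms, while $h$ is exact at $(\lambda_0,x_0)$ precisely when $\ker S^X\to\ker S^Y$ and $\coker S^X\to\coker S^Y$ are both isomorphisms. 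I would finish by running the Five Lemma on appropriate five-term subsequences of the two six-term rows (padded with zeros where needed), exploiting the fact that the middle vertical is always an isomorphism, to deduce the equivalence of these two sets of iso conditions in both directions.

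The main obstacle I anticipate is the closed-image verification in the first paragraph: the snake lemma alone produces only an algebraic quotient, and the identification with the topological cokernel required by~\ref{fredX} relies on the explicit graph description above. Once that point is secured, both parts of the theorem reduce to a formal diagram chase.
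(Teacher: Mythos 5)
Your proof is correct, and it takes a genuinely different route from the paper's.

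For part~(i), the paper fits the fiber and total sum operators into a commutative square
$$
\Rectangle{T_{x_0}X'_{\lambda_0}\times T_{x_0}X''_{\lambda_0}}
{}{T_{x_0}X_{\lambda_0}}
{}{}
{T_{(\lambda_0,x_0)}X'\times T_{(\lambda_0,x_0)}X''}{}
{T_{(\lambda_0,x_0)}X}
$$
where the vertical inclusions have indices $-2\dim\Lambda$ and $-\dim\Lambda$ respectively, then quotes additivity of Fredholm index under composition. Your snake-lemma derivation of the six-term sequence
$$0 \to \ker S^X_{\lambda_0} \to \ker S^X \to T_{\lambda_0}\Lambda \to \coker S^X_{\lambda_0} \to \coker S^X \to 0$$
followed by the alternating-dimension count proves the same identity; the two are different packagings of the same homological content. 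Your caution about closed range is well placed (the snake lemma is purely algebraic), though it is also worth noting that for a bounded operator between Hilbert spaces a finite-dimensional \emph{algebraic} cokernel already forces closed range, so the conclusion follows even without the explicit section construction; nevertheless your graph description via $\sigma',\sigma''$ is a clean and self-contained way to see it.

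For part~(ii) the approaches diverge more substantially. The paper runs a direct diagram chase in both directions: it verifies by hand that $dh$ is injective on $T X'\cap TX''$ and surjective onto the quotient (or conversely for the fiber), then invokes the index equality from~(i) to upgrade these to bijections. Your Five Lemma argument on the commutative ladder between the two six-term exact sequences, with the middle vertical being the identity on $T_{\lambda_0}\Lambda$, buys the same conclusion at a more abstract level: sliding a five-term window along the (zero-padded) rows, one pass gives the forward implication and a second pass gives the converse, each time reading off the relevant iso from positions $1,2,4,5$ of the window. This eliminates the chase and replaces the explicit injectivity/surjectivity/index bookkeeping with a single appeal to a standard lemma. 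Both are valid; yours is more conceptual, the paper's more self-contained and elementary. One small notational slip: in your description of $\im S^X$ the expression $d\sigma''(\lambda_0)\hat\lambda$ lives in $T_{(\lambda_0,x_0)}X''$ rather than $T_{x_0}X$, so the intended second coordinate is the $T_{x_0}X$-component of $d\sigma''(\lambda_0)\hat\lambda$ plus $w$; the argument is unaffected.
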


\bigbreak

\begin{proof}
There is a commutative diagram 
$$
\Rectangle{T_{x_0}X'_{\lambda_0}\times T_{x_0}X''_{\lambda_0}}
{}{T_{x_0}X_{\lambda_0}}
{}{}
{T_{(\lambda_0,x_0)}X'\times T_{(\lambda_0,x_0)}X''}{}
{T_{(\lambda_0,x_0)}X}
$$
of Fredholm operators where the horizontal arrows
are as in~\ref{fredE} and the vertical arrows are inclusions. 
The Fredholm index of the top horizontal arrow is 
$\INDEX(X_{\lambda_0},X'_{\lambda_0},X''_{\lambda_0},x_0)$,
the index of the bottom horizontal arrow is
$\INDEX(\Lambda\times X,X',X'',(\lambda_0,x_0))$,
that of the left vertical arrow is $-2\dim\Lambda$,
and that of the right vertical arrow is $-\dim\Lambda$.
(Here we have used the fact that the projections 
$X'\to\Lambda$ and $X''\to\Lambda$ are submersions.)
Hence assertion~(i) follows from the fact that the Fredholm
index of a composition is the sum of the Fredholm indices. 

We prove~(ii). 
Assume first that $h_{\lambda_0}$ is exact at $x_0$
and denote ${y_0:=h_{\lambda_0}(x_0)}$.
We prove that the induced homomorphism 
\begin{equation}\label{eq:hl1}
dh(\lambda_0,x_0):
T_{(\lambda_0,x_0)}X'\cap T_{(\lambda_0,x_0)}X''
\to T_{(\lambda_0,y_0)}Y'\cap T_{(\lambda_0,y_0)}Y''
\end{equation}
is injective.  If 
$(\hat\lambda,\hat x)\in T_{(\lambda_0,x_0)}X'\cap T_{(\lambda_0,x_0)}X''$
and $dh(\lambda_0,x_0)(\hat\lambda,\hat x)=0$ 
then 
$$
\hat\lambda=0,\qquad dh_{\lambda_0}(x_0)\hat x=0.
$$ 
Since the projections $X'\to\Lambda$ and
$X''\to\Lambda$ are submersions we have
$\hat x\in T_{x_0}X'_{\lambda_0}\cap T_{x_0}X''_{\lambda_0}$.
By assumption, this implies $\hat x=0$. 
This shows that~(\ref{eq:hl1}) is injective, as claimed. 
We prove that the induced homomorphism
\begin{equation}\label{eq:hl2}
dh(\lambda_0,x_0):\frac{T_{(\lambda_0,x_0)}(\Lambda\times X)}
{T_{(\lambda_0,x_0)}X'+T_{(\lambda_0,x_0)}X''}
\to \frac{T_{(\lambda_0,y_0)}(\Lambda\times Y)}
{T_{(\lambda_0,y_0)}Y'+T_{(\lambda_0,y_0)}Y''}
\end{equation}
is surjective.  Let
$(\hat\lambda,\hat y)\in T_{(\lambda_0,y_0)}(\Lambda\times Y)$.
Since the projection $X'\to\Lambda$ is a submersion,
there is a vector $\hat x\in T_{x_0}X$ such that 
$(\hat\lambda,\hat x)\in T_{(\lambda_0,x_0)}X'$.
Define $\hat y_0\in T_{y_0}Y$ by
$$
(0,\hat y_0):=(\hat\lambda,\hat y)-dh(\lambda_0,x_0)(\hat\lambda,\hat x).
$$ 
By assumption, there exists a vector 
$\hat x_0\in T_{x_0}X$ such that 
$$
\hat y_0-dh_{\lambda_0}(x_0)\hat x_0\in T_{y_0}Y'+T_{y_0}Y''.
$$
Hence 
$$
(0,\hat y_0)-dh(\lambda_0,x_0)(0,\hat x_0)
\in T_{(\lambda_0,y_0)}Y'+T_{(\lambda_0,y_0)}Y''.
$$
and hence 
$$
(\hat\lambda,\hat y)-dh(\lambda_0,x_0)(0,\hat x_0)
\in T_{(\lambda_0,y_0)}Y'+T_{(\lambda_0,y_0)}Y''.
$$
This shows that~(\ref{eq:hl2}) is surjective, as claimed. 
Moreover, by~(i) the quadruples 
$(\Lambda\times X,X',X'',(\lambda_0,x_0))$ and
$(\Lambda\times Y,Y',Y'',(\lambda_0,y_0))$
have the same Fredholm index.  Hence~(\ref{eq:hl1})
and~(\ref{eq:hl2}) are bijective and so $h$ is exact at
$(\lambda_0,x_0)$.   

Conversely, assume that $h$ is exact at $(\lambda_0,x_0)$ so 
that~(\ref{eq:hl1}) and~(\ref{eq:hl2}) are bijective.
We prove that the induced homomorphism
\begin{equation}\label{eq:h01}
dh_{\lambda_0}(x_0):T_{x_0}X'_{\lambda_0}\cap T_{x_0}X''_{\lambda_0}
\to T_{y_0}Y'_{\lambda_0}\cap T_{y_0}Y''_{\lambda_0}
\end{equation}
is injective. Let
$\hat x\in T_{x_0}X'_{\lambda_0}\cap T_{x_0}X''_{\lambda_0}$
and suppose that $dh_{\lambda_0}(x_0)\hat x=0$.  Then 
$$
(0,\hat x)\in T_{(\lambda_0,x_0)}X'\cap T_{(\lambda_0,x_0)}X'',\qquad
dh(\lambda_0,x_0)(0,\hat x)=(0,0).
$$
Since~(\ref{eq:hl1}) is injective, this implies $\hat x=0$.
This shows that~(\ref{eq:h01}) is injective.
We prove that the induced homomorphism
\begin{equation}\label{eq:h02}
dh_{\lambda_0}(x_0):\frac{T_{x_0}X}
{T_{x_0}X'_{\lambda_0}+T_{x_0}X''_{\lambda_0}}
\to \frac{T_{y_0}Y}
{T_{y_0}Y'_{\lambda_0}+T_{y_0}Y''_{\lambda_0}}
\end{equation}
is surjective. 
Let $\hat y\in T_{y_0}Y$. Since~(\ref{eq:hl2}) 
is surjective, there exists a pair 
$(\hat\lambda,\hat x)\in T_{(\lambda_0,x_0)}(\Lambda\times X)$
such that 
$$
(0,\hat y)-dh(\lambda_0,x_0)(\hat\lambda,\hat x)
\in T_{(\lambda_0,y_0)}Y'+T_{(\lambda_0,y_0)}Y''.
$$
Write
\begin{equation}\label{eq:yhat1}
(0,\hat y)-
dh(\lambda_0,x_0)(\hat\lambda,\hat x)
= (\hat\lambda',\hat y') + (\hat\lambda'',\hat y'')
\end{equation}
where 
$$
(\hat\lambda',\hat y')\in T_{(\lambda_0,y_0)}Y',\qquad
(\hat\lambda'',\hat y'')\in T_{(\lambda_0,y_0)}Y''.
$$
Since the projections ${X'\to\Lambda}$ and $X''\to\Lambda$ 
are submersions, there exist tangent vectors $\hat x',\hat x''\in T_{x_0}X$
such that 
$$
(\hat\lambda',\hat x')\in T_{(\lambda_0,x_0)}X',\qquad
(\hat\lambda'',\hat x'')\in T_{(\lambda_0,x_0)}X''.
$$
Define the tangent vectors $\hat y_0',\hat y_0''\in T_{y_0}Y$
by 
\begin{equation}\label{eq:yhat2}
\begin{split}
(0,\hat y_0') 
&:= (\hat\lambda',\hat y')-dh(\lambda_0,x_0)(\hat\lambda',\hat x')
\in T_{(\lambda_0,y_0)}Y',\\
(0,\hat y_0'')
&:= (\hat\lambda'',\hat y'')-dh(\lambda_0,x_0)(\hat\lambda'',\hat x'')
\in T_{(\lambda_0,y_0)}Y''.
\end{split}
\end{equation}
Since the projections $Y'\to\Lambda$ and $Y''\to\Lambda$ 
are submersions we have 
$$
\hat y_0'\in T_{y_0}Y'_{\lambda_0},\qquad
\hat y_0''\in T_{y_0}Y''_{\lambda_0}.
$$
Moreover, by~(\ref{eq:yhat1}), we have
$$
\hat\lambda+\hat\lambda'+\hat\lambda''=0
$$
and hence, by~(\ref{eq:yhat1}) and~(\ref{eq:yhat2}), 
$$
\hat y - dh_{\lambda_0}(x_0)(\hat x+\hat x'+\hat x'')
= \hat y_0'+\hat y_0''
\in T_{y_0}Y'_{\lambda_0}+T_{y_0}Y''_{\lambda_0}.
$$
Hence~(\ref{eq:h02}) is surjective, as claimed.
Now it follows again from the index identities in~(i) 
that~(\ref{eq:h01}) and~(\ref{eq:h02}) are bijective 
and hence $h_{\lambda_0}$ is exact at $x_0$.  
This proves the theorem. 
\end{proof}

\begin{corollary}\label{cor:fredstable}
Let $h_\lambda:(X,X'_\lambda,X''_\lambda)\to(Y,Y'_\lambda,Y''_\lambda)$
be as in Theorem~\ref{thm:fredstable} and suppose that $h_{\lambda_0}$
is exact at $x_0\in X'_{\lambda_0}\cap X''_{\lambda_0}$.
Then the following holds. 
\begin{description}
\item[(i)]
If $\lambda$ is sufficiently close to $\lambda_0$ and 
$x\in X'_\lambda\cap X''_\lambda$ is sufficiently close to $x_0$
then $h_\lambda$ is exact at $x$. 
\item[(ii)]
If $\Lambda\to Y:\lambda\mapsto y_\lambda$ is a smooth
map such that $y_\lambda\in Y'_\lambda\cap Y''_\lambda$
for every~$\lambda$ then, after shrinking $\Lambda$
if necessary, there exists a unique smooth map
$\Lambda\to X:\lambda\mapsto x_\lambda$ 
such that $x_\lambda\in X'_\lambda\cap X''_\lambda$
and $h_\lambda(x_\lambda)=y_\lambda$ for every~$\lambda$. 
\end{description}
\end{corollary}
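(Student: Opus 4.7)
The plan is to apply Theorem~\ref{thm:fredstable} and Corollary~\ref{cor:fredh} to the total morphism
$$
h : (\Lambda\times X,X',X'',(\lambda_0,x_0))\to(\Lambda\times Y,Y',Y'',(\lambda_0,y_0)),\qquad h(\lambda,x):=(\lambda,h_\lambda(x)),
$$
where $y_0:=h_{\lambda_0}(x_0)$. By Theorem~\ref{thm:fredstable}~(i) both total quadruples are Fredholm, and by Theorem~\ref{thm:fredstable}~(ii) exactness of $h_{\lambda_0}$ at $x_0$ is equivalent to exactness of $h$ at $(\lambda_0,x_0)$. Both assertions of the corollary then follow by transferring back and forth between the parametrized picture and the fiberwise one.

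For part~(i), I would argue that $h$ is exact at $(\lambda_0,x_0)$ by the equivalence just noted, apply Corollary~\ref{cor:fredh}~(II) to conclude that $h$ is exact at every point $(\lambda,x)\in X'\cap X''$ sufficiently close to $(\lambda_0,x_0)$, and then use Theorem~\ref{thm:fredstable}~(ii) in the reverse direction at such a point to transfer exactness of $h$ at $(\lambda,x)$ back to exactness of $h_\lambda$ at~$x$.

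For part~(ii), I would apply Corollary~\ref{cor:fredh}~(I) to the total morphism $h$: it produces a neighbourhood $W$ of $(\lambda_0,x_0)$ in $X'\cap X''$ that is mapped diffeomorphically (in the sense recalled just before Corollary~\ref{cor:fredh}) onto a neighbourhood $W'$ of $(\lambda_0,y_0)$ in $Y'\cap Y''$. After shrinking $\Lambda$, the smooth curve $\lambda\mapsto(\lambda,y_\lambda)$ takes values in $W'$; composing it with a smooth ambient extension of $h^{-1}|_{W'}$ to an open neighbourhood of $W'$ in $\Lambda\times Y$ yields a smooth map $\lambda\mapsto(\phi(\lambda),x_\lambda)$ in $\Lambda\times X$ whose image lies in $X'\cap X''$ and which satisfies $h(\phi(\lambda),x_\lambda)=(\lambda,y_\lambda)$. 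Since $h$ preserves the $\Lambda$-factor, $\phi(\lambda)=\lambda$ and $h_\lambda(x_\lambda)=y_\lambda$; uniqueness is immediate from the injectivity of $h|_W$.

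The proof is a formal consequence of the preceding results, so there is no genuinely hard step. The one subtlety I would watch is the interpretation of \emph{smooth map} into the non-manifold $X'\cap X''$: the Milnor-style notion of diffeomorphism adopted just before the corollary is precisely what makes it legitimate to compose with the ambient smooth extension of the inverse and to declare the resulting lift $\lambda\mapsto x_\lambda$ smooth.
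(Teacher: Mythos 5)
Your proof is correct and takes the same route as the paper, whose own argument consists of the bare citation ``Theorem~\ref{thm:fredstable} and Corollary~\ref{cor:fredh}''; you are simply filling in the details that the citation leaves implicit. In part~(i) the double use of Theorem~\ref{thm:fredstable}~(ii) to move between the fiberwise and parametrized pictures, sandwiched around Corollary~\ref{cor:fredh}~(II), is exactly what is intended, and in part~(ii) the observation that $h$ preserves the $\Lambda$-factor (forcing $\phi(\lambda)=\lambda$) together with the Milnor-style notion of diffeomorphism from the paragraph preceding Corollary~\ref{cor:fredh} is the right way to make the lift smooth and unique. The only thing worth flagging is the tacit hypothesis, shared by the statement itself, that $y_{\lambda_0}$ lies close enough to $y_0:=h_{\lambda_0}(x_0)$ for the curve $\lambda\mapsto(\lambda,y_\lambda)$ to enter the image neighbourhood $W'$ after shrinking $\Lambda$; in the paper's applications one always has $y_{\lambda_0}=y_0$, so this is harmless, but it is good practice to state it.
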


\begin{proof}
Theorem~\ref{thm:fredstable} and Corollary~\ref{cor:fredh}
\end{proof}

\rmk\label{rmk:sh} 
All the results of this section continue to hold
in the complex category, i.e. all Hilbert spaces are complex,
all Hilbert manifolds are complex, all maps are complex,
the family  $\{h_\lambda\}_{\lambda\in\Lambda}$ 
in Theorem~\ref{thm:fredstable} is a holomorphic
family of holomorphic morphisms  of complex Fredholm triples, etc.
As a result the map $\Lambda\to X$ in Corollary~\ref{cor:fredstable}
is holomorphic.
\kmr


\section{Proofs of the main theorems}\label{sec:proof}

\begin{proof}[Proof of Theorem~\ref{thm:stable}.]
Assume the unfolding $(\pi_B:Q\to B,S_*,H_B,b_0)$ 
is iinfinitesimally universal. 
Let $\cU,\cU',\cU''$ be the manifolds in~\ref{cU}
and let $\cV,\cV',\cV''$ be the manifolds in~\ref{cV}
for 
$$
P=Q,\qquad A=B,\qquad \pi_A=\pi_B,\qquad
R_*=S_*,\qquad H_A=H_B,
$$
and an appropriate Hardy decomposition
$Q=Q'\cup Q''$. For $a\in A=B$ denote $b_a:=a$,
let $\alpha_a:\Gamma_a\to Q_{b_a}$ be the inclusion
of $\Gamma_a:=Q'_a\cap Q''_a$ into $Q_{b_a}$,
and abbreviate $\beta_a:=H_B\circ\alpha_a:\Gamma_a\to M$. 
Then the morphism
\begin{equation}\label{eq:QQ}
\cU_a\to\cV_a:(\alpha,b)\mapsto\beta:=H_B\circ\alpha
\end{equation}
from the Fredholm quadruple $(\cU_a,\cU'_a,\cU''_a,(\alpha_a,b_a))$
to $(\cV_a,\cV'_a,\cV''_a,\beta_a)$ is exact for $a=a_0=b_0$,
by Theorems~\ref{thm:U} and~\ref{thm:V} (see~\ref{infuniv}).
The same theorems assert that the family~(\ref{eq:QQ})
of morphisms of Fredholm quadruples satisfies the 
requirements of Theorem~\ref{thm:fredstable}. 
Hence it follows from Corollary~\ref{cor:fredstable}
that~(\ref{eq:QQ}) is exact for $a=b$ sufficiently close to
$a_0=b_0$.  Hence, again by Theorems~\ref{thm:U} and~\ref{thm:V},
the unfolding $(\pi_B:Q\to B,S_*,H_B,b)$ 
is infinitesimally universal for $b$ sufficiently close 
to $b_0$. This proves the theorem.
\end{proof}

\begin{proof}[Proof of Theorem~\ref{thm:universal}.]
We proved `only if' in Section~\ref{sec:unfolding}.  To prove `if'
assume that $(\pi_B:Q\to B,S_*,H_B,b_0)$ is an infinitesimally 
universal unfolding.  We prove that it is universal.
Let $(\pi_A:P\to A,R_*,H_A,a_0)$ be another unfolding
of maps and $f_0:P_{a_0}\to Q_{b_0}$ be a fiber isomorphism.
Choose a Hardy decomposition $P=P'\cup P''$
and open subsets $U'$, $U''$, and $U:=U'\cap U''$
of $Q$ as in~\ref{UQ}, \ref{Hardy}, and~\ref{f=id}.
Let $\cU$, $\cU'$, $\cU''$ be as in~\ref{cU}
and $\cV$, $\cV'$, $\cV''$ be as in~\ref{cV}.
Then
$$
(\alpha_0:=f_0|\Gamma_{a_0},b_0)\in\cU'_{a_0}\cap\cU''_{a_0},\qquad
\beta_0:=H_A|\Gamma_{a_0}\in\cV'_{a_0}\cap\cV''_{a_0}.
$$
Since the unfolding $(\pi_B,S_*,H_B,b_0)$ is
infintesimally universal the map 
$$
\cU_{a_0}\to\cV_{a_0}:(\alpha,b)\mapsto \beta:=H_B\circ\alpha
$$
is an exact morphism of Fredholm triples as in~\ref{fredh} 
(see~\ref{infuniv}).  By Theorems~\ref{thm:U} and~\ref{thm:V}
the family of maps 
$$
\cU_a\to\cV_a:(\alpha,b)\mapsto \beta:=H_B\circ\alpha,
$$
parametrized by $a\in A$ satisfies the hypotheses of 
Theorem~\ref{thm:fredstable} (in the complex category).  Moreover, there is 
a holomorphic map 
$$
A\to \cV:a\mapsto(a,\beta_a),\qquad 
\beta_a:=H_A|\Gamma_a\in\cV'_a\cap\cV''_a.
$$
Hence it follows from Corollary~\ref{cor:fredstable} and Remark~\ref{rmk:sh}
that, after shrinking $A$ if necessary, there exists a
unique holomorphic map
\begin{equation}\label{eq:2ndmap}
A\to\cU:a\mapsto(a,\alpha_a,b_a),\qquad 
(\alpha_a,b_a)\in\cU'_a\cap\cU''_a,
\end{equation}
such that
$
\beta_a=H_B\circ\alpha_a
$
for every $a\in A$.  Define $\phi:A\to B$ by $\phi(a):=b_a$, 
for every $a\in A$ let $f_a:P_a\to Q_{b_a}$ be the unique
fiber isomorphism with $f_a|\Gamma_a=\alpha_a$, and
define $\Phi:P\to Q$ by $\Phi|P_a:=f_a$.  Then $\phi$ is holomorphic.
That the restriction of $\Phi$ to $\INT(P')$ is holomorphic
follows from~\cite[Lemma~10.18]{RS}. To prove that the restriction 
of $\Phi$ to $\INT(P'')$ is holomorphic we write it as the composition
$$
\INT(P'')\to A\times\Omega\to \cU''\times\Omega\to Q
$$
where the first map is $\pi_A\times\rho$,
the second map is the product of~(\ref{eq:2ndmap}) 
with the identity, and the third map is the evaluation map 
$(a,f'',z)\mapsto f''(\rho_a^{-1}(z))$. 
All four spaces are complex manifolds and
all three maps are holomorphic. 
The argument is as in Step~3 in the
proof of~\cite[Theorem~5.3]{RS}.
It is important to remember that the complex structure
on the factor $\Omega$ depends on $a\in A$ and is twisted
by $\eta(a,\hat{a})$ as in~(\ref{eq:JP}).
This proves that $\Phi$ is holomorphic on $P\setminus\p P'$.
Since $\Phi$ is continuous, it is holomorphic everywhere.
This proves the theorem.
\end{proof}

\begin{proof}[Proof of Theorem~\ref{thm:exists}.]
Given the work done in Section~\ref{sec:unfolding} it remains to prove `if'
under the assumptions that $(\Sigma,s_{0,*},\nu_0,j_0,v_0)$ 
is a regular stable map and the underlying marked nodal 
Riemann surface $(\Sigma,s_{0,*},\nu_0,j_0)$ is still stable. 
Let $(\pi_A:P\to A,R_*,a_0)$ be a universal unfolding of this 
marked nodal Riemann surface (in the sense of~\cite[Definition 5.1]{RS})
and $w_0:\Sigma\to P_{a_0}$ be a desingularization of the 
central fiber.  Define the holomorphic map $h_0:P_{a_0}\to M$
by $h_0\circ w_0:=v_0$.  Choose a Hardy decomposition
$$
P = P'\cup P'',\qquad
\Gamma_a:=P_a\cap P'\cap P'',
$$
as in~\ref{Hardy}, fix an integer $s+1/2>1$, 
and define  $\cV$, $\cV'$, $\cV''$ as in~\ref{cV}. 
The desingularization $w_0:\Sigma\to P_{a_0}$
induces a decomposition 
$$
\Sigma=\Sigma'\cup\Sigma'',\qquad \Sigma'\cap\Sigma''
=\p\Sigma'=\p\Sigma'',
$$
with $\Sigma':=w_0^{-1}(P')$ and $\Sigma'':=w_0^{-1}(P'')$.
As in the proof of Theorem~\ref{thm:V} the map 
$w_0^{-1}\circ\iota_{a_0}$ is a diffeomorphism from $\Gamma$
in~(\ref{eq:trivialize}) to $\Sigma'\cap\Sigma''$ and, to simplify the notation, 
we assume that $\Gamma=\Sigma'\cap\Sigma''$ so that
$\iota_{a_0}=w_0|\Gamma:\Gamma\to P_{a_0}$.   
The infinitesimally universal unfolding of the stable 
map $(\Sigma,s_{0,*},\nu_0,j_0,v_0)$ is the tuple
$$
(\pi_B:Q\to B,S_*,H_B,b_0)
$$
defined by
\begin{equation}\label{eq:BQS}
\renewcommand{\arraystretch}{1.9}
\begin{array}{l}
B:=\cV'\cap\cV'', \quad
Q:=\bigl\{(p,\beta)\in P\times B\,\big| \,
\beta\in\cV'_{\pi_A(p)}\cap\cV''_{\pi_A(p)}\,\bigr\}, \\
\pi_B(p,\beta):=(\pi_A(p),\beta),\qquad
b_0:=(a_0,\beta_0), \\
S_\si:= \bigl\{(p,\beta)\in Q\,\big|\,p\in R_\si\bigr\},\qquad
H_B(p,\beta):= h_\beta(p),
\end{array}
\end{equation}
where $h_\beta:P_a\to M$ is the unique holomorphic map 
with 
$$
h_\beta|\Gamma_a=\beta.
$$  
As in~\ref{cV}, $\cV$ is a complex Hilbert manifold and by part~(iii)
of Theorem~\ref{thm:V} the sets  $\cV'$ and $\cV''$ are complex 
submanifolds of $\cV$.   By part~(i) of Theorem~\ref{thm:transverse}, 
the submanifolds $\cV'$ and $\cV''$ intersect transversally at 
$(a_0,\beta_0)$ and hence $B=\cV'\cap\cV''$ is a complex
submanifold of $\cV$ (after shrinking $\cV'$ 
and $\cV''$ if necessary).  By Theorem~\ref{thm:V}, 
$B$ has dimension
\begin{equation}\label{eq:dimB}
\dim_\C B = (\sm-3)(1-\sg) + \inner{c_1}{\sd} + \sn.
\end{equation}

We prove that $Q$ is a complex submanifold of $P\times\cV$.
Define
$$
 f:B\to A \qquad\mbox{by}\qquad  f(a,\beta):=a
$$
for $(a,\beta)\in B=\cV'\cap\cV''$.
Then the projection $\pi_B:Q\to B$ is the \jdef{pullback}
of the projection $\pi_A:P\to A$ by the map $f$, i.e. 
$Q$ is the preimage of the diagonal in $A\times A$ under the 
holomorphic map
$$
\pi_A\times f:P\times B\to A\times A
$$
and $\pi_B$ is the restriction of projection on the first factor to $Q$.
The map $\pi_A\times f$ is transverse to the diagonal if and only if 
\begin{equation}\label{eq:TPV}
T_{\pi_A(p)}A = \mathrm{im}\,d\pi_A(p)
+ d\pi_\cV(a,\beta)\left(T_{(a,\beta)}\cV'\cap T_{(a,\beta)}\cV''\right)
\end{equation}
for every $p\in P$ and every $\beta\in\cV'_a\cap\cV''_a$ 
with $a=\pi_A(p)$, where $\pi_\cV:\cV\to A$ denotes the
obvious projection.  Equation~(\ref{eq:TPV}) follows 
immediately from part~(ii) of Theorem~\ref{thm:transverse}. 
Hence $Q$ is a complex submanifold of $P\times\cV$ 
and the projection $\pi_B:Q\to B$ is holomorphic. 
We prove that
the map $\pi_B$ is a nodal family of Riemann surfaces
in Lemma~\ref{le:pullback} below.
The subset $S_\si\subset Q$ is the transverse
intersection of the complex submanifolds 
$R_\si\times\cV$ and $Q$, and hence is a complex 
submanifold of $Q$ (of codimension one).  

We prove that $H_B:Q\to M$ is holomorphic.  
For this we use the Hardy decomposition
$$
Q=Q'\cup Q'',\qquad
Q':=Q\cap(P'\times\cV),\qquad
Q'':=Q\cap(P''\times\cV).
$$
That $H_B$ is holomorphic in the interior of $Q'$ follows from 
Lemma~\ref{le:localmodel}~(iii).  To prove that $H_B$ 
is holomorphic in the interior of $Q''$ write it 
as the composition
$$
\INT(Q'') \to B\times\Omega \to \cV''\times\Omega \to M
$$
where the first map is given by a Hardy trivialization 
$\pi_B\times\rho$, the second by the inclusion $B\to\cV''$,
and the third is the evaluation map
$
((a,\beta),z)\mapsto(h''_\beta(\rho_a^{-1}(z)))
$
where $h''_\beta:P_a''\to M$ is the unique holomorphic map
with $h''_\beta|\Gamma_a=\beta$. 
As in the proof of Theorem~\ref{thm:universal} all four
spaces are complex manifolds and all three maps are holomorphic.
This proves that $H_B$ is holomorphic in $Q\setminus\p Q'$. 
Since $H_B$ is continuous it is holomorphic everywhere. 

We prove that the unfolding $(\pi_B:Q\to B,S_*,H_B,b_0)$
is infinitesimally universal.  Note that 
$
Q_{b_0} = P_{a_0}\times\{\beta_0\}
$
and define $u_0:\Sigma\to Q_{b_0}$ by
$$
u_0(z) := (w_0(z),\beta_0).
$$
Since $h_{\beta_0}\circ w_0=v_0$ we have 
$$
H_B\circ u_0(z) = H_B(w_0(z),\beta_0)
=h_{\beta_0}(w_0(z)) = v_0(z)
$$
for every $z\in\Sigma$. As before we denote by 
$f:B=\cV'\cap\cV''\to A$ the obvious projection
and by $b_0=(a_0,\beta_0)\in B$ the base point.
Then the kernel of the derivative $df(b_0):T_{b_0}B\to T_{a_0}A$ 
is the intersection $T_\beta\cV'_{a_0}\cap T_\beta\cV''_{a_0}$. 
Hence, for $z\in\Sigma$ we have $p:=w_0(z)\in P_{a_0}$, 
$q:=u_0(z)=(w_0(z),\beta_0)\in Q_{b_0}$, and 
$$
\ker d(f\circ\pi_B)(q) = \ker d\pi_A(p)\times 
\left(T_\beta\cV'_{a_0}\cap T_\beta\cV''_{a_0}\right).
$$
The restriction of
$dH_B(q):T_qQ\to T_{v_0(z)}M$
to this space is 
$$
dH_B(u_0(z))(\hat p,\hat\beta) = \hat v(z) + dv_0(z)\hat z
$$
where $\hat z\in T_z\Sigma$ is the unique element with
$dw_0(z)\hat z=\hat p$ and $\hat v\in\Omega^0(\Sigma/\nu,v_0^*TM)$
is the unique vector field along $v_0$ that satisfies the nodal 
condition, belongs to the kernel of $D_{v_0}$, and 
satisfies $\hat v|\Gamma=\hat\beta\circ\iota_{a_0}$.  

We prove that the induced map
\begin{equation}\label{eq:keruv}
dH_B(u_0):\ker D_{u_0}\to\ker\,D_{v_0}
\end{equation}
is bijective. The domain of $\cD_{u_0}$ is the space
$$
\cX_{u_0}:=\left\{(\hat w,\hat b)
\in\Omega^0(\Sigma/\nu,w_0^*TP)\times T_bB\,\bigg|\,
\begin{aligned}
&\hat w(s_{0,\si})\in T_{w_0(s_{0,\si})}R_\si \\
&d\pi_A(w_0)\hat w\equiv df(b_0)\hat b
\end{aligned}
\right\},
$$
the target space can be identified with
$$
\cY_{u_0} = \cY_{w_0} = \left\{\eta\in\Omega^{0,1}(\Sigma,w_0^*TP)\,|\,
d\pi_A(w_0)\eta\equiv 0\right\},
$$
and the operator is given by
$$
D_{u_0}(\hat w,\hat b) := D_{w_0}\hat w.
$$ 
Since the unfolding $(\pi_A,R_*,a_0)$ 
(of marked nodal Riemann surfaces) is universal, the operator  
$$
D_{w_0}:
\cX_{w_0}:=\left\{\hat w\in\Omega^0(\Sigma/\nu,w_0^*TP)\,\bigg|\,
\begin{aligned}
&\hat w(s_{0,\si})\in T_{w_0(s_{0,\si})}R_\si \\
&d\pi_A(w_0)\hat w\equiv\mbox{ constant}
\end{aligned}
\right\}
\to\cY_{u_0}
$$
is bijective.  
It follows that the projection $(\hat w,\hat b)\mapsto\hat b$
is an isomorphism from the kernel of $D_{u_0}$ to the kernel 
of the linear map $df(b_0):T_{b_0}B\to T_{a_0}A$. 
Now recall that $f:B=\cV'\cap\cV''\to A$ denotes the obvious projection.  
Then the kernel of $df(a_0,\beta_0):T_{(a_0,\beta_0)}(\cV'\cap\cV'')\to T_{a_0}A$
is the intersection $T_{\beta_0}\cV'_{a_0}\cap T_{\beta_0}\cV'_{a_0}$
which, by Theorem~\ref{thm:V}~(ii), is isomorphic to the kernel of $D_{v_0}$.
The composite isomorphism 
$$
\ker\,D_{u_0}\to\ker\,df(a_0,\beta_0)\to \ker\,D_{v_0}
$$
is given by $(0,\hat b)\mapsto\hat\beta\mapsto\hat v$
where $\hat b=(0,\hat\beta)$ and $\hat v$ is the unique
element in the kernel of $D_{v_0}$ with 
$\hat v|\Gamma=\hat\beta\circ\iota_{a_0}$. 
This map is precisely~(\ref{eq:keruv}) which is therefore 
an isomorphism. 

Now it follows from Theorem~\ref{thm:transverse}~(ii)
that the nodal family $(\pi_B,S_*,b_0)$ is regular nodal, 
i.e.~the projections of the critical manifolds intersect transversally 
at $b_0$.
Hence, by~\cite[Lemma~12.2]{RS},
the operator $D_{u_0}$ has Fredholm index 
\begin{eqnarray*}
\INDEX_\C(D_{u_0})
&=&
3-3\sg - \sn + \dim_\C B \\
&=&
\sm(1-\sg) + \inner{c_1}{\sd} \\
&=&
\INDEX_\C(D_{v_0}).
\end{eqnarray*}
Here the second equality follows from~(\ref{eq:dimB}). 
Since the kernels are isomorphic it follows that cokernels
of $D_{u_0}$ and $D_{v_0}$ have the same dimensions. 
Moreover, the induced homomorphism
$
dH_B(u_0):\coker D_{u_0}\to\coker\,D_{v_0}
$
is surjective, by Remark~\ref{rmk:regular}, 
and hence is bijective. This completes the proof 
of Theorem~\ref{thm:exists}. 
\end{proof}

\begin{lemma} \label{le:pullback} 
Let
$\pi_A:P\to A$ be a nodal family and $f:B\to A$
be a holomorphic map such that 
$f\times \pi_A:B\times P\to A\times A$
is transverse to the diagonal. Then the 
pullback  $\pi_B:Q\to B$ of $\pi_A$ by $f$
is a nodal family. 
\end{lemma}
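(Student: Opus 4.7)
The plan is to verify the local normal forms of a nodal family pointwise on $Q$. First, since $f\times\pi_A$ is transverse to the diagonal $\Delta_A\subset A\times A$, the preimage theorem makes $Q=(f\times\pi_A)^{-1}(\Delta_A)$ a complex submanifold of $B\times P$ and $\pi_B$ a holomorphic map. Properness of $\pi_B$ is inherited from $\pi_A$: for compact $K\subset B$, the set $\pi_B^{-1}(K)$ is a closed subset of the compact set $K\times\pi_A^{-1}(f(K))$. It then remains to exhibit appropriate holomorphic charts near each point of $Q$.

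Fix $(b_0,p_0)\in Q$ and set $a_0:=f(b_0)=\pi_A(p_0)$. Unwinding transversality to the diagonal at this point gives
$$
\im d\pi_A(p_0)+\im df(b_0)=T_{a_0}A.
$$
If $p_0$ is a smooth point of $\pi_A$, then $d\pi_A(p_0)$ is already surjective and the projection $\mathrm{pr}_B|_Q$ is a submersion at $(b_0,p_0)$; the holomorphic submersion normal form provides the required smooth chart.

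The content lies in the nodal case. I would choose standard nodal coordinates $(x,y,t):P\to\D^2\times\C^{\sa-1}$ and $(z,t):A\to\D\times\C^{\sa-1}$ near $p_0$ and $a_0$ in which $\pi_A(x,y,t)=(xy,t)$. Since then $\im d\pi_A(p_0)=\{0\}\times\C^{\sa-1}$, the transversality identity forces $z\circ f:B\to\C$ to be a submersion at $b_0$. Pick holomorphic coordinates $(\zeta,\tau):B\to\C\times\C^{\sb-1}$ centered at $b_0$ with $\zeta=z\circ f$, and write $f=(\zeta,\phi(\zeta,\tau))$ for a unique holomorphic $\phi$. Then
$$
Q=\bigl\{(\zeta,\tau,x,y,t)\,:\,xy=\zeta,\ t=\phi(\zeta,\tau)\bigr\}
$$
is the graph of the holomorphic parameterization $(x,y,\tau)\mapsto(xy,\tau,x,y,\phi(xy,\tau))$, so $(x,y,\tau):Q\to\D^2\times\C^{\sb-1}$ is a nodal coordinate system on $Q$ in which $\pi_B(x,y,\tau)=(xy,\tau)$ --- the required nodal model.

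The main obstacle is thus the nodal case; the decisive observation is that transversality is exactly the condition making the nodal deformation parameter $z$ pull back by $f$ to a submersion on $B$, after which the factorization $xy=\zeta$ carries over verbatim to $\pi_B$.
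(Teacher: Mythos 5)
Your proof is correct and follows essentially the same route as the paper's: reduce to the local nodal normal form, observe that transversality at a nodal point is exactly the statement that $z\circ f$ (the paper's $\zeta$) is a submersion at $b_0$, and build nodal coordinates on $Q$ from $(x,y)$ together with the remaining coordinates on $B$. The explicit graph parameterization and the short remark on properness of $\pi_B$ are minor additions; the underlying argument is the same.
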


\begin{proof} 
The pullback is defined by
$$
Q:=\left\{(b,p)\in B\times P\,|\,\pi_A(p)=f(b)\right\},\qquad
\pi_B(b,p):=b.
$$
The condition that $f\times \pi_A:B\times P\to A\times A$
is transverse to the diagonal implies that $Q$ is a submanifold
of $B\times P$.
We prove that  
\begin{description}
\item[(i)] 
$(b,p)\in Q$ is a regular point of $\pi_B$ 
if $p$ is a regular point of $\pi_A$, and
\item[(ii)] 
$(b,p)\in Q$ is a nodal point of $\pi_B$ 
if $p$ is a nodal point of $\pi_A$.
\end{description}
To prove~(i) assume w.l.o.g.~that $P=\C\times A$ 
so $Q=\C\times\mathrm{graph}(f)$.
Then $\pi_B(b,z,f(b))=b$ so $\pi_B$ is a submersion.

To prove~(ii) assume that w.l.o.g.~that  
$P=\C\times \C\times U$,
$A=\C\times U$,
$\pi_A(x,y,u)=(xy,u)$, and 
$f(b)=(\zeta(b),g(b))\in\C\times U$. 
Then 
$$
  Q=\{(b,x,y,u)\,|\,xy=z=\zeta(b),\;\; u=g(b)\}.
$$
The condition that $f\times\pi_A$ is transverse to the diagonal
at  $(b,x,y,u)\in Q$ is that for all
$
(\hat{z}_1,\hat{u}_1,\hat{z}_2,\hat{u}_2)\in
T_{(z,u)}A\times T_{(z,u)}A=\C\times T_uU\times\C\times T_uU
$
the equations
\begin{eqnarray*}
\hat{z}_1&=&d\zeta(b)\hat{b}+\hat{z}\\
\hat{u}_1&=& dg(b)\hat{b}+\hat{u}\\
\hat{z}_2&=&\hat{x}y+x\hat{y}+\hat{z}\\
\hat{u}_2&=&\hat{v}+\hat{u}
\end{eqnarray*}
have a solution 
$$
\hat{b}\in T_bB,\quad  
(\hat{x},\hat{y},\hat{v})\in T_{(x,y,u)}P=\C^2\times T_uU,\quad
(\hat{z},\hat{u})\in T_aA=\C\times T_uU.
$$
At a nodal point we have $x=y=0$ so transversality implies
that $d\zeta(b)\ne0$. This implies that there is a coordinate system
on $B$ with $\zeta$ as its first element. 
The pullback to $Q$ of the coordinates other than
$\zeta$ together with the functions $x$ and $y$ give the desired nodal
coordinates on $Q$.
This proves~(ii) and the lemma.
\end{proof}

\begin{corollary}\label{cor:transverse-core}
Let $\pi_A:P\to A$ be regular nodal family
and $f:B\to A$ be a holomorphic map which is transverse
to the core $A_0$ of $\pi_A$. Then the hypothesis of
Lemma~\ref{le:pullback} holds,
the pullback $\pi_B:Q\to B$ is regular nodal,
and its core is $B_0:=f^{-1}(A_0)$.
\end{corollary}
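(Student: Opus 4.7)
The proof unfolds in three stages: verify the hypothesis of Lemma~\ref{le:pullback}; show the pullback is regular nodal; and identify its core as $f^{-1}(A_0)$. The common engine is the following elementary fact. If $Z_1,\dots,Z_\sk$ are complex submanifolds of $A$ of codimension one intersecting transversally, with $A_0:=\bigcap_i Z_i$, then at every $a\in A_0$ the natural map
\[
T_aA/T_aA_0 \;\stackrel{\cong}{\longrightarrow}\; \bigoplus_{i=1}^\sk T_aA/T_aZ_i
\]
is an isomorphism. Consequently, transversality of $f$ to $A_0$ at a point $b\in f^{-1}(A_0)$ is equivalent to simultaneous transversality of $f$ to each $Z_i=\pi_A(C_i)$ at $b$, and moreover forces $df(b)^{-1}(T_{f(b)}A_0)$ to have codimension $\sk$ in $T_bB$. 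By openness of transversality, these properties persist in a neighbourhood of $f^{-1}(A_0)$; shrinking $B$ (as is implicit throughout the paper), I may assume them globally.

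To verify the hypothesis of Lemma~\ref{le:pullback}, fix $(b,p)\in B\times P$ with $\pi_A(p)=f(b)=:a$. The required transversality of $f\times\pi_A$ to the diagonal in $A\times A$ amounts to $\mathrm{im}\,df(b)+\mathrm{im}\,d\pi_A(p)=T_aA$. If $p$ is a regular point of $\pi_A$ this is immediate since $d\pi_A(p)$ is already surjective. If $p\in C_i$ is a nodal point then $\mathrm{im}\,d\pi_A(p)=T_a\pi_A(C_i)$, and the condition reduces to transversality of $f$ to $\pi_A(C_i)$ at $b$, which is covered by the engine above.

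For the second and third stages, use the description of the critical set of $\pi_B$ read off from the proof of Lemma~\ref{le:pullback}(ii): near the core it is the disjoint union of the submanifolds $D_i:=\{(b,p)\in Q\mid p\in C_i\}$, and the pullback construction gives $\pi_B(D_i)=f^{-1}(\pi_A(C_i))$. Transversality of $f$ to each $\pi_A(C_i)$ makes $\pi_B(D_i)$ a codimension-one complex submanifold of $B$. To show the $\sk$ submanifolds $\pi_B(D_i)$ intersect transversally at a point $b\in\bigcap_i\pi_B(D_i)$, I compute
\[
\bigcap_{i=1}^\sk T_b\pi_B(D_i) \;=\; \bigcap_{i=1}^\sk df(b)^{-1}\bigl(T_{f(b)}\pi_A(C_i)\bigr) \;=\; df(b)^{-1}\bigl(T_{f(b)}A_0\bigr),
\]
which by the engine has codimension $\sk$ in $T_bB$. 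This is precisely transversality and simultaneously identifies $\bigcap_i\pi_B(D_i)=f^{-1}(A_0)=B_0$ as the core of $\pi_B$.

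No serious obstacle is anticipated; the entire argument is bookkeeping with tangent spaces, powered by the equivalence between transversality to $A_0$ and simultaneous transversality to the $\pi_A(C_i)$. The only mild subtlety is the shrinking convention that upgrades a property holding at $B_0$ to one holding throughout a neighbourhood, which matches the local nature of ``regular nodal'' as used elsewhere in the paper.
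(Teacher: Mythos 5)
Your proof is correct and follows essentially the same route as the paper's. Both arguments reduce transversality of $f$ to $A_0$ to transversality of $f$ to each $\pi_A(C_\si)$ via the fact that, by regular nodality, $T_aA_0=\bigcap_\si T_a\pi_A(C_\si)$; both invoke Lemma~\ref{le:pullback} for the identification of the singular set of $\pi_B$ and for the nodal coordinate structure; and both then deduce that the core of $\pi_B$ is $f^{-1}(A_0)$. The only stylistic difference is that the paper carries out the regular-nodal step by explicitly producing coordinates $z_1,\dots,z_\sk$ with $\pi_A(C_\si)=\{z_\si=0\}$ and checking that the pullbacks $f^*z_\si$ remain a partial coordinate system giving nodal charts on $Q$, whereas you phrase the same codimension count as the tangent-space isomorphism $T_aA/T_aA_0\cong\bigoplus_\si T_aA/T_a\pi_A(C_\si)$ and the resulting computation that $\bigcap_\si T_b\pi_B(D_\si)=df(b)^{-1}(T_{f(b)}A_0)$ has codimension $\sk$. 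The coordinate version is slightly more self-contained in that it exhibits the nodal charts on $Q$ directly; your version is more compact and more clearly isolates the linear-algebra ``engine,'' at the small cost of leaning a bit more heavily on the citation to the proof of Lemma~\ref{le:pullback} for the description of the critical set of $\pi_B$.
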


\begin{proof}  Denote by 
$
C_1,\dots,C_\sk\subset P
$ 
the components of the singular set of $\pi_A$.
The proof of Lemma~\ref{le:pullback} shows that the
hypothesis that $f\times\pi_A$ is transverse to the diagonal
is equivalent to the hypothesis that $f$ is transverse
to each $\pi_A(C_\si)$. The hypothesis that $\pi_A$ is regular
nodal is that
these projections  $\pi_A(C_\si)$ of the critical manifolds intersect transversally.
Hence $T_aA_0=\bigcap_\si T_a\pi_A(C_\si)$ so $f$ is certainly
transverse to each $\pi_A(C_\si)$ and the hypothesis of
Lemma~\ref{le:pullback} holds.

The hypothesis that $\pi_A$ is regular nodal
implies that in a neighborhood of each point of the core $A_0$ of $\pi_A$ there
are coordinates $z_1,\ldots,z_\sk,u_1,\ldots$
on $A$ such that for each $\si$, $z_i$ together with the remaining
coordinates for the base coordinates of a nodal coordinate system.
In particular, $\pi_A(C_\si)=\{z_\si=0\}$.
The transversality hypothesis implies
that the functions $f^*z_\si$ are independent, i.e.
the sequence $f^*z_1,\ldots,f^*z_\sk$ extends to a coordinate system on $B$.
Now the proof of  Lemma~\ref{le:pullback} shows that 
for each $\si$ a reordering of these coordinates
which puts $f^*z_\si$ first is the base coordinate system of a nodal
coordinate system. The core $B_0$ is then defined by
$f^*z_1=\cdots f^*z_\sk=0$ which shows that $B_0=f^{-1}(A_0)$.
\end{proof}

\dfn\label{def:proper}
Let $(\pi_A:P\to A,R_*,H_A,a_0)$ and $(\pi_B:Q\to B,S_*,H_B,b_0)$
be two unfoldings of type 
$(\sg,\sn,\sd)$.
A sequence of fiber isomorphisms ${f_k:P_{a_k}\to Q_{b_k}}$ 
is said to \jdef{DMG converge} to a fiber isomorphism
$f_0:P_{a_0}\to Q_{b_0}$ if  ${a_k\to a_0}$, $b_k\to b_0$, and 
for every Hardy decomposition $P=P'\cup P''$ as in~\ref{Hardy} 
the sequence 
${f_k\circ\iota_{a_k}:\Gamma\to Q}$ converges to 
${f_0\circ\iota_{a_0}:\Gamma\to Q}$ in  the $\Cinf$ topology.
(DMG convergence of fiber isomorphisms is essentially 
the same as DM convergence in~\cite[Definition~13.7]{RS}.
The only difference is that in the former case we deal with unfoldings
of stable maps whereas in the latter case we deal with unfoldings
of marked nodal Riemann surfaces, i.e.~the two notions of fiber
isomorphism differ.)
\nfd

\begin{lemma}\label{le:proper}
Let $(\pi_A:P\to A,R_*,H_A,a_0)$ and $(\pi_B:Q\to B,S_*,H_B,b_0)$
be two universal unfoldings of type $(\sg,\sn,\sd)$,
$(\Phi,\phi):(P,A)\to(Q,B)$ be the germ of a morphism satisfying
$H_B\circ\Phi=H_A$, $\phi(a_0)=b_0$, and $\Phi_{a_0}=f_0$,
$a_k\in A$ and $b_k\in B$ be two sequences with
 $a_k\to a_0$ and $b_k\to b_0$,
and $f_k:P_{a_k}\to Q_{b_k}$ be a sequence of fiber isomorphisms.
Then the following are equivalent.
\begin{description}
\item[(i)]
The sequence $(a_k,f_k,b_k)$ DMG converges to $(a_0,f_0,b_0)$.
\item[(ii)]
For $k$ sufficiently large we have $\phi(a_k)=b_k$ and $\Phi_{a_k}=f_k$.
\end{description}
\end{lemma}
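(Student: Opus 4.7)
The direction (ii)$\Rightarrow$(i) is immediate: if $b_k=\phi(a_k)$ and $\Phi_{a_k}=f_k$ for $k$ large then $f_k\circ\iota_{a_k}=\Phi\circ\iota_{a_k}:\Gamma\to Q$ converges to $\Phi\circ\iota_{a_0}=f_0\circ\iota_{a_0}$ in $C^\infty$ by smoothness of $\Phi$ and of the trivialization $\iota$ in the $a$-variable. The content is the converse, and the plan for it is to realize both candidate matches as values of the unique holomorphic lift produced by the Fredholm theory of Section~\ref{sec:interfred}.

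For (i)$\Rightarrow$(ii), fix a Hardy decomposition $P=P'\cup P''$ and open sets $U'$, $U''$, $U=U'\cap U''\subset Q$ as in~\ref{UQ}--\ref{f=id}, arranged so that $f_0(P'_{a_0})\subset U'_{b_0}$ and $f_0(P''_{a_0})\subset U''_{b_0}$. Choose $s+1/2>4$ and form the Hilbert manifolds $\cU,\cU',\cU''$ of~\ref{cU} and $\cV,\cV',\cV''$ of~\ref{cV}. Because $(\pi_B,S_*,H_B,b_0)$ is infinitesimally universal (Theorem~\ref{thm:universal}), the smooth family
$$
h:\cU\to\cV,\qquad h(a,\alpha,b):=(a,H_B\circ\alpha),
$$
is, by Theorems~\ref{thm:U} and~\ref{thm:V} (see~\ref{infuniv}), an exact morphism of Fredholm triples at the point $(a_0,\alpha_0,b_0)$ with $\alpha_0:=f_0|\Gamma_{a_0}$; moreover the hypotheses of Theorem~\ref{thm:fredstable} in the complex category (Remark~\ref{rmk:sh}) are satisfied.

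The morphism $(\Phi,\phi)$ determines a holomorphic section
$$
\sigma:a\mapsto\bigl(a,\Phi_a|\Gamma_a,\phi(a)\bigr)
$$
of $\cU'\cap\cU''\to A$ lifting the holomorphic section $a\mapsto(a,H_A|\Gamma_a)$ of $\cV'\cap\cV''\to A$, with $\sigma(a_0)=(a_0,\alpha_0,b_0)$. On the other hand the fiber isomorphisms $f_k$ give points $p_k:=(a_k,f_k|\Gamma_{a_k},b_k)$ satisfying $h(p_k)=(a_k,H_A|\Gamma_{a_k})$; for $k$ large $p_k\in\cU'\cap\cU''$, since the winding-number and target-set conditions cutting out $\cU$, $\cU'$, $\cU''$ from~\ref{cU} are stable under small perturbations of a point of $\cU'\cap\cU''$. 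DMG convergence together with compactness of $\Gamma$ promotes $C^\infty$ convergence $f_k\circ\iota_{a_k}\to f_0\circ\iota_{a_0}$ into $H^s$ convergence for every $s$, whence $p_k\to(a_0,\alpha_0,b_0)$ in the Hilbert manifold~$\cU$. Both $\sigma(a_k)$ and $p_k$ therefore eventually lie in the finite-dimensional reduction provided by Theorem~\ref{thm:fredh}, on which $h$ restricts to a diffeomorphism, and they share the same $h$-image $(a_k,H_A|\Gamma_{a_k})$. Hence they coincide for $k$ large: $b_k=\phi(a_k)$ and $f_k|\Gamma_{a_k}=\Phi_{a_k}|\Gamma_{a_k}$. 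Lemma~\ref{le:fgamma} then upgrades the equality of boundary values to $f_k=\Phi_{a_k}$, proving~(ii).

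The main obstacle is verifying that the $C^\infty$ notion of DMG convergence correctly passes to convergence in the $H^s$-based Hilbert manifold topology on~$\cU$, which is what allows invocation of the local diffeomorphism property of the finite-dimensional reduction. This is routine because smooth convergence on the compact manifold $\Gamma$ entails $H^s$ convergence for every $s$, and the remaining constraints defining $\cU$ are open.
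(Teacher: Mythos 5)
Your proof is correct and takes essentially the same route as the paper: identify both $(a_k,f_k|\Gamma_{a_k},b_k)$ and $(a_k,\Phi_{a_k}|\Gamma_{a_k},\phi(a_k))$ as points of $\cU'\cap\cU''$ mapping to the same point of $\cV'\cap\cV''$ under $(a,\alpha,b)\mapsto(a,H_B\circ\alpha)$, observe that this map is an exact morphism of Fredholm quadruples by infinitesimal universality and Theorems~\ref{thm:U}, \ref{thm:V}, \ref{thm:fredstable}, and conclude by local injectivity of an exact morphism on the intersection $\cU'\cap\cU''$ near the basepoint; you invoke the finite-dimensional reduction of Theorem~\ref{thm:fredh} directly where the paper cites Corollary~\ref{cor:fredh}, which is an inessential variation.
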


\begin{proof}
That~(ii) implies~(i) is obvious. We prove that~(i) implies~(ii).
Recall the Hardy decomposition in the definition of
the spaces $\cU$, $\cU'$, $\cU''$ in~\ref{cU} and
$\cV$, $\cV'$, $\cV''$ in~\ref{cV}.  Then
$$
\bigl(a,\Phi_a|\Gamma_a,\phi(a)\bigr) \in \cU'\cap \cU'',\qquad
(a_k,f_k|\Gamma_{a_k},b_k)\in\cU'\cap \cU''
$$
for every $a\in A$ and every sufficiently large $k$,
by DMG convergence.  The sequences 
$(a_k,\Phi_{a_k}|\Gamma_{a_k},\phi(a_k))$
and $(a_k,f_k|\Gamma_{a_k},b_k)$ converge
to the same point $(a_0,f_0|\Gamma_{a_0},b_0)\in\cU'\cap\cU''$.
Moreover, their images under the Fredholm map
$$
\cU'\cap\cU''\to\cV'\cap\cV'':(a,\alpha,b)\mapsto(a,H_B\circ\alpha)
$$
agree because
$$
H_B\circ f_k = H_A|P_{a_k} = H_B\circ\Phi_{a_k}.
$$
Moreover it follows from infinitesimal universality and
Theorems~\ref{thm:U}, \ref{thm:V}, and~\ref{thm:fredstable}
that the map $(a,\alpha,b)\mapsto(a,H_B\circ\alpha)$ 
from $(\cU,\cU',\cU'',(a_0,f_0|\Gamma_0,b_0))$
to $(\cV,\cV',\cV'',(a_0,H_B\circ f_0|\Gamma_0))$
is an exact morphism of Fredholm quadruples 
(see~\ref{fredh}).  Hence 
$(f_k|\Gamma_{a_k},b_k)=(\Phi_{a_k}|\Gamma_{a_k},\phi(a_k))$
for $k$ sufficiently large, by Corollary~\ref{cor:fredh},
and hence also $f_k=\Phi_{a_k}$.
This proves the lemma.
\end{proof}

\begin{proof}[Proof of Theorem~\ref{thm:proper}.]
Let $(\pi:Q\to B,S_*,H)$ be a universal family and denote by 
$(B,\Gamma)$ the associated etale groupoid of~\ref{B-Gamma}.   
We prove that this groupoid is proper.
Thus let $(a_k,f_k,b_k)$ be a sequence in $\Gamma$ such that
$a_k$ converges to $a_0$ and $b_k$ converges to $b_0$.
We must show that there is a fiber isomorphism $f_0:Q_{a_0}\to Q_{b_0}$
such that a suitable subsequence of $f_k$ DMG converges to $f_0$.
To see this we assume first that 
the underlying marked nodal Riemann surface associated to 
a desingularization of $Q_{a_0}$ is stable.  Then the same holds
for $Q_{b_0}$ and we may assume w.l.o.g.~that our universal 
unfolding has the form~(\ref{eq:BQS}) as constructed in the proof
of Theorem~\ref{thm:exists} near $a_0$ and $b _0$.  
It then follows that $(a_k,f_k,b_k)$ induces a sequence 
$(a_k',f_k',b_k')$ of fiber isomorphisms for the underlying 
universal family $(\pi':Q'\to B',S'_*)$  of stable marked nodal 
Riemann surfaces such that $a_k'$ and $b_k'$ converge to 
$a_0'$ and $b_0'$, respectively. By~\cite[Theorem~6.6]{RS}, 
the sequence $f_k'$ DM-converges to a fiber isomorphism 
$f_0':Q'_{a'_0}\to Q'_{b'_0}$.  Since $H_B\circ f_k=H_B|Q_{a_k}$,
we find that $f_0'$ induces a fiber isomorphism
$f_0:Q_{a_0}\to Q_{b_0}$ and it follows from the definitions 
that $f_k$ DMG converges to $f_0$.  
This proves the assertion 
under the stability assumption for the underlying marked nodal
Riemann surface.  If that does not hold, we choose 
an embedding of our universal family into another 
family $(\pi':Q'\to B',S'_*,T'_*,H')$ that is a universal
unfolding of each of its fibers and remains stable after
discarding $H'$.  Then the existence of a
DMG-convergent subsequence follows immediately from 
what we have already proved. 
\end{proof}


\section{The Gromov topology}\label{sec:topology}

In this section we prove that the topology 
on the moduli space of (regular) stable maps that is induced 
by the orbifold structure agrees with the topology used elsewhere
in the literature. To define convergence of a sequence in this topology 
we need to recall the notion of {\em deformation} 
from~\cite[Definition~13.2]{RS}. 

\para\label{suture}
Let $\Sigma$ be a compact oriented surface
and $\gamma\subset\Sigma$ be a disjoint union of
embedded circles. We denote by $\Sigma_\gamma$
the compact surface with boundary which results
by \jdef{cutting open} $\Sigma$ along $\gamma$.
This implies that there  is  a local embedding 
$$
\sigma:\Sigma_\gamma\to\Sigma
$$
which
maps $\INT(\Sigma_\gamma)$ one to one onto $\Sigma\setminus\gamma$
and maps $\p \Sigma_\gamma$ two to one onto $\gamma$.
One might call $\sigma$ the {\em suture map} and $\gamma$ the {\em incision}.
\arap

\dfn \label{deformation}
Let   $(\Sigma',\nu')$ and $(\Sigma,\nu)$ be nodal surfaces.
A smooth map $\phi:\Sigma'\setminus\gamma'\to\Sigma$
is called a $(\nu',\nu)$-\jdef{deformation}
iff  $\gamma'\subset\Sigma'\setminus\bigcup\nu'$ is a disjoint union
of embedded circles such that (where $\sigma:\Sigma'_{\gamma'}\to\Sigma'$
is the suture map just defined) we have
\begin{itemize}
\item
$
\phi_*\nu':=\bigl\{  \{\phi(y'_1),\phi(y'_2)\}\,|\,
\{y'_1,y'_2\}\in\nu'\bigr\} \subset\nu.
$
\item
$\phi$ is a diffeomorphism from $\Sigma'\setminus \gamma'$
onto $\Sigma\setminus \gamma$, where
$\gamma:=\bigcup(\nu\setminus\phi_*\nu')$.
\item
$\phi\circ\sigma|\INT(\Sigma'_{\gamma'})$
extends to a continuous surjective map
$\Sigma'_{\gamma'}\to\Sigma$ such that
the preimage of each nodal point in $\gamma$
is a component of $\p\Sigma'_{\gamma'}$ and two boundary components
which map under $\sigma$ to the same component of $\gamma'$
map to a nodal pair $\{x,y\}\in\gamma$.
\end{itemize}
Each component of $\gamma'$ is called a {\em vanishing cycle} 
of the deformation $\phi$. A sequence 
$\phi_k:(\Sigma_k\setminus\gamma_k,\nu_k)\to(\Sigma,\nu)$
of $(\nu_k,\nu)$-deformations
is called \jdef{monotypic} if $(\phi_k)_*\nu_k$
is independent of $k$.
\nfd

\dfn\label{def:convergence} 
Let $M$ be a complex manifold. 
A sequence $(\Sigma_k,s_{k,*},\nu_k,j_k,v_k)$ of configurations
in $M$ of type 
$(\sg,\sn,\sd)$
is said to \jdef{converge monotypically} to 
a  configuration $(\Sigma,s_*,\nu,j,v)$ of type 
$(\sg,\sn,\sd)$
iff there is a monotypic sequence 
$\phi_k:\Sigma_k\setminus\gamma_k \to\Sigma\setminus\gamma$
of $(\nu_k,\nu)$-deformations satisfying the following conditions.
\begin{description}
\item[(Marked points)]
For $\si=1,\ldots,\sn$ the sequence $\phi_k(s_{k,\si})$
converges to $s_\si$ in $\Sigma$.
\item[(Complex structure)]
The sequence $(\phi_k)_*j_k$ of complex structures
on $\Sigma\setminus \gamma$ converges to 
$j|(\Sigma\setminus \gamma)$ in the $\Cinf$ topology.
\item[(Map)]
The sequence $(\phi_k)_*v_k:=v_k\circ\phi_k^{-1}$
converges to $v|(\Sigma\setminus \gamma)$ 
in the $\Cinf$ topology on $\Cinf(\Sigma\setminus\gamma,M)$.
\item[(Energy)]
For some (and hence every) pair of Riemannian metrics on 
$\Sigma$ and~$M$ we have 
$$
\lim_{\eps\to 0}\lim_{k\to\infty}\int_{B_\eps(\gamma)}\Abs{d(v_k\circ\phi_k^{-1})}^2
= 0,
$$
where $B_\eps(\gamma)\subset\Sigma$ denotes the $\eps$-neighborhood
of $\gamma\subset\cup\nu$. 
\end{description}
The sequence $(\Sigma_k,s_{k,*},\nu_k,j_k,v_k)$ is said to
\jdef{Gromov converge} to $(\Sigma,j,s,\nu,v)$ if,
after discarding finitely many terms, it is the disjoint union
of finitely many sequences which converge 
monotypically to  $(\Sigma,s,\nu,j,v)$.
\nfd

\begin{figure}[htp] 

\centering  

\includegraphics[scale=0.6]{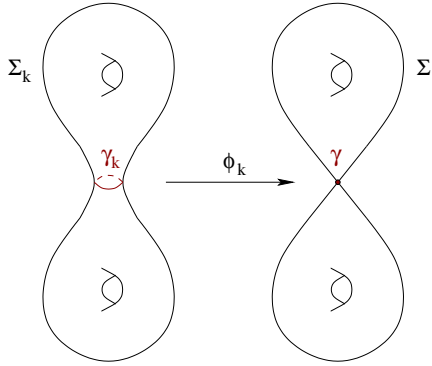} 

\caption{{Gromov convergence.}}\label{fig:gromov} 

\end{figure} 

\begin{theorem}\label{thm:gromovConvergence}
Let $(\Sigma,s_*,\nu,j,v)$ be a stable map, 
$(\pi:Q\to B,S_*,H,b_0)$ be a universal  unfolding, 
${u_0:\Sigma\to Q_{b_0}}$ be a desingularization with induced 
structures $s_*$, $\nu$, $j$, and $v$ on $\Sigma$, and
$(\Sigma_k,s_{k,*},\nu_k,j_k,v_k)$  be a sequence of stable maps.
Then the following are equivalent.
\begin{description}
\item[(i)] 
The sequence $(\Sigma_k,s_{k,*},\nu_k,j_k,v_k)$ 
Gromov converges to $(\Sigma,s_*,\nu,j,v)$.
\item[(ii)] 
After discarding finitely many terms, there exist $b_k\in B$ 
and desingularizations ${u_k:\Sigma_k\to Q_{b_k}}$ inducing
$s_{k,*}$, $\nu_k$, $j_k$,  $v_k$ such that $b_k$ converges to~$b_0$.
\end{description}
If~(i) holds with a sequence of deformations 
$\phi_k:\Sigma\setminus\gamma_k\to\Sigma$
then the sequence $u_k$ in~(ii) can be chosen such that 
$u_k(\gamma_k)$ converges to the nodal set in $Q_{b_0}$
and $u_k\circ\phi_k^{-1}:\Sigma\setminus\cup\nu$ converges
to $u_0|(\Sigma\setminus\cup\nu)$ in the $\Cinf$ topology. 
\end{theorem}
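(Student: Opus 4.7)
The plan is to reduce the theorem to the analogous statement for marked nodal Riemann surfaces proved in~\cite{RS} and then to lift that reduction through the construction of the universal unfolding of maps given in the proof of Theorem~\ref{thm:exists}.

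I would first dispatch the easier direction (ii)$\Rightarrow$(i). Assume $b_k\to b_0$ and fix nodal coordinates $(\xi_\si,\eta_\si,\tau_\si\circ\pi)$ on neighbourhoods $U'_\si$ of $C_B\cap Q_{b_0}$ as in~\ref{UQ}. For each $\si$ the parameter $\zeta_\si(b_k)$ either vanishes (the node persists in the limit) or converges to zero, in which case $Q_{b_k}$ contains an annulus $\{\xi_\si\eta_\si=\zeta_\si(b_k)\}$ that shrinks to the node. Pulling back an embedded circle from each such annulus through $u_k$ supplies the vanishing cycle $\gamma_k\subset\Sigma_k$, and combining the nodal coordinates with a smooth trivialization of $Q\setminus C_B$ near $Q_{b_0}$ yields a $(\nu_k,\nu)$-deformation $\phi_k:\Sigma_k\setminus\gamma_k\to\Sigma$. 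The marked-point, complex-structure and map conditions in Definition~\ref{def:convergence} are immediate from continuity of $H$ and of the family in~$b$; the energy condition follows from the standard fact that the Dirichlet energy of a holomorphic map on an annulus of conformal modulus going to infinity tends to zero.

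For the direction (i)$\Rightarrow$(ii), the plan is to first invoke the corresponding result for marked nodal Riemann surfaces from~\cite{RS} applied to the underlying sequence $(\Sigma_k,s_{k,*},\nu_k,j_k)$ and to the universal unfolding $(\pi_A:P\to A,R_*,a_0)$ that appears in the construction~(\ref{eq:BQS}). After discarding finitely many terms this yields $a_k\to a_0$ in $A$ together with desingularizations $w_k:\Sigma_k\to P_{a_k}$ such that $w_k\circ\phi_k^{-1}\to w_0$ in $\Cinf_\mathrm{loc}$ on $\Sigma\setminus\cup\nu$, where $w_0:\Sigma\to P_{a_0}$ is the desingularization inducing $s_*,\nu,j$. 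By Theorems~\ref{thm:exists} and~\ref{thm:universal} I may assume $B=\cV'\cap\cV''$ and that $Q$ is the family~(\ref{eq:BQS}), so $u_0(z)=(w_0(z),b_0)$ with $b_0=(a_0,\beta_0)$ and $\beta_0=v\circ w_0^{-1}|\Gamma_{a_0}$. Each $v_k$ extends holomorphically across the boundary $\Gamma_{a_k}$ of a Hardy decomposition of $P_{a_k}$, so the boundary values $\beta_k:=v_k\circ w_k^{-1}|\Gamma_{a_k}$ satisfy $(a_k,\beta_k)\in\cV'_{a_k}\cap\cV''_{a_k}$. I set $b_k:=(a_k,\beta_k)\in B$ and $u_k(z):=(w_k(z),b_k)$.

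The hard part is showing $b_k\to b_0$, which under the chart~(\ref{eq:Vtriv}) amounts to $\beta_k\circ\iota_{a_k}\to\beta_0\circ\iota_{a_0}$ in $H^s(\Gamma,M)$. Outside any fixed neighbourhood of the vanishing cycles this follows at once from the $\Cinf$ convergence of $(\phi_k)_*v_k$ together with the convergence of the Hardy trivializations $\iota_{a_k}\to\iota_{a_0}$ supplied by~\cite{RS}. The substantive work is in a shrinking annulus around each vanishing cycle: using the mean value inequality for $J$-holomorphic maps in a local chart on $M$ centred at the nodal value, the energy hypothesis in Definition~\ref{def:convergence} upgrades the $L^2$-decay of $|dv_k|$ to uniform $\Cinf$ control of $v_k$ on each side of the vanishing cycle close to the holomorphic extension of $v$ across the node, which yields the required $H^s$ convergence on all of~$\Gamma$. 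Once $b_k\to b_0$ is established, the two supplementary assertions are automatic: $u_k(\gamma_k)$ lies in a shrinking neighbourhood of $C_B\cap Q_{b_0}$ by construction of $\gamma_k$, and $u_k\circ\phi_k^{-1}=(w_k\circ\phi_k^{-1},b_k)\to u_0$ in $\Cinf_\mathrm{loc}$ on $\Sigma\setminus\cup\nu$.
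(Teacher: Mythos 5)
Your direction (ii)$\Rightarrow$(i) follows essentially the same path as the paper (and as \cite[Theorem~13.6]{RS}); you describe the construction of the $(\nu_k,\nu)$-deformation more from scratch and are more explicit about the energy axiom, but the content is the same.

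In direction (i)$\Rightarrow$(ii) there is a genuine gap, and interestingly you possess the right tool but aim it at the wrong target. After constructing $\beta_k := v_k\circ w_k^{-1}|\Gamma_{a_k}$ you write that ``each $v_k$ extends holomorphically across $\Gamma_{a_k}$, so $(a_k,\beta_k)\in\cV'_{a_k}\cap\cV''_{a_k}$.'' This does not follow from holomorphic extension alone. Recall from~\ref{cV} that $\beta\in\cV'_a$ requires the extension $h'$ to satisfy the \emph{image constraint} $h'(P'_{a,\si})\subset V'_\si$, and $\beta\in\cV''_a$ requires the extension $h''$ to lie in the prescribed open set $\cW_a$. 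The $\cV''$ condition comes for free from the $\Cinf$-convergence $h_k\circ\rho_{a_k}^{-1}\to h_0$ on $\Omega$, but the $\cV'$ condition is precisely where the Energy axiom and the standard compactness arguments for pseudoholomorphic curves must be invoked: without control on the energy in the neck region there is nothing to prevent $h_k$ from escaping the ball $V'_\si$. This is the step the paper flags explicitly.

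By contrast, the $H^s(\Gamma,M)$-convergence $\beta_k\circ\iota_{a_k}\to\beta_0\circ\iota_{a_0}$ which you call ``the hard part'' needs no such argument, because $\Gamma_{a_k}=\p P'_{a_k}$ sits at a \emph{fixed} distance from the nodes and hence entirely away from the vanishing cycles; the $\Cinf$-convergence of $v_k\circ\phi_k^{-1}$ on $\Sigma\setminus\gamma$ together with the smooth dependence of the trivialization on $a$ already gives it. Your dichotomy ``outside / inside a shrinking annulus around each vanishing cycle'' does not partition $\Gamma$, since $\Gamma$ lies wholly in the first region. So the mean-value-inequality argument you sketch should be redirected to establish the membership $\beta_k\in\cV'_{a_k}$ (i.e.\ that $h_k$ maps the neck into $V'_\si$), not the convergence of the boundary values.

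Finally, the reduction to the form~(\ref{eq:BQS}) tacitly assumes that the underlying marked nodal Riemann surface $(\Sigma,s_*,\nu,j)$ is stable. When it is not, the paper adds marked points to reduce to the stable case and then discards them via a morphism of universal unfoldings; your proposal omits this step.
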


\begin{proof} 
We prove~(ii) implies~(i). 
Let $u:\Sigma\to Q_{b_0}$ be a desingularization.
Assume that $b_k$ converges to $b$ and that
$u_k:\Sigma_k\to Q_{b_k}$ is a sequence of desingularizations
inducing $(s_{k,*}$, $\nu_k$, $j_k$,  $v_k$).
As in the proof of~\cite[Theorem~13.6]{RS}
there are maps $\psi_b:Q_b\to Q_{b_0}$ and deformations
$\phi_k:\Sigma_k\setminus \gamma_k\to\Sigma$
such that $\psi_b$ agrees with a smooth trivialization away from
the nodal set, $\psi_{b_0}$ is the identity,  and
$$
u\circ\phi_k = \psi_{b_k}\circ u_k:
\Sigma_k\setminus \gamma_k\to Q_{b_0}.
$$
Assume w.l.o.g. that the sequence $\phi_k$ is monotypic
so that there is a subset $\gamma\subset\cup\nu$ such
that $\phi_k:\Sigma_k\setminus \gamma_k\to\Sigma\setminus\gamma$
is a diffeomorphism.
As in~\cite{RS}  the sequence $\phi_k(s_{k,\si})$
converges to $s_\si$ in $\Sigma$ and
the sequence $(\phi_k)_*j_k$ of complex structures
on $\Sigma\setminus \gamma$
converges to $j|(\Sigma\setminus \gamma)$ in the $\Cinf$ topology.
Now
$
\psi_{b_k}^{-1}\circ u_0=  u_k\circ\phi_k^{-1}
$
so
$$
H\circ\psi_{b_k}^{-1}\circ u_0
=  H\circ u_k\circ\phi_k^{-1}=v_k\circ\phi_k^{-1}.
$$
Since $\psi_{b_0}$ is the identity the left hand side
(and hence also $(\phi_k)_*v_k=v_k\circ\phi_k^{-1}$) 
converges to $v_0|(\Sigma_0\setminus \gamma)$ 
in the $\Cinf$ topology on $\Cinf(\Sigma\setminus\gamma,M)$.

We prove~(i) implies~(ii) under the additional hypothesis that
the marked nodal Riemann surface $(\Sigma,s_*,\nu,j)$
is stable. By the uniqueness of universal unfoldings
we may asssume that $(\pi,S_*,H,b_0)$ is given by~(\ref{eq:BQS}).
By assumption, the sequence $(\Sigma_k,s_{k,*},\nu_k,j_k)$ 
obtained by discarding the maps $v_k$ consists 
of stable marked nodal Riemann surfaces and it 
DM-converges to $(\Sigma,s_*,\nu,j)$ as in~\cite[Definition~13.3]{RS}.
Hence Theorem~13.6 in~\cite{RS} asserts that there exists
a sequence $a_k\in A$ converging to $a_0$ and, 
for sufficiently large $k$, desingularizations
$w_k:\Sigma_k\to P_{a_k}$ inducing the structures
$s_{k,*}$, $\nu_k$, $j_k$ on $\Sigma_k$. 
By~\cite[Remark~13.9]{RS}, the desingularizations 
$w_k$ can be chosen such that the sequence
$$
w_k\circ\phi_k^{-1}:\Sigma\setminus\cup\nu\to P
$$
converges to $w_0$ in the $\Cinf$ topology. 
Define ${h_k:P_{a_k}\to M}$ 
and ${h_0:P_{a_0}\to M}$~by
$$
h_k\circ w_k:=v_k,\qquad h_0\circ w_0:=v_0.
$$
Since $w_k\circ\phi_k^{-1}$ converges to $w_0$,
the sequence $\phi_k\circ w_k^{-1}\circ\rho_{a_k}^{-1}$ 
(with $\rho$ as in~\ref{hardyTrivialization})
converges to $w_0^{-1}$ in the $\Cinf$ topology 
on $\Omega=P_{a_0}''$. This implies that the sequence
$$
h_k\circ\rho_{a_k}^{-1} = (v_k\circ\phi_k^{-1})
\circ(\phi_k\circ w_k^{-1}\circ\rho_{a_k}^{-1})
$$
converges to $v_0\circ w_0^{-1}=h_0$ in the 
$\Cinf$ topology on $\Omega$.  
By definition of $\cV''$, this implies
$$
b_k := (a_k,\beta_k)\in\cV'\cap\cV''=B,\qquad
\beta_k:=h_k|\Gamma_{a_k}\in\cV_{a_k}''
$$
for $k$ sufficiently large.  Here we have also used the fact 
that $h_k|P'_{a_k}$ takes values in $\cV'$ for large $k$, by 
the {\it (Energy)} axiom and the standard compactness arguments 
for pseudoholomorphic curves (see~\cite[Chapter~4]{MS}).  
Since $a_k$ converges to $a_0$ and 
$\beta_k\circ\rho_{a_k}^{-1}|\p\Omega$ converges
to $\beta_0:=h_0|\Gamma_{a_0}=h_0|\p\Omega$,
we deduce that $b_k$ converges to $b_0:=(a_0,\beta_0)$. 
Thus we have proved that~(i) implies~(ii) under the assumption 
that the marked nodal Riemann surface $(\Sigma,s_*,\nu,j)$
is stable.

We prove~(i) implies~(ii) in general.
Suppose the sequence $(\Sigma_k,s_{k,*},\nu_k,j_k,v_k)$ 
Gromov converges to $(\Sigma,s_*,\nu,j,v)$ and the underlying
marked nodal Riemann surface $(\Sigma,s_*,\nu,j)$ is not 
stable.  Then we can add marked points to $\Sigma_k$
and $\Sigma$ such that the resulting sequence still 
Gromov converges and the augmented marked nodal Riemann
surface $(\Sigma,s_*,t_*,\nu,j)$ is stable. By what we have 
already proved, the augmented sequence 
$(\Sigma_k,s_{k,*},t_{k,*},\nu_k,j_k,v_k)$ satisfies~(ii). 
Let $(\pi_A:P\to A,R_*,T_*,H_A,a_0)$ be a universal
unfolding of the augmented stable map. Removing the 
additional sections $T_*$ results in an unfolding that is no longer 
universal but, by definition of universal, admits a morphism to 
$(\pi:Q\to B,S_*,H,b_0)$. Hence the original sequence 
$(\Sigma_k,s_{k,*},\nu_k,j_k,v_k)$ also satisfies~(ii).
This proves the theorem. 
\end{proof}


\section{Concluding remarks}

It would be interesting to know to what extent the techniques developed
in this paper extend to the nonintegrable case. Since the linearized
Cauchy--Riemann operators $D_v$ are not complex linear in this case
the resulting moduli space will at best be a smooth (not a complex)
orbifold.  In the definition of a universal unfolding we can 
at most expect the existence of a smooth morphism 
${(\Phi,\phi):\pi_A\to\pi_B}$.  An analogue of the universal unfolding
theorem (Theorem~\ref{thm:universal}) for the nonintegrable case
will depend on an answer to the following question. 

Given an almost complex structure $J$ on $\R^{2\sm}$
and a complex number $z\in\INT(\D)$ define the set
$$
\cN_z:= \left\{(\xi,\eta,z)\in 
H^s(S^1,\R^{2\sm})^2\,\Bigg|\,
\begin{array}{l}
\exists\mbox{ a $J$-holomorphic map }  \\
v:N_z\to\R^{2\sm}\mbox{ in } H^{s+1/2} \\
\mbox{s.t. }\xi=v\circ\iota_{1,z},\; 
\eta=v\circ\iota_{2,z}
\end{array}
\right\}
$$
where $N_z$ is as in~\ref{standardNode}. 
It is easy to prove that this set is a smooth submanifold 
of $H^s(S^1,\R^{2\sm})\times H^s(S^1,\R^{2\sm})$ 
for every $z$.  A natural question to ask is if the disjoint union
$$
\cN := \bigcup_{z\in\INT(\D)}\{z\}\times\cN_z
$$
is a smooth submanifold of 
$\INT(\D)\times H^s(S^1,\R^{2\sm})\times H^s(S^1,\R^{2\sm})$.
In Lemma~\ref{le:localmodel} this was proved in the integrable 
case. However, we have examples of finite dimensional analogues 
where this fails.  On the other hand, we expect that the Hadamard
proof of the unstable manifold theorem carries over
to the infinite dimensional setting and shows that the $\cN_z$
form a continuous family of smooth submanifolds. This would
give an alternative approach to the gluing theorem for 
pseudoholomorphic curves.  Moreover, one could then carry 
over the techniques of this paper to prove that, in the 
almost complex case, the regular stable maps form a $C^0$ orbifold.


\end{document}